\numberwithin{equation}{section}
\newtheorem{thr}{Theorem}[section]
\theoremstyle{definition}
\newtheorem{defn}[thr]{Definition}
\theoremstyle{remark}
\newtheorem*{rem}{Remark}
\theoremstyle{plain}
\newtheorem{lem}[thr]{Lemma}
\theoremstyle{plain}
\newtheorem{prop}[thr]{Proposition}
\theoremstyle{plain}
\newtheorem{cor}[thr]{Corollary}
\newcommand{\tab}{\mathrm{t}}
\DeclareMathOperator{\CSTD}{\mathrm{CSTD}}
\DeclareMathOperator{\RSTD}{\mathrm{RSTD}}
\DeclareMathOperator{\STD}{\mathrm{STD}}
\DeclareMathOperator{\Tab}{\mathrm{Tab}}
\DeclareMathOperator{\SD}{\mathrm{SD}}
\DeclareMathOperator{\SA}{\mathrm{SA}}
\DeclareMathOperator{\WD}{\mathrm{WD}}
\DeclareMathOperator{\WA}{\mathrm{WA}}
\DeclareMathOperator{\D}{\mathrm{D}}
\DeclareMathOperator{\A}{\mathrm{A}}
\DeclareMathOperator{\Pos}{\mathrm{Pos}}
\newcommand{\row}{\mathrm{row}}
\newcommand{\col}{\mathrm{col}}
\newcommand{\bx}{\mathrm{box}}
\theoremstyle{remark}
\begin{document}

\title[\unboldmath $W\!$-\,graph ideals]{\boldmath\(W\!\)-\,graph ideals}\label{Pre}

\author{\emph{\textbf{Robert B. Howlett and Van Minh Nguyen}}}

\begin{abstract}
We introduce a concept of a \(W\!\)-graph ideal in a Coxeter
group. The main goal of this paper is to describe how to construct
a \(W\!\)-graph from a given \(W\!\)-graph ideal. The
principal application of this idea is in type \(A\), where it
provides an algorithm for the construction of \(W\!\)-graphs for
Specht modules.
\end{abstract}

\maketitle

\section{Introduction}

Let \((W,S)\) be a Coxeter system and \(\mathcal{H}(W)\) its Hecke
algebra over \(\mathbb{Z}[q,q^{-1}]\), the ring of Laurent
polynomials in the indeterminate~\(q\). There are certain
representations of \(\mathcal{H}(W)\) whose structure can be
encoded by combinatorial objects called \(W\!\)-graphs, introduced
by Kazhdan and Lusztig in \cite{kazlus:coxhecke}. A \(W\!\)-graph
provides a compact way of providing all the information needed to
construct the representation. Moreover, from the work of Gyoja,
\cite{gyo:existWG}, it is known that if \(W\) is a finite Weyl group
then all irreducible \(\mathcal{H}(W)\)-modules can be realized as
modules carried by \(W\!\)-graphs. However, the problem of
explicitly describing these \(W\!\)-graphs is not completely
solved.

In  \cite{kazlus:coxhecke} Kazhdan and Lusztig constructed a
special basis for \(\mathcal{H}(W)\), using a family of
polynomials in \(q\) with integer coefficients. These polynomials,
now known as the \textit{Kazhdan-Lusztig polynomials}, are
parametrized by pairs of elements of~\(W\! \), and are defined by
a recursive procedure. The Kazhdan-Lusztig basis gives the regular
representation of \(\mathcal{H}(W)\) a \(W\!\)-graph structure.
Moreover, Kazhdan and Lusztig showed that \(W\!\)-graphs may be
split into cells, which are themselves \(W\!\)-graphs, thus
potentially providing a means of decomposing the regular
representation. In type \(A\) the cells in the regular
\(W\!\)-graph yield irreducible representations; however,
constructing \(W\!\)-graphs for the irreducible representations
has to date been computationally challenging because of the large
number of Kazhdan-Lusztig polynomials that must be calculated.

In \cite{deo:paraKL} Deodhar gave a generalization of the
Kazhdan-Lusztig construction, using \textit{parabolic
Kazhdan-Lusztig polynomials\/} relative to a standard parabolic
subgroup \(W'\!\) to give \(W\!\)-graph structures to
\(\mathcal{H}(W)\)-modules induced from certain one-dimensional
\(\mathcal{H}(W')\)-modules. This raises the question whether
\(W\!\)-graphs for other classes of representations may be
constructed similarly, and to do so is one of the main objectives
of our project. We introduce the concept of a
\textit{\(W\!\)-graph ideal\/} in \((W,\le_L)\) (where \(\le_L\)
is the the partial order such that \(u\le_L v\) if and 
only if \(l(vu^{-1})=l(v)-l(u)\)) and give a
Kazhdan-Lusztig like algorithm to produce, for any such
ideal~\(\mathscr{I}\!\), a \(W\!\)-graph with vertices indexed by
the elements of \(\mathscr{I}\!\).

Our main focus is on \(\mathcal{H}(W_{n})\), the Hecke algebra of
type \(A_{n-1}\). Of course in this case the Weyl group, \(W_n\),
is isomorphic to the symmetric group of degree~\(n\), and its
representation theory (and that of \(\mathcal{H}(W_{n})\)) is
deeply connected with the combinatorics of tableaux. The
irreducibles are parametrized by partitions of \(n\), and for each
partition the corresponding \textit{Specht module\/} has basis in
one-to-one correspondence with the standard tableaux of that
shape. Kazhdan and Lusztig showed in \cite{kazlus:coxhecke} that
for each cell of the Kazhdan-Lusztig \(W\!\)-graph for the left
regular representation of \(\mathcal{H}(W_{n})\), the
Robinson-Schensted algorithm provides a one-to-one correspondence
between the elements of \(W_n\) in the cell and pairs of standard
tableaux with a fixed first term. In \cite{dipjam:heckA} Dipper
and James gave a combinatorial construction of Specht modules.
Attempts have been made to find direct combinatorial constructions
of the \(W\!\)-graphs carried by the cells, but only partial
results have been obtained.

The unpublished draft paper \cite{how:wgraphA} presented a
Kazhdan-Lusztig like algorithm for computing \(W\!\)-graphs for
Specht modules, but the algorithm's correctness was not proved.
The PhD thesis \cite{nguyen:thesis} contains a proof that the
algorithm is indeed correct, and, moreover, can be generalized to
include the construction of \(W\!\)-graphs for modules associated
with skew partitions, as well as Specht modules. The details of
this will be published in another paper. The key fact is that the
set of standard tableaux corresponding to a (skew) partition of
\(n\) is in one-to-one correspondence with an
ideal~\(\mathscr{I}\) in \((W,\le_L)\), and it is shown
that~\(\mathscr{I}\) is a \(W\!\)-graph ideal.

The present paper is organised as follows. The next three sections
present basic definitions and facts concerning Coxeter groups,
Hecke algebras and \(W\!\)-graphs. The notion of a \(W\!\)-graph
ideal is introduced in Section~5, and in Section~6 we present an
illustrative example, constructing a \(W\!\)-graph basis for a
specific Specht module. In Section~7 we prove in general that a
\(W\!\)-graph can be constructed from a \(W\!\)-graph ideal by a
recursive procedure similar to the original Kazhdan-Lusztig
construction, and then in Section~8 we relate our results to the
constructions of Kazhdan-Lusztig and Deohdar.
Finally, in Section~9, we give an alternative construction of a
\(W\!\)-graph induced from the \(W\!\)-graph associated with a
\(W\!\)-graph ideal.

\section{Coxeter groups}\label{Cox}

Let \((W, S)\) be a Coxeter system. Thus \(W\)
is a group generated by a set \(S\) subject to defining relations
of the form
\begin{equation*}
(ss')^{m(s,s')} = 1\quad\text{for all \(s,\,s'\in S\)}
\end{equation*}
where \(m(s,s')=m(s',s)\) is a positive integer or \(\infty\)
and \(m(s,s') = 1\) if and only if \(s = s'\). (A relation
\((ss')^\infty = 1\) is regarded as vacuously true.) Elements of
\(S\) are called \textit{simple reflections}, and the cardinality
of \(S\) is called the \textit{rank\/} of the system. It turns out
that in all cases that \(m(s,s')\) equals the order of \(ss'\) in~\(W\).

Let \(l\) be the length function defined on \(W\); that is, if
\(w\in W\) then \(l(w)\) is the minimal \(k\) such that
\(w=s_{1}s_{2}\cdots s_{k}\) for some elements \(s_1, s_{2},
\ldots, s_{k} \in S\). If \(w=s_{1}s_{2}\cdots s_{k}\) and
\(l(w)=k\), then \(s_{1}s_{2}\cdots s_{k}\) is said to be a
\textit{reduced expression\/} for~\(w\). If \(W\) is finite
then there is a unique longest element in \(W\); we shall denote
it by \(w_S\).

Define \( T = \{\, w^{-1}sw \mid s \in S,\, w \in W\,\}\) (the set
of reflections in \(W\)). The following partial orders are defined
on \(W\).
\begin{defn}[Bruhat order]\label{bruhatorder}
The Bruhat order \(\leq\) is the transitive closure of the
relation \(\buildrel {\scriptscriptstyle T}\over\rightarrow\) 
given by \(u \buildrel{\scriptscriptstyle T}\over \rightarrow w\)
if \(l(u) \leq l(w)\) and \(w = tu\) for some \(t \in
T\cup\{1\}\).
\end{defn}
\begin{defn}[Weak order]\label{leftorder}
The left weak order \(\leq_{L}\) is the transitive closure
of the relation \(\buildrel {\scriptscriptstyle S} \over
\longrightarrow\) given by \(u \buildrel {\scriptscriptstyle S}
\over \longrightarrow w\) if \(l(u) \leq l(w)\) and \(w = su\) for
some \(s \in S\cup\{1\}\). If \(u \leq_{L} w\), we say that \(u\)
is a \textit{suffix\/} of \(w\).
\end{defn}
Observe that \(u \leq_{L} w\) implies \(u \leq w\). It is well
known that if \(W\) is finite then \(u\le_L w_S\) for all \(u\in W\),
where \(w_S\) is the maximal length element of~\(W\).

We shall employ the customary conventions that \(w\ge u\) means
the same thing as \(u\le w\) and that \(u<w\) means \(u\le w\) and
\(u\ne w\), and so forth.

The following property of the Bruhat order is standard (see
\cite[Section 7.4]{hum:coxetreflection}).
\begin{lem}\label{lifting1}
Let \(s \in S\) and \(u,\,w\in W\) satisfy \(u < su\) and \(w <
sw\). Then \(u\le w\) if and only if \(u\le sw\), and \(u\le sw\)
if and only if \(su\le sw\).
\end{lem}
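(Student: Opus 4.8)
The plan is to deduce the statement from the subword characterisation of the Bruhat order: $u\le w$ if and only if some (equivalently, every) reduced expression for $w$ admits a subexpression that is a reduced expression for $u$. I will combine this with the elementary length facts that $u<su$ forces $l(su)=l(u)+1$ and $w<sw$ forces $l(sw)=l(w)+1$, so that if $w=s_1\cdots s_m$ is reduced (with $m=l(w)$) then $ss_1\cdots s_m$ is reduced and hence a reduced expression for $sw$. Concretely I will establish the cyclic chain of implications
\[
su\le sw\ \Longrightarrow\ u\le sw\ \Longrightarrow\ u\le w\ \Longrightarrow\ su\le sw,
\]
which shows that the three conditions $su\le sw$, $u\le sw$, $u\le w$ are pairwise equivalent; the two biconditionals asserted in the lemma are then immediate.

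Two of the three implications are essentially formal. Since $u<su$, transitivity of $\le$ gives $su\le sw\Rightarrow u\le sw$. For $u\le w\Rightarrow su\le sw$, fix a reduced expression $w=s_1\cdots s_m$ and choose indices $i_1<\cdots<i_j$ (with $j=l(u)$) so that $s_{i_1}\cdots s_{i_j}$ is a reduced expression for $u$; then $s\,s_{i_1}\cdots s_{i_j}$ is a subexpression of the reduced expression $ss_1\cdots s_m$ for $sw$, it equals $su$, and its length is $j+1=l(u)+1=l(su)$, so it is reduced, whence $su\le sw$ by the subword property.

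The implication carrying the content is $u\le sw\Rightarrow u\le w$. Keeping the reduced expression $sw=ss_1\cdots s_m$, choose a subexpression of it that is a reduced expression for $u$. If this subexpression does not involve the leading letter $s$, then it is a subexpression of $s_1\cdots s_m$, whence $u\le w$. Otherwise it has the form $s\,s_{i_1}\cdots s_{i_j}$ with $1\le i_1<\cdots<i_j\le m$ and $1+j=l(u)$; multiplying on the left by $s$ gives $su=s_{i_1}\cdots s_{i_j}$, an expression of length $j=l(u)-1$, so $l(su)\le l(u)-1<l(u)$, contradicting $u<su$. Thus the second case cannot occur and $u\le w$ holds in every case.

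There is no serious obstacle: everything reduces to the subword property of the Bruhat order together with the fact that left multiplication by a simple reflection changes length by exactly one. The only points needing mild care are the interchange of ``some'' and ``every'' reduced expression in the subword criterion (both versions being part of the standard statement invoked above) and the length bookkeeping in the final case analysis. Assembling the three implications yields $u\le w\iff u\le sw$ and $u\le sw\iff su\le sw$, as required.
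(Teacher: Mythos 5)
Your proof is correct. Note first that the paper itself does not prove this lemma at all: it is quoted as a standard property of the Bruhat order with a citation to Humphreys (Section 7.4), where it is established as part of the basic ``lifting'' machinery used in Kazhdan--Lusztig theory. Your argument is therefore a genuinely self-contained alternative, resting on the subword characterisation of the Bruhat order plus the fact that $u<su$ and $w<sw$ force $l(su)=l(u)+1$ and $l(sw)=l(w)+1$, so that prefixing $s$ to a reduced word for $w$ gives a reduced word for $sw$. The three implications are all sound: $su\le sw\Rightarrow u\le sw$ by transitivity through $u<su$; $u\le w\Rightarrow su\le sw$ by prefixing $s$ to a reduced subword for $u$ inside $s s_1\cdots s_m$ and checking the length count $l(u)+1=l(su)$ so the new subword is reduced; and $u\le sw\Rightarrow u\le w$ by the case analysis on whether the chosen reduced subword for $u$ uses the leading $s$, the offending case being killed by the length contradiction with $u<su$. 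The cyclic chain then yields both biconditionals. The only thing your write-up leans on is the ``every reduced expression'' form of the subword property (needed in the step $u\le w\Rightarrow su\le sw$, where the reduced expression of $w$ is fixed in advance), and you correctly flag that this is part of the standard statement. Compared with the textbook route via the exchange condition and the covering description of Bruhat order, your argument buys a short, hands-on derivation at the cost of invoking the (nontrivial, but standard) subword theorem.
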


Let \(J\) be an arbitrary subset of \(S\) and \(W_{J}\) the
subgroup of \(W\) generated by~\(J\); such subgroups are called
\textit{standard parabolic subgroups\/} of \(W\). It can be shown
that \((W_{J}, J)\) is a Coxeter system. The length function on
\(W_{J}\) relative to the generating set \(J\) coincides with the
restriction of the length function on \(W\) (see \cite[Section
5.5]{hum:coxetreflection}), and the Bruhat and weak orders
on \(W_J\) are the restrictions of the corresponding orders on
\(W\) (see \cite[Section 5.10]{hum:coxetreflection}). Each
left coset of \(W_{J}\) in \(W\) contains a unique element
of \(D_{J} = \{\,w \in W \mid l(ws) > l(w) \text{ for all }
s \in J\,\}\), and \(l(du) = l(d) +l(u)\) for all \(u \in W_{J}\)
and \(d\in D_J\). The set \(D_{J}\) is called the set of
distinguished (or minimal) left coset representatives in \(W\)
for the subgroup \(W_{J}\) (see \cite[Section 1.10]{hum:coxetreflection}).
If \(W_J\) is finite then we denote the longest element of \(W_{J}\)
by~\(w_{J}\). If \(W\) is finite then we let \(d_{J}\) be
the unique element in \(D_{J} \cap w_{S}W_{J}\); then \(D_{J} =
\{\,w \in W \mid w \leq_{L} d_{J}\,\}\) (see \cite[Lemma
2.2.1]{gecpfei:charHecke}).

\begin{lem}\cite[Lemma 2.1 (iii)]{deo:paraKL}\label{deo1}
Let \(J \subseteq S\). For each \(s \in S\) and each \(w \in
D_{J}\), exactly one of the following occurs:
\begin{itemize}
\item[(i)] \(l(sw) < l(w)\) and \(sw \in D_{J}\); \item[(ii)]
\(l(sw) > l(w)\) and \(sw \in D_{J}\); \item[(iii)] \(l(sw) >
l(w)\) and \(sw \notin D_{J}\), and \(w^{-1}sw \in J\).
\end{itemize}
\end{lem}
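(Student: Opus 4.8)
The plan is to argue by a case distinction on the sign of $l(sw)-l(w)$; since $s$ is a simple reflection this difference is $\pm 1$, so the three alternatives are manifestly mutually exclusive (statements (ii) and (iii) presuppose $l(sw)>l(w)$, which rules out (i), and they are separated from each other by whether or not $sw\in D_J$) and jointly exhaustive. The content is therefore to verify, in each configuration, that the asserted extra conclusion does hold.

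First I would dispose of the case $l(sw)<l(w)$ by showing that $sw\in D_J$, which is exactly~(i). Suppose not; then there is $t\in J$ with $l(swt)<l(sw)$, whence $l(swt)\le l(w)-2$. Since $wt=s(swt)$ we get $l(wt)\le l(swt)+1\le l(w)-1<l(w)$, contradicting $w\in D_J$. This argument is elementary.

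The substantial case is $l(sw)>l(w)$ together with $sw\notin D_J$, where one must produce a simple reflection of $J$ equal to $w^{-1}sw$. Pick $t\in J$ with $l(swt)<l(sw)$; then $l(swt)=l(w)$, while $l(wt)=l(w)+1$ since $w\in D_J$ and $t\in J$. Because $s(wt)=swt$ has length $l(wt)-1$, the defining relation of the Bruhat order (Definition~\ref{bruhatorder}) gives $swt<wt$. I then apply Lemma~\ref{lifting1} to $w$ and $swt$ in the roles of its $u$ and $w$: the hypotheses $w<sw$ and $swt<s(swt)$ hold (and $s(swt)=wt$), and the lemma's second equivalence says that $w\le s(swt)$ iff $sw\le s(swt)$; since $w\le wt=s(swt)$, this yields $sw\le wt$. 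As $l(sw)=l(w)+1=l(wt)$ and $x<y$ in the Bruhat order forces $l(x)<l(y)$, we conclude $sw=wt$, so $w^{-1}sw=t\in J$, as wanted. The remaining configuration, $l(sw)>l(w)$ with $sw\in D_J$, is literally~(ii) and needs no argument.

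The point I expect to be most delicate is the middle of the hard case: confirming $swt<wt$ so that Lemma~\ref{lifting1} applies, and then selecting the instance of that lemma which delivers the useful inequality $sw\le wt$ rather than a vacuous one. A workable alternative there is the strong exchange condition applied to the reduced expression for $sw$ obtained by prefixing $s$ to a reduced word for $w$: it exhibits $swt$ as that word with one letter deleted, and a short length count rules out every deletion except that of the leading $s$, again giving $sw=wt$. I would use whichever route reads more cleanly in context.
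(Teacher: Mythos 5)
Your argument is correct. Note, however, that the paper does not prove Lemma~\ref{deo1} at all: it is quoted verbatim from Deodhar \cite{deo:paraKL}, so there is no in-paper proof to compare with; what you have supplied is a self-contained proof of the cited fact. Both of your two cases check out. For $l(sw)<l(w)$, the observation that a $t\in J$ with $l(swt)<l(sw)$ would force $l(wt)\le l(swt)+1\le l(w)-1$, contradicting $w\in D_J$, is exactly right. In the main case, the chain $swt<wt$, then Lemma~\ref{lifting1} applied with $u=w$ and $swt$ in the role of the lemma's $w$ (so that the second equivalence reads $w\le wt$ iff $sw\le wt$), then the length count $l(sw)=l(w)+1=l(wt)$ forcing $sw=wt$, is a complete and standard derivation; your alternative via the exchange condition applied to the reduced expression $s\cdot(\text{reduced word for }w)$ is if anything slightly cleaner, since deleting any letter other than the leading $s$ would produce $wt$ of length less than $l(w)$, again contradicting $w\in D_J$. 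Either route is fine; the only cosmetic caution is the double use of the letter $w$ (your $w\in D_J$ versus the $w$ of Lemma~\ref{lifting1}), which you should rename when writing it up to avoid confusion.
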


We shall find it convenient to make use of the following
definition.

\begin{defn}\label{Pos}
If \(X\subseteq W\), let
\(\Pos(X)=\{\,s\in S\mid l(xs)>l(x)\text{ for all }x\in X\,\}\).
\end{defn}

Thus \(\Pos(X)\) is the largest subset \(J\) of \(S\) such that
\(X\subseteq D_J\).

\section{Hecke algebras}

Let \(\mathcal{A} = \mathbb{Z}[q, q^{-1}]\), the ring of Laurent
polynomials with integer coefficients in the indeterminate \(q\),
and let \(\mathcal{A}^{+} = \mathbb{Z}[q]\). Let \((W, S)\) be a
Coxeter system. Then the corresponding Hecke algebra, denoted \(
\mathcal{H}(W) \), is the associative algebra over \(\mathcal{A}\)
generated by the elements \(\{T_{s} \mid s \in S \}\) subject to
the following defining relations:
\begin{align*}
T^{2}_{s}        &= 1 + (q -q^{-1})T_{s} \quad \text{for all \(s \in S\)},\\
T_{s}T_{s'}T_{s}\cdots &= T_{s'}T_{s}T_{s'}\cdots    \quad
\text{for all \(s, s' \in S\)},
\end{align*}
where in the second of these there are \(m(s,s')\) factors on each
side, \(m(s,s')\) being the order of \(ss'\) in~\(W\). We remark
that the traditional definition has \(T^{2}_{s}=q + (q -1)T_{s}\)
in place of the first relation above; our version is obtained by
replacing \(q\) by \(q^2\) and dividing the generators
by~\(q\).

It is well known that \(\mathcal{H}(W)\) is \(\mathcal{A}\)-free
with an \(\mathcal{A}\)-basis \((\, T_{w}\mid w \in W\,)\) and
multiplication satisfying
\begin{equation*}
\postdisplaypenalty=10000
T_{s}T_{w} = \begin{cases} T_{sw} & \text{if \(l(sw) > l(w)\),}\\
                            T_{sw} + (q - q^{-1})T_{w} & \text{if \(l(sw) < l(w)\).}
              \end{cases}
\end{equation*}
for all \(s\in S\) and \(w\in W\).

If \(J\subseteq S\) then \(\mathcal{H}(W_{J})\), the Hecke algebra
associated with the Coxeter system \((W_{J},J)\), is isomorphic to
the subalgebra of \(\mathcal{H}(W)\) generated by \(\{\,T_{s} \mid
s \in J\,\}\). We shall identify \(\mathcal{H}(W_{J})\) with this
subalgebra.

\section{\(W\!\)-graphs}

Let \(\mathcal{H}=\mathcal{H}(W)\) be the Hecke algebra associated
with the Coxeter system \((W,S)\). Let \(a \mapsto \overline{a}\)
be the involutory automorphism of
\(\mathcal{A}=\mathbb{Z}[q,q^{-1}]\) defined by \(\overline{q}
\mapsto q^{-1}\). This extends to an involution on \(\mathcal{H}\)
satisfying
\begin{equation*}
\overline{T_{s}} = T_{s}^{-1} = T_{s} - (q - q^{-1}) \quad
\text{for all $s \in S$}.
\end{equation*}

A \(W\!\)-graph is a triple \((V, \mu, \tau)\) consisting of a set
\(V\!\), a function \(\mu\colon V \times V \rightarrow
\mathbb{Z}\) and a function \(\tau\) from \(V\) to the power set
of \(S\), subject to the requirement that the free
\(\mathcal{A}\)-module with basis \(V\) admits an
\(\mathcal{H}\)-module structure satisfying
\begin{equation}\label{wgraphdef}
     T_{s}v = \begin{cases}
              -q^{-1}v \quad &\text{if $s \in \tau(v)$}\\
              qv + \sum_{\{u \in V \mid s \in \tau(u)\}}\mu(u,v)u
              \quad &\text{if $s \notin \tau(v)$},
     \end{cases}
\end{equation}
for all \(s \in S\) and \(v \in V\!\).

The set \(V\) is called the vertex set of the \(W\!\)-graph, and
there is a directed edge from a vertex \(v\) to \(u\) if and only
if \(\mu(u,v) \neq 0\). We may regard the integer \(\mu(u,v)\) as
the weight of the edge from \(v\) to \(u\), and the set
\(\tau(v)\) as the colour of the vertex~\(v\).

Since  the \(\mathcal{H}\)-module \(\mathcal{A}V\) is
\(\mathcal{A}\)-free it admits a unique \(\mathcal{A}\)-semilinear
involution \(\alpha \mapsto \overline{\alpha}\) such that
\(\overline v=v\) for all elements \(v\) of the basis~\(V\). It
follows from \eqref{wgraphdef} that for all \(s\in S\) and \(v\in
V\!\),
\begin{align*}
 \overline{T_{s}v} &= \begin{cases}
              -qv \quad &\text{if $s \in \tau(v)$}\\
              q^{-1}v + \sum_{\{u \in V \mid s \in \tau(u)\}}\mu(u,v)u
              \quad &\text{if $s \notin \tau(v)$},\end{cases}\\
&=(T_s-(q-q^{-1}))v\\
&=\overline{T_s}v,
\end{align*}
and hence \(\overline{h\alpha} =\overline{h}\overline{\alpha}\)
for all \(h\in \mathcal{H}\) and \(\alpha \in \mathcal{A}V\!\).

\section{\(W\!\)-graph ideals}\label{section5}

Let \((W,S)\) be a Coxeter system and \(\mathcal{H}\) the
associated Hecke algebra. Let \(\mathscr I\) be an ideal in the
poset \((W,\leq_{L})\); that is, \(\mathscr{I}\) is a subset of
\(W\) such that every \(u\in W\) that is a suffix of an element of
\(\mathscr{I}\) is itself in~\(\mathscr{I}\!\). This condition
implies that \(\Pos(\mathscr{I})=S\setminus\mathscr{I}=\{\,s\in
S\mid s\notin\mathscr{I}\,\}\) (see Definition~\ref{Pos}). Let
\(J\) be a subset of \(\Pos(\mathscr{I})\), so that \(\mathscr
I\subseteq D_J\). For each \(w \in \mathscr I\) we define the
following subsets of \(S\):
\begin{align*}
\SA(w) &= \{\,s \in S \mid sw > w \text{ and } sw \in \mathscr I\,\},\\
\SD(w) &= \{\,s \in S \mid sw < w\,\},\\
\WA_{J}(w) &= \{\,s \in S \mid sw > w \text{ and } sw \in D_{J}
\setminus
\mathscr I\,\},\\
\WD_{J}(w) &= \{\,s \in S \mid sw > w \text{ and } sw \notin
D_{J}\,\}.
\end{align*}
Since  \(\mathscr I\subseteq D_J\) it is clear that, for each \(w
\in \mathscr I\), each \(s \in S\) appears in exactly one of the
four sets defined above. We call the elements of these sets the
strong ascents, strong descents, weak ascents and weak descents
of~\(w\) relative to \(\mathscr{I}\) and \(J\). In contexts where
the set~\(J\) is fixed we frequently omit reference to~\(J\),
writing \(\WA(w)\) and \(\WD(w)\) rather than \(\WA_{J}(w)\) and
\(\WD_{J}(w)\). We also define the sets of descents and ascents of
\(w\) by \(\D_{J}(w) = \SD(w) \cup \WD_{J}(w)\) and  \(\A_{J}(w) =
\SA(w) \cup \WA_{J}(w)\).

\begin{rem}
It follows from Lemma \ref{deo1} that
\begin{align*}
\WA_{J}(w) &= \{\,s \in S \mid sw \notin \mathscr I \text{ and } w^{-1}sw \notin J\,\}\\
\noalign{\vskip-6 pt\hbox{and}\vskip-6 pt} \WD_{J}(w) &= \{\,s \in
S \mid sw \notin \mathscr I \text{ and } w^{-1}sw \in J\,\},
\end{align*}
since \(sw \notin \mathscr I\) implies that \(sw>w\) (given that
\(\mathscr I\) is an ideal in \((W,\le_L)\)). Note also that
\(J=\WD_J(1)\).
\end{rem}

\begin{defn}\label{wgphdetelt}
With the above notation, the set \(\mathscr{I}\) is said to be a
\textit{\(W\!\)-graph ideal\/} with respect to \(J\) if the
following hypotheses are satisfied.
\begin{itemize}
\item[(i)] There exists an \(\mathcal{A}\)-free
\(\mathcal{H}\)-module \(\mathscr{S}=\mathscr{S}(\mathscr{I},J)\)
possessing an \(\mathcal{A}\)-basis \(B = (\,b_{w} \mid w \in
\mathscr I\,)\) on which the generators \(T_{s}\) act by
\begin{equation*}
T_{s}b_{w} =
\begin{cases}
  b_{sw}  & \text{if \(s \in \SA(w)\),}\\
  b_{sw} + (q - q^{-1})b_{w} & \text{if \(s \in \SD(w)\),}\\
  -q^{-1}b_{w} & \text{if \(s \in \WD_{J}(w)\),}\\
  qb_{w} - \sum\limits_{\substack{y \in \mathscr I\\y < sw}} r^s_{y,w}b_{y} &
  \text{if \(s \in \WA_{J}(w)\),}
\end{cases}
\end{equation*}
for some polynomials \(r^s_{y,w} \in q\mathcal{A}^{+}\!\).
\item[(ii)] The module \(\mathscr{S}\) admits an
\(\mathcal{A}\)-semilinear involution \(\alpha \mapsto
\overline{\alpha}\) satisfying \(\overline{b_1}=b_1\) and
\(\overline{h\alpha} =\overline{h}\overline{\alpha}\) for all
\(h\in \mathcal{H}\) and \(\alpha \in \mathscr{S}\).
\end{itemize}
\end{defn}

We shall show in Section~\ref{section7} below that if
\(\mathscr{I}\) is a \(W\!\)-graph ideal with respect to \(J\)
then the associated module \(\mathscr{S}(\mathscr{I},J)\) is
isomorphic to a \(W\!\)-graph module. Moreover, the \(W\!\)-graph
can be constructed by an algorithm that depends only on
\(\mathscr{I}\) and~\(J\). Hence \(\mathscr{S}(\mathscr{I},J)\) is
determined up to isomorphism by~\(\mathscr{I}\) and~\(J\).

\begin{rem}
As we shall see in Section~\ref{klparrel} below, it is quite
possible for an ideal \(\mathscr{I}\) to be a \(W\!\)-graph ideal
with respect to two different subsets~\(J\) of
\(\Pos(\mathscr{I})\), corresponding to two \(W\!\)-graph modules
that are not isomorphic. So the set \(J\) is an important part of
the definition of a \(W\!\)-graph ideal.
\end{rem}

\begin{defn}
If \(\Lambda\subseteq W\) and \(\mathscr{I}=\{\,u\in W\mid
u\le_Lw\text{ for some \(w\in\Lambda\)}\,\}\) is a \(W\!\)-graph
ideal then we call \(\Lambda\) a \(W\!\)\textit{-graph determining
set}, and we call \(w\in W\) a \(W\!\)\textit{-graph determining
element\/} if
 \(\{w\}\) is a \(W\!\)-graph determining set.
 \end{defn}

The simplest example of a \(W\!\)-graph determining element is
\(w_S\), the maximal length element of a finite Coxeter
group~\(W\!\), with \(J\) the empty subset of~\(S\). The \(W\!\)-graph
we obtain is the
Kazhdan-Lusztig \(W\!\)-graph corresponding to the regular
representation of~\(W\). More generally, if \(J\) is an arbitrary
subset of \(S\) then \(d_{J}\), the minimal length element of the
left coset \(w_SW_{J}\), is a \(W\!\)-graph determining element
with respect to~\(J\) and also with respect to~\(\emptyset\). In
both cases \(\mathscr I=D_{J}\), and we recover Deodhar's
parabolic analogues of the Kazhdan-Lusztig construction. See
Section~\ref{klparrel} below for the details.

\section{An example}\label{section6}

The general algorithm for constructing \(W\!\)-graphs from
\(W\!\)-graph ideals is deferred to the next section. In the
current section we present a motivational example.

Let \(W_{n}\) be the Coxeter group of type \(A_{n-1}\), which we
identify with the the symmetric group on \([1,n]\), the set of
integers from 1 to~\(n\), by identifying the simple reflections
\(s_1,\,s_2,\,\ldots,\,s_{n-1}\) with the transpositions
\((1,2),\,(3,4),\,\ldots,\,(n-1,n)\) (respectively). We use a
left-operator convention for permutations, writing \(wi\) for the
action of \(w\in W_n\) on \(i\in[1,n]\). It is well known that if
\(t=(i,j)\in W_n\) is an arbitrary transposition, with \(i<j\),
and \(w\in W_n\) is an arbitrary permutation, then \(wt<w\) if and
only if \(wi>wj\) and \(tw<w\) if and only if \(w^{-1}i>w^{-1}j\);
moreover, \(l(w)\) is the number of pairs \((i,j)\in
[1,n]\times[1,n]\) such that \(i<j\) and \(wi>wj\).

Since our example will involve Young diagrams and tableaux, we
need to start by recalling some basic definitions and establishing
our notation.

A sequence of positive integers \(\lambda =
(\lambda_{1},\lambda_{2} \ldots, \lambda_{k})\) is called a
partition of \(n\) if \(\lambda_{1} + \lambda_{2} + \cdots +
\lambda_{k} = n\) and \(\lambda_{1} \ge \cdots \ge \lambda_{k}\).
The \(\lambda_i\) are called the \textit{parts\/} of \(\lambda\).
We define \(P(n)\) to be the set of all
partitions of~\(n\). For each \(\lambda = (\lambda_{1}, \ldots,
\lambda_{k}) \in P(n)\) we define
\[
[\lambda] = \{\,(i,j) \mid 1 \leq j \leq \lambda_{i}\text{ and } 1
\leq i \leq k\,\},
\]
and refer to this as the Young diagram of~\(\lambda\). Pictorially
\([\lambda]\) is represented by a left-justified array of boxes
with \(\lambda_{i}\) boxes in the \(i\)-{th} row; the pair
\((i,j)\in[\lambda]\) corresponds to the \(j\)-{th} box in the
\(i\)-{th} row. Thus the Young diagram of \(\lambda =
(4,2,2)\) looks like this:
\[  \vcenter{\begin{Young}
        &&&\cr
        &\cr
        &\cr
      \end{Young}}
\]

If \(\lambda\) is a partition of \(n\) then a
\(\lambda\)\textit{-tableau} is a bijection \(t\colon[\lambda]
\rightarrow [1,n] \). In other words, \(t\) is a one to one
correspondence between the boxes of the Young diagram
\([\lambda]\) and the integers from 1 to~n. Of course \(t\) can be
conveniently described by writing the number \(t(i,j)\) in the box
\((i,j)\), for all \((i,j)\in[\lambda]\). For each \(i\in[1,n]\)
we define \(\row_t(i)\) and \(\col_t(i)\) to be the row index and
column index of \(i\)~in~\(t\) (so that
\(t^{-1}(i)=(\row_t(i),\col_t(i)\))). We define \(\Tab(\lambda)\)
to be the set of all \(\lambda\)-tableaux, and we let
\(\tab^\lambda\) be the specific \(\lambda\)-tableau given by
\[
\tab_\lambda(i,j)=j+\sum_{h=1}^{i-1}\lambda_h
\]
for all \((i,j)\in[\lambda]\). That is, the numbers
\(1,\,2,\,\dots,\,\lambda_1\) fill the first row of
\([\lambda]\) in order from left to right, then the numbers
\(\lambda_1+1,\,\lambda_1+2,\,\dots,\,\lambda_1+\lambda_2\)
similarly fill the second row, and so on. 

We define \(\tab_{\lambda}\) to be
the \(\lambda\)-tableau that is the transpose of the
\(\lambda'\)-tableau \(\tab^{\lambda'}\), where \(\lambda'\) is
the partition dual to~\(\lambda\). Thus \(\tab_{\lambda}\) is the
unique standard \hbox{\(\lambda\)-tableau} whose columns consist of
sequences of consecutive numbers, while \(\tab^{\lambda}\) is the
unique standard \(\lambda\)-tableau whose rows consist of
sequences of consecutive numbers. We shall find it
convenient to define \(\bx_\lambda(i)=\tab_\lambda^{-1}(i)\); thus
\(\bx_\lambda(i)\) is the box of \([\lambda]\) such that \(i\) is
in \(\bx_\lambda(i)\) in~\(\tab_\lambda\). We say that
\(\bx_\lambda(i)\) is ``earlier'' than \(\bx_\lambda(j)\) if
\(i<j\).

It is clear that for any fixed \(\lambda\in P(n)\) the group
\(W_{n}\) acts on the set of all \(\lambda\)-tableaux, via
\((wt)(i,j) = w(t(i,j))\) for all \((i,j)\in[\lambda]\), for all
\(\lambda\)-tableaux \(t\) and all \(w \in W_{n}\). Moreover, the
map from \(W_n\) to \(\Tab(\lambda)\) defined by \(w\mapsto
w\tab_\lambda\) for all \(w\in W_n\) is bijective. We use this
bijection to transfer the partial orders defined in Definitions
\ref{bruhatorder} and \ref{leftorder} from \(W_n\) to
\(\Tab(\lambda)\). Thus if \(t_1,\,t_2\) are arbitrary
\(\lambda\)-tableaux  and we write \(t_1=w_1\tab_\lambda\) and
\(t_2=w_2\tab_\lambda\) with \(w_1,\,w_2\in W_n\), then by
definition \(t_1\le t_2\) if and only if \(w_1\le w_2\), and
\(t_1\le_L t_2\) if and only if \(w_1\le_L w_2\). Similarly, if
\(t=w\tab_\lambda\) is an arbitrary \(\lambda\)-tableau, where
\(w\in W_n\), then we define \(l(t)=l(w)\).

For later reference, we note the following trivial result.

\begin{lem}\label{tabincrease} Let \(w\in W_n\) and let
\(t=w\tab_\lambda\) be the corresponding \(\lambda\)-tableau. If
\(i\in[1,n-1]\) then \(l(s_it)>l(t)\) if and only if either
\(\col_t(i)<\col_t(i+1)\) or \(\col_t(i)=\col_t(i+1)\) and
\(\row_t(i)<\row_t(i+1)\).
\end{lem}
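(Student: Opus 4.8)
The plan is to unwind the definitions and reduce the claim to a statement about the positions of the integers $i$ and $i+1$ in the tableau $t$. Recall that $s_i$ is the transposition $(i,i+1)$, so multiplying $t=w\tab_\lambda$ on the left by $s_i$ produces the tableau $s_it=(s_iw)\tab_\lambda$, and by our transfer of the length function to tableaux we have $l(s_it)=l(s_iw)$. The first observation I would record is the general fact quoted in Section~\ref{section6}: for a transposition $t=(a,b)$ with $a<b$ and an arbitrary permutation $w$, one has $tw<w$ if and only if $w^{-1}a>w^{-1}b$. Applying this with $t=s_i=(i,i+1)$ gives $l(s_iw)>l(w)$ if and only if $w^{-1}i<w^{-1}(i+1)$.

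Next I would translate the condition $w^{-1}i<w^{-1}(i+1)$ into tableau language. Since $t=w\tab_\lambda$, the box of $[\lambda]$ containing the integer $i$ in $t$ is $t^{-1}(i)=\tab_\lambda^{-1}(w^{-1}i)=\bx_\lambda(w^{-1}i)$, and similarly the box containing $i+1$ is $\bx_\lambda(w^{-1}(i+1))$. Thus $l(s_it)>l(t)$ if and only if $\bx_\lambda(w^{-1}i)$ is ``earlier'' than $\bx_\lambda(w^{-1}(i+1))$, in the sense defined just before Lemma~\ref{tabincrease}, i.e.\ the box of $i$ in $t$ precedes the box of $i+1$ in $t$ in the filling order of $\tab_\lambda$.

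Finally I would identify what ``earlier in $\tab_\lambda$'' means geometrically. By construction $\tab_\lambda$ fills $[\lambda]$ down the first column with consecutive integers, then down the second column, and so on; hence the box $(i_1,j_1)$ is earlier than $(i_2,j_2)$ precisely when $j_1<j_2$, or $j_1=j_2$ and $i_1<i_2$. Reading off $j$ as the column index and $i$ as the row index of the relevant integers in $t$, this says exactly that $\col_t(i)<\col_t(i+1)$, or $\col_t(i)=\col_t(i+1)$ and $\row_t(i)<\row_t(i+1)$, which is the desired conclusion.

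This is genuinely a routine unpacking, so there is no real obstacle; the only point requiring a little care is keeping the inverses straight — the left action of $s_i$ on $w$ is governed by $w^{-1}$, so it is the positions (in $t$) of the \emph{values} $i$ and $i+1$ that matter, not the values occupying fixed positions. Once that bookkeeping is done correctly, combining the three observations above yields the lemma immediately.
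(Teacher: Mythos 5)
Your proof is correct and follows essentially the same route as the paper's: both reduce $l(s_it)>l(t)$ to $w^{-1}i<w^{-1}(i+1)$ via the standard criterion for left multiplication by a transposition, and then read off the comparison from the column-by-column filling of $\tab_\lambda$ (your identity $t^{-1}(i)=\bx_\lambda(w^{-1}i)$ is just the paper's $w^{-1}i=\tab_\lambda(\row_t(i),\col_t(i))$ stated in the other direction). No gaps.
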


\begin{proof}
Observe that \(w^{-1}i=w^{-1}(t(\row_t(i),\col_t(i)))
=\tab_\lambda(\row_t(i),\col_t(i))\), and similarly
\(w^{-1}(i+1)=\tab_\lambda(\row_t(i+1),\col_t(i+1))\). Since
\(\tab_\lambda(j,k)<\tab_\lambda(j',k')\) if and only if either
\(k<k'\) or \(k=k'\) and \(j<j'\), the condition that
\(\col_t(i)<\col_t(i+1)\) or \(\col_t(i)=\col_t(i+1)\) and
\(\row_t(i)<\row_t(i+1)\) is equivalent to
\(w^{-1}i<w^{-1}(i+1)\). Since this in turn is equivalent to
\(l(s_iw)>l(w)\), the result follows.
\end{proof}

A \(\lambda\)-tableau \(t\), where \(\lambda \in P(n)\), is said
to be \textit{column standard\/} if its entries increase down the
columns, that is, if \(t(i,j) < t(i+1,j)\) whenever
\((i,j)\in[\lambda]\) and \((i+1,j)\in[\lambda]\). Similarly, \(t\)
is said to be \textit{row standard\/} if
its entries increase along the rows, that is, if \(t(i,j) <
t(i,j+1)\) whenever \((i,j)\in[\lambda]\) and
\((i,j+1)\in[\lambda]\). A \textit{standard tableau\/} is a
tableau that is both column standard and row standard. We write
\(\CSTD(\lambda)\), \(\RSTD(\lambda)\) and \(\STD(\lambda)\) for
the sets of all column standard tableaux, row standard tableaux
and standard tableaux for \(\lambda\).

Given \(\lambda\in P(n)\) we define \(J_\lambda\) to be the subset
of \(S\) consisting of those simple reflections \(s_i=(i,i+1)\)
such that \(i\) and \(i+1\) lie in the same column
of~\(\tab_\lambda\), and we define \(W_\lambda\) to be the
standard parabolic subgroup of \(W_n\) generated by~\(J_\lambda\).
Thus, by our convention, \(W_\lambda\) is the column stabilizer
of~\(\tab_\lambda\) rather than the row stabilizer of \(\tab^\lambda\).
Moreover, the set of minimal left coset representatives for
\(W_\lambda\) in \(W_n\) is the set
\[
D_\lambda=\{\,d\in W_n\mid di<d(i+1)\text{ whenever \(s_i\in
J_\lambda\)}\,\}
\]
since the condition \(di<d(i+1)\) is equivalent to
\(l(ds_i)>l(d)\). It follows that \(\{\,d\tab_\lambda\mid d\in
D_\lambda\,\}\) is precisely the set of column standard
\(\lambda\)-tableaux.

Now suppose that \(t\in\STD(\lambda)\) and \(t \neq \tab^{\lambda}\).
Choose \(i\) to be the
least integer whose position in \(t\) is not the same as its
position in~\(\tab^\lambda\), and let
\(j=t(\row_{\tab^\lambda}(i),\col_{\tab^\lambda}(i))\), the number
whose position in \(t\) is the position of~\(i\)
in~\(\tab^\lambda\). If \(h=\row_t(j)\) then the number \(j-1\)
cannot appear to the left of~\(j\) in the \(h\)-th row of~\(t\),
or in any earlier row, since these positions are occupied by the
numbers from 1~to~\(i-1\). Hence, since \(t\) is standard, it
follows that \(\row_t(j-1)>\row_t(j)\) and
\(\col_t(j-1)<\col_t(j)\). In particular, since \(j-1\) and \(j\)
are not in the same row of \(t\) or the same column of~\(t\), the
tableau obtained from \(t\) by swapping the positions of \(j-1\)
and \(j\) is still standard. That is,
\(s_{j-1}t\in\STD(\lambda)\). But by Lemma~\ref{tabincrease} above
we see that \(l(s_{j-1}t)>l(t)\), and therefore \(t<_L s_{j-1}t\).
So \(t\) is not maximal in the ordering~\(<_L\), and it follows
that \(\tab^\lambda\) is the unique maximal standard
\(\lambda\)-tableau relative to~\(<_L\).

Similarly, if \(t\in\STD(\lambda)\) and \(s_jt<t\) for some
\(j\in[1,n-1]\), then \(t\) has \(j+1\) in an earlier box
than~\(j\), and since \(t\) is standard we see that
\(\row_t(j+1)>\row_t(j)\) and \(\col_t(j+1)<\col_t(j)\). Thus
\(s_jt\in\STD(\lambda)\). So if \(t'\) is any \(\lambda\)-tableau
such that \(t'<_Lt\) then \(t'\) is standard. Hence we obtain the
following result (see \cite[Lemma 1.5]{dipjam:heckA}).
\begin{lem}\label{characterisetstd1}
Let \(\lambda \in P(n)\) and define \(v_\lambda\in W_n\) by the
requirement that \(\tab^\lambda=v_\lambda\tab_\lambda\). Then
\(\STD(\lambda)=\{\,w\tab_\lambda \mid w\le_L v_\lambda\,\} =
\{\,t \in \Tab(\lambda) \mid t\le_L \tab^{\lambda}\,\}\).
\end{lem}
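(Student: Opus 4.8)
The plan is to prove the displayed characterisation of $\STD(\lambda)$ by assembling the two halves already worked out in the preceding paragraphs: the fact that $\tab^\lambda$ is the unique $\le_L$-maximal standard tableau, and the fact that any $\lambda$-tableau lying $\le_L$ below a standard tableau is itself standard. First I would note that the two sets on the right are literally the same object rephrased through the bijection $w\mapsto w\tab_\lambda$, since $t\le_L\tab^\lambda$ means $w\le_L v_\lambda$ when $t=w\tab_\lambda$ and $\tab^\lambda=v_\lambda\tab_\lambda$; so only one containment pair needs genuine work.

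For the inclusion $\{\,t\in\Tab(\lambda)\mid t\le_L\tab^\lambda\,\}\subseteq\STD(\lambda)$, I would invoke the second observation proved just above the statement: if $t\in\STD(\lambda)$ and $s_jt<t$ then $s_jt\in\STD(\lambda)$, equivalently every immediate $\le_L$-predecessor of a standard tableau is standard. Since $\tab^\lambda$ is standard, a straightforward downward induction along a chain $\tab^\lambda\ge_L\cdots\ge_L t$ in the weak order (each step removing one simple reflection on the left) shows every $t\le_L\tab^\lambda$ is standard. For the reverse inclusion $\STD(\lambda)\subseteq\{\,t\in\Tab(\lambda)\mid t\le_L\tab^\lambda\,\}$, I would use the first observation: for any $t\in\STD(\lambda)$ with $t\ne\tab^\lambda$ there is a simple reflection $s_{j-1}$ with $t<_L s_{j-1}t$ and $s_{j-1}t\in\STD(\lambda)$. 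Iterating, one produces a strictly $\le_L$-increasing chain of standard tableaux starting at $t$; since $W_n$ is finite this chain must terminate, and it can only terminate at a tableau with no strong ascent inside $\STD(\lambda)$, which by the first observation forces that terminal tableau to be $\tab^\lambda$. Hence $t\le_L\tab^\lambda$.

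I do not expect any real obstacle here: the substantive combinatorial content — that swapping $j-1,j$ (resp.\ $j,j+1$) preserves standardness in the relevant direction, and the length/column comparisons via Lemma~\ref{tabincrease} — has already been carried out in the text immediately preceding the lemma, so the proof is essentially a packaging argument. The only point requiring a word of care is the termination of the increasing chain, which is immediate from finiteness of $W_n$ (or from boundedness of the length function). I would therefore write the proof in two short paragraphs mirroring the two observations, perhaps phrased as: ``By the discussion above, $\tab^\lambda$ is the unique $\le_L$-maximal element of $\STD(\lambda)$, and every $\le_L$-predecessor of a standard tableau is standard; the result follows.''

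\begin{proof}
Since $\tab^\lambda = v_\lambda\tab_\lambda$, the equality of the two sets on the right of the asserted identity is immediate from the definition of $\le_L$ on $\Tab(\lambda)$: for $t = w\tab_\lambda$ we have $t\le_L\tab^\lambda$ if and only if $w\le_L v_\lambda$. It remains to show that $\STD(\lambda) = \{\,t\in\Tab(\lambda)\mid t\le_L\tab^\lambda\,\}$.

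The discussion preceding the lemma shows that if $t\in\STD(\lambda)$ and $s_jt < t$ for some $j\in[1,n-1]$ then $s_jt\in\STD(\lambda)$. Hence, given any $t\le_L\tab^\lambda$, choosing a chain $\tab^\lambda = t_0 >_L t_1 >_L \cdots >_L t_m = t$ in which each $t_{k+1}$ is obtained from $t_k$ by left multiplication by a single simple reflection that decreases length, and arguing by induction on $k$, we conclude that every $t_k$ is standard; in particular $t\in\STD(\lambda)$.

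Conversely, let $t\in\STD(\lambda)$. If $t\ne\tab^\lambda$ then, as shown above, there is $j\in[1,n]$ with $j\ge 2$ such that $s_{j-1}t\in\STD(\lambda)$ and $t <_L s_{j-1}t$. Repeating this produces a strictly $\le_L$-increasing sequence of standard $\lambda$-tableaux beginning with $t$; since $l(t')\le l(w_S)$ for every $t' = w'\tab_\lambda$, this sequence is finite, and its final term $t^\ast$ admits no further strong ascent within $\STD(\lambda)$, forcing $t^\ast = \tab^\lambda$ by the argument above. Therefore $t\le_L\tab^\lambda$, completing the proof.
\end{proof}
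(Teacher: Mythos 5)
Your proof is correct and follows essentially the same route as the paper, whose own justification is precisely the two observations established in the discussion immediately preceding the lemma (every standard tableau other than \(\tab^\lambda\) has a strong ascent within \(\STD(\lambda)\), and every tableau \(\le_L\) a standard tableau is standard), with the lemma then stated as their consequence. Your write-up merely makes explicit the chain/termination and downward-induction steps that the paper leaves implicit, which is fine.
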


For \(t \in \STD(\lambda)\), define
\begin{align*}
\SA(t) &= \{\,i \in [1,n-1] \mid \col_{t}(i) < \col_{t}(i+1)\text{
and }
\row_{t}(i) \neq \row_{t}(i+1)\,\},\\
\SD(t) &= \{\,i \in [1,n-1] \mid \col_{t}(i) > \col_{t}(i+1)\,\},\\
\WA(t) &= \{\,i \in [1,n-1] \mid \row_{t}(i) = \row_{t}(i+1)\,\},\\
\WD(t) &= \{\,i \in [1,n-1] \mid \col_{t}(i) = \col_{t}(i+1)\,\}.
\end{align*}
Observe that if \(\mathscr I\) is the left ideal of
\((W_n,\le_L)\) generated by \(v_\lambda\) and if \(J=J_\lambda\),
then for each \(w\in\mathscr I\) the sets
\(\SA(w),\,\SD(w),\,\WA_J(w)\) and \(\WD_J(w)\) as defined in
Section~\ref{section5} coincide with the sets
\(\SA(w\tab_\lambda),\,\SD(w\tab_\lambda),\,\WA(w\tab_\lambda)\)
and \(\WD(w\tab_\lambda)\) as defined above.

Let \(\mathcal{H}_n=\mathcal{H}(W_{n})\) be the Hecke algebra of
\(W_n\). Thus \(\mathcal{H}_n\) is generated by elements
\(T_1,\,T_2,\,\ldots,\,T_{n-1}\) satisfying
\(T_iT_{i+1}T_i=T_{i+1}T_iT_{i+1}\) for all \(i\in[1,n-2]\) and
\(T_iT_j=T_jT_i\) for all \(i,\,j\in[1,n-1]\) with \(|i-j|>1\), as
well as \(T_i^2=1+(q-q^{-1})T_i\) for all~\(i\in[1,n-1]\). Let
\(\lambda \in P(n)\) and let \(S^{\lambda}\) be the Specht module
for \(\mathcal{H}_n\) corresponding to \(\lambda\). It follows
from results proved in \cite[Chapter 3]{math:heckeA} that
\(S^{\lambda}\) has an \(\mathcal{A}\)-basis \((\,b_{t} \mid t
\in\STD(\lambda)\,)\) such that for all \(i \in [1,n-1]\) and \(t
\in \STD(\lambda)\),
\begin{equation}
T_{i}b_{t} =
\begin{cases}\label{stdbasis}
  b_{s_{i}t}  & \text{if \(i \in \SA(t)\),}\\
  b_{s_{i}t} + (q - q^{-1})b_{t} & \text{if \(i \in \SD(t)\),}\\
  -q^{-1}b_{t} & \text{if \(i \in \WD(t)\),}\\
  qb_{t} - \sum\limits_{s <t} r^{(i)}_{s,t}b_{s} &
\text{if \(i \in \WA(t)\),}
\end{cases}
\end{equation}
where the \(r^{(i)}_{s,t}\) in the last equation are in
\(\mathcal{A}\), but are not easy to describe explicitly.

The basis \((\,b_{t} \mid t \in \STD(\lambda)\,)\) is known as the
\textit{standard basis\/} of \(S^{\lambda}\). Note that our
hypotheses and conventions are slightly different from those used
in \cite{math:heckeA}, and hence our formulas above are also
slightly different from those in~\cite{math:heckeA}. More
explanation of \eqref{stdbasis} will be given below.

Let \(\mathcal{F}\) be the field of fractions of \(\mathcal{A}\),
and write \(\mathcal{F}S^\lambda\) for the \(\mathcal{F}\)-module
obtained from \(S^\lambda\) by extension of scalars. In this
context we can obtain the simpler \textit{seminormal form\/} of
the representation: \(\mathcal{F}S^\lambda\) has an
\(\mathcal{F}\)-basis \((\,b'_{t} \mid t \in \STD(\lambda)\,)\)
such that for all \(i\in[1,n-1]\) and \(t \in \STD(\lambda)\),
\begin{equation*}
T_{i}b'_{t} =
\begin{cases}
   -q^{-1}b'_{t} & \text{if \(i \in\WD(t)\),}\\
   qb'_{t} & \text{if \(i \in \WA(t)\),}\\
   p_{1}(d;q)b'_{t} +
   p_{2}(d;q)b'_{s_{i}t}& \text{otherwise},
\end{cases}
\end{equation*}
where \(d = (x_{1} - y_{1}) - (x_{2} - y_{2})\) if the row and
column indices of \(i\) and \(i+1\) in \(t\) are, respectively,
\(x_{1}\) and \(y_{1}\) and \(x_{2}\) and \(y_{2}\), and
\begin{align*}
p_{1}(d;q) &= (q^{2} - 1)/ (q - q^{2d+1}),\\
p_{2}(d;q) &= (1 - q^{2d+2})/(q - q^{2d+1}).
\end{align*}
A proof of the validity of these formulas can be found in the
paper of Ariki and Koike, \cite[Theorem 3.7]{arike:koikeHofWr}.
Note that we are using a variant of \(\mathcal{H}_n\) in which the
eigenvalues of the generators \(T_{i}\) are \(q\) and \(q^{-1}\),
whereas Ariki and Koike use the traditional \(q\) and \(-1\);
hence to convert the formulas Ariki and Koike give to the ones
that are appropriate for our context it was necessary to replace
\(q\) by \(q^{2}\) and \(T_{i}\) by \(qT_{i}\).

The seminormal form suffers the drawback that it gives matrix
coefficients that are not integral. The standard basis and the
\(W\!\)-graph basis \((\,c_{t} \mid t \in \STD(\lambda)\,)\) give
integral representations but no (currently known) simple formulae
for all the matrix coefficients. All three bases are related by
triangular basis changes, with \(c_{t_{\lambda}} = b_{t_{\lambda}}
= b'_{t_{\lambda}}\). (This vector spans the \(1\)-dimensional
subspace of \(S^{\lambda}\) consisting of those \(v\) such that
\(T_{w}v = (-q)^{-l(w)}v\) for all \(w\in W_\lambda\).)

Using the seminormal form of the representation it can easily be
shown that \(\mathcal{F}S^\lambda\) admits a semilinear involution
\(v\mapsto \overline v\) satisfying
\(\overline{hv}=\overline{h}\overline{v}\) for all
\(h\in\mathcal{H}_n\) and all \(v\in S^\lambda\). Indeed, if
\(v\in S^{\lambda}\) then \(v = \sum_{t \in \STD(\lambda)}
a_{t}b'_{t}\) for some coefficients \(a_t\in\mathcal{F}\), and we
define \(\overline{v} = \sum_{t \in
\STD(\lambda)}\overline{a_{t}}b'_{t}\). Then for all \(i \in
[1,n-1]\) and \(t \in \STD(\lambda)\),
\begin{equation*}
\overline{T_{i}}b'_{t} = T_{i}b'_{t} + (q^{-1} - q)b'_{t} =
\overline{T_{i}b'_{t}}
\end{equation*}
since \(p_{1}(d;q) + (q^{-1} - q) = p_{1}(d;q^{-1})\) and
\(p_{2}(d;q) = p_{2}(d;q^{-1})\). It follows by linearity that
\(\overline{T_iv} = \overline{T_i}\overline{v}\) for all \(i \in
[1,n-1]\) and all \(v\in\mathcal{F}S^\lambda\), and this gives the
desired result since the \(T_i\) generate~\(\mathcal{H}_n\).

Now for our example. We take \(n=7\) and let \(\lambda=(3,3,1)\),
a partition of 7 giving a Specht module of dimension~21. The 21
standard tableaux \(t_1,\,t_2,\,\ldots,t_{21}\) are listed in
order below.
\begin{gather*}
\begin{Young}
1&4&6\cr 2&5&7\cr 3\cr
\end{Young}\hskip 6pt plus 1 pt minus 1 pt
\begin{Young}
1&3&6\cr 2&5&7\cr 4\cr
\end{Young}\hskip 6pt plus 1 pt minus 1 pt
\begin{Young}
1&2&6\cr 3&5&7\cr 4\cr
\end{Young}\hskip 6pt plus 1 pt minus 1 pt
\begin{Young}
1&3&6\cr 2&4&7\cr 5\cr
\end{Young}\hskip 6pt plus 1 pt minus 1 pt
\begin{Young}
1&2&6\cr 3&4&7\cr 5\cr
\end{Young}\hskip 6pt plus 1 pt minus 1 pt
\begin{Young}
1&4&5\cr 2&6&7\cr 3\cr
\end{Young}\hskip 6pt plus 1 pt minus 1 pt
\begin{Young}
1&3&5\cr 2&6&7\cr 4\cr
\end{Young}\\
\begin{Young}
1&2&5\cr 3&6&7\cr 4\cr
\end{Young}\hskip 6pt plus 1 pt minus 1 pt
\begin{Young}
1&3&4\cr 2&6&7\cr 5\cr
\end{Young}\hskip 6pt plus 1 pt minus 1 pt
\begin{Young}
1&2&4\cr 3&6&7\cr 5\cr
\end{Young}\hskip 6pt plus 1 pt minus 1 pt
\begin{Young}
1&2&3\cr 4&6&7\cr 5\cr
\end{Young}\hskip 6pt plus 1 pt minus 1 pt
\begin{Young}
1&3&5\cr 2&4&7\cr 6\cr
\end{Young}\hskip 6pt plus 1 pt minus 1 pt
\begin{Young}
1&2&5\cr 3&4&7\cr 6\cr
\end{Young}\hskip 6pt plus 1 pt minus 1 pt
\begin{Young}
1&3&4\cr 2&5&7\cr 6\cr
\end{Young}\\
\begin{Young}
1&2&4\cr 3&5&7\cr 6\cr
\end{Young}\hskip 6pt plus 1 pt minus 1 pt
\begin{Young}
1&2&3\cr 4&5&7\cr 6\cr
\end{Young}\hskip 6pt plus 1 pt minus 1 pt
\begin{Young}
1&3&5\cr 2&4&6\cr 7\cr
\end{Young}\hskip 6pt plus 1 pt minus 1 pt
\begin{Young}
1&2&5\cr 3&4&6\cr 7\cr
\end{Young}\hskip 6pt plus 1 pt minus 1 pt
\begin{Young}
1&3&4\cr 2&5&6\cr 7\cr
\end{Young}\hskip 6pt plus 1 pt minus 1 pt
\begin{Young}
1&2&4\cr 3&5&6\cr 7\cr
\end{Young}\hskip 6pt plus 1 pt minus 1 pt
\begin{Young}
1&2&3\cr 4&5&6\cr 7\cr
\end{Young}
\end{gather*}
Note that we have chosen a total ordering of \(\STD(\lambda)\)
that is consistent with the partial ordering~\(\le\), in the sense
that if \(i\le j\) then \(t_i\le t_j\). Let
\(b_1,\,b_2,\,\ldots,\,b_{21}\) be the standard basis elements
corresponding (respectively) to \(t_1,\,t_2,\,\ldots,\,t_{21}\).
We shall construct a new basis \(c_1,\,c_2,\,\ldots,\,c_{21}\)
such that for all~\(j\),
\[
c_j=b_j-q\sum_{i<j}f_{i,j}c_i
\]
for certain \(f_{i,j} \in \mathbb{Z}[q]\), to be defined
recursively. In terms of this new basis the action of the algebra
will be as follows:
\begin{equation}\label{wgformulas}
T_kc_j=
\begin{cases}
-q^{-1}c_j&\text{if \(k\in \D(t_j)\),}\\
qc_j+\sum_{i\in \mathcal R(k,j)} \mu_{i,j}c_i&\text{if \(k\in \WA(t_j)\),}\\
qc_j+c_h+\sum_{i\in \mathcal R(k,j)} \mu_{i,j}c_i&\text{if \(k\in
\SA(t_j)\),}
\end{cases}
\end{equation}
where \(h\) is defined by \(s_kt_j=t_h\), the set \(\mathcal
R(k,j)\) consists of all \(i<j\) such that \(k\) is a descent
of~\(t_i\), and \(\mu_{i,j}\) is the constant term of~\(f_{i,j}\).

These conditions easily yield formulas for the \(c_j\), as listed
below. To start the process, \(c_1=b_1\) is given. Now to find
\(c_2\), we first find a strong descent \(r\) of \(t_2\); in this
case, the only choice is \(r=3\). By the third formula above we
must have \(T_3c_1=qc_1+c_2\), and thus
\(c_2=T_3b_1-qc_1=b_2-qc_1\). In general, to find \(c_h\) given
that the earlier \(c_j\)'s have already been found, first find
\(k\in \SD(t_h)\), and let \(t_j=s_kt_h\). Then
\begin{align*}
c_h&=T_kc_j-qc_j-\sum_{i\in \mathcal R(k,j)}\mu_{i,j}c_i\\
&=T_k\bigl(b_j-q\sum_{i<j}f_{i,j}c_i\bigr)-qc_j-\sum_{i\in \mathcal R(k,j)}\mu_{i,j}c_i\\
&=b_h-qc_j-q\sum_{i<j}f_{i,j}T_kc_i-\sum_{i\in \mathcal
R(k,j)}\mu_{i,j}c_i,
\end{align*}
which can be expressed in the form
\(b_h-q\sum_{\ell<h}f_{\ell,h}c_\ell\) by using the formulas for
evaluating \(T_kc_i\). The crucial point is that the coefficient
of each \(c_\ell\) in \(-qf_{i,j}T_kc_i\) will be a polynomial
divisible by \(q\) unless \(i\in \mathcal R(k,j)\), in which case
\(-qf_{i,j}T_kc_i=f_{i,j}c_i\), and the constant term
\(\mu_{i,j}c_i\) is cancelled by one of the terms in the second
sum. In this way all the terms in the second sum also disappear.

For example, having found \(c_2\), to find \(c_3\) we first
observe that 2 is a descent of \(t_3\) and \(s_2t_3=t_2\), giving
\[
c_3=b_3-qc_2-q\sum_{i<2}f_{i,2}T_2c_i-\sum_{i\in \mathcal
R(2,2)}\mu_{i,2}c_i.
\]
Since \(f_{1,2}=\mu_{1,2}=1\) and \(2\in \D(t_1)\) we find that
\(-qf_{1,2}T_2c_1=c_1=\mu_{1,2}c_1\), leaving \(c_3=b_3-qc_2\).
After similarly calculating that \(c_4=b_4-qc_2\), the calculation
for \(c_5\) proceeds as follows. Since \(r=2\) is a descent of
\(t_5\) with \(s_2t_5=t_4\),
\begin{align*}
c_5&=b_5-qc_4-qT_2c_2-\sum_{i\in \mathcal R(2,4)}\mu_{i,4}c_i,\\
&=b_5-qc_4-q(qc_2+c_3+c_1)-0,
\end{align*}
since the fact that \(2\notin \D(t_2)\) means that \(\mathcal
R(2,4)\) is empty, and \(T_2c_2=qc_2+c_3+c_1\) (since \(2\in
D(t_1)\) and \(\mu_{1,2}=1\), and \(s_2t_2=t_3\)). As a further
example, the calculations involved in deriving the formula for
\(c_{21}\) are given below.
\begin{align*}
c_1&=b_1,\\
c_2&=b_2-qc_1,\\
c_3&=b_3-qc_2,\\
c_4&=b_4-qc_2,\\
c_5&=b_5-qc_4-qc_3-q^2c_2-qc_1,\\
c_6&=b_6-qc_1,\displaybreak[0]\\
c_7&=b_7-qc_6-qc_2-q^2c_1,\displaybreak[0]\\
c_8&=b_8-qc_7-qc_3-q^2c_2,\displaybreak[0]\\
c_9&=b_9-qc_7-q^2c_6-qc_4-q^2c_2-qc_1,\displaybreak[0]\\
c_{10}&=b_{10}-qc_9-qc_8-q^2c_7-qc_5-q^2c_4-q^2c_3-q^3c_2-q^2c_1,\displaybreak[0]\\
c_{11}&=b_{11}-qc_{10}-q^2c_9-q^2c_5-qc_4-q^3c_1,\displaybreak[0]\\
c_{12}&=b_{12}-qc_7-qc_4-q^2c_2,\displaybreak[0]\\
c_{13}&=b_{13}-qc_{12}-qc_8-q^2c_7-qc_6-qc_5-q^2c_4-q^2c_3-q^3c_2-q^2c_1,\displaybreak[0]\\
c_{14}&=b_{14}-qc_{12}-qc_9-q^2c_7-q^2c_4-q^3c_2-q^2c_1,\displaybreak[0]\\
c_{15}&=b_{15}-qc_{14}-qc_{13}-q^2c_{12}-qc_{10}-q^2c_9-q^2c_8-q^3c_7-q^2c_6\\[-2pt]
&\qquad-q^2c_5-q^3c_4-q^3c_3-q^4c_2-q^3c_1,\\
c_{16}&=b_{16}-qc_{15}-qc_{14}-q^2c_{13}-qc_{12}-qc_{11}-q^2c_{10}-q^3c_9-qc_8\\[-2pt]
&\qquad-q^2c_7-q^3c_6-q^3c_5-q^2c_4-q^4c_1,\\
c_{17}&=b_{17}-qc_{12}-q^2c_7,\\
c_{18}&=b_{18}-qc_{17}-qc_{13}-q^2c_{12}-q^2c_8-q^3c_7-q^2c_6,\\
c_{19}&=b_{19}-qc_{17}-qc_{14}-q^2c_{12}-q^2c_9-q^3c_7-qc_4,\\
c_{20}&=b_{20}-qc_{19}-qc_{18}-q^2c_{17}-qc_{15}-q^2c_{14}-q^2c_{13}-q^3c_{12}-q^2c_{10}\\[-2pt]
&\qquad-q^3c_9-q^3c_8-q^4c_7-q^3c_6-qc_5-q^2c_4-q^2c_1,\\
c_{21}&=b_{21}-qc_{20}-q^2c_{19}-q^2c_{18}-qc_{17}-qc_{16}-q^2c_{15}-q^3c_{14}-q^3c_{13}\\[-2pt]
&\qquad-q^2c_{12}-q^2c_{11}-q^3c_{10}-q^4c_9-q^2c_8-q^3c_7-q^4c_6-q^2c_5\\[-2pt]
&\qquad-(q^3+q)c_4-qc_3-q^2c_2-q^3c_1.
\end{align*}

Here are the calculations for \(c_{21}\). We have
\(s_3t_{21}=t_{20}\); so
\begin{align*}
c_{21}&=b_{21}-qc_{20}-\sum_{i<20}f_{i,20}T_3c_i-\!\!\sum_{i\in \mathcal R(3,20)}\!\!\mu_{i,20}c_i\\
&=b_{21}-qc_{20}-qT_3c_{19}-qT_3c_{18}-q^2T_3c_{17}-qT_3c_{15}-q^2T_3c_{14}\\[-2 pt]
&\qquad-q^2T_3c_{13}-q^3T_3c_{12}-q^2T_3c_{10}-q^3T_3c_9-q^3T_3c_8-q^4T_3c_7\\[-2 pt]
&\qquad-q^3T_3c_6-qT_3c_5-q^2T_3c_4-q^2T_3c_1-\!\!\sum_{i\in
\mathcal R(3,20)}\!\!\mu_{i,20}c_i.
\end{align*}
Now 3 is a descent of $t_{17}$, $t_{12}$, $t_8$, $t_7$ and $t_4$;
so
\begin{equation}\label{1st}\begin{split}
\ \ -q^2T_3c_{17}-q^3T_3c_{12}-q^3T_3c_8-q^4T_3c_7-q^2T_3c_4\hfill\\
\hfill=qc_{17}+q^2c_{12}+q^2c_8+q^3c_7+qc_4\ \
\end{split}
\end{equation}
and we see also that the sum \(\sum_{i\in \mathcal
R(3,20)}\mu_{i,20}c_i\) has no nonzero terms. Turning to the other
terms in the expression for \(c_{21}\), the coefficient of \(q\)
in the formula for \(c_{19}\) tells us that
\(\mu_{17,19}=\mu_{14,19}=\mu_{4,19}=1\), and thus
\begin{equation}\label{2nd}
-qT_3c_{19} =-q(qc_{19}+c_{17}+c_4)
\end{equation}
since $s_3t_{19}$ does not exist, and 3 is in $D(t_{17})$ and
$D(t_4)$ but not $D(t_{14})$. Similarly
\begin{align*}
-qT_3c_{18}&=-q(qc_{18}+c_{17})\\
-qT_3c_{15}&=-q(qc_{15}+c_{16})\\
-q^2T_3c_{14}&=-q^2(qc_{14}+c_{12})\displaybreak[0]\\
-q^2T_3c_{13}&=-q^2(qc_{13}+c_{12}+c_8)\displaybreak[0]\\
-q^2T_3c_{10}&=-q^2(qc_{10}+c_{11}+c_8)\displaybreak[0]\\
-q^3T_3c_9&=-q^3(qc_9+c_7+c_4)\displaybreak[0]\\
-q^3T_3c_6&=-q^3(qc_6+c_7)\\
-qT_3c_5&=-q(qc_5+c_4+c_3)\\
-q^2T_3c_1&=-q^2(qc_1+c_2),
\end{align*}
and we leave it to the reader to check that when these formulas
together with \eqref{1st} and \eqref{2nd} above are substituted
into our expression for $c_{21}$ the answer is as given
previously.

The above example is meant to illustrate a procedure that will
work for all Specht modules. Although it is clear enough that the
procedure will produce a basis \((\,c_{t} \mid t \in \STD(\lambda)\,)\)
such that the formulas in Equation \ref{wgformulas} hold, it is not
clear that the these formulas define a representation of \(\mathcal{H}\).
The proof that they do relies on Proposition~\ref{Howlettconj} below,
which is proved in~\cite{nguyen:wgideals2}. The algorithm has been
implemented using the computational algebra system MAGMA~\cite{Magma},
and in particular has been used in the case \(\lambda=(5,5,3,3)\) to
confirm the result of McLarnan and Warrington~\cite{mclwar} that
in this case 5 occurs as an edge-weight in the~\(W\!\)-graph.
\footnote{The magma files used can be obtained from
\texttt{http://www.maths.usyd.edu.au/u/bobh/magma/}, or from
\texttt{http://magma.maths.usyd.edu.au/magma/extra/}. It is planned
to include them in the next release of MAGMA.}

We now briefly indicate how to adapt the discussion of the
standard basis of \(S^\lambda\) given in \cite{math:heckeA} to
yield the formulas in~\eqref{stdbasis} above. It follows from
Corollary 3.4, Corollary 3.21 and Proposition 3.22 of
\cite{math:heckeA} that the Specht module \(S^{\lambda}\) (defined
immediately after Corollary 3.21) has a basis \((\,m_{t} \mid t
\in \STD(\lambda)\,)\) such that
\begin{equation*}
T_{i}m_{t} =
\begin{cases}
  m_{s_{i}t}  & \text{if \(i \in \SD(t)\),}\\
  qm_{s_{i}t} + (q - 1)m_{t} & \text{if \(i \in \SA(t)\),}\\
  qm_{t} & \text{if \(i \in \WA(t)\),}\\
  -m_{t} + \sum\limits_{s < t} a_{s,t}^{(i)}m_{s} & \text{if \(i \in \WD(t)\),}
\end{cases}
\end{equation*}
where the elements \(a_{s,t}^{(i)}\) are polynomials in \(q\).
Note that \cite{math:heckeA} employs the traditional definition of
\(\mathcal{H}_n\), so that to make the above formulas compatible
with our definitions we should replace \(q\) by \(q^{2}\) and
\(T_{i}\) by \(qT_{i}\). After this we use the automorphism of
\(\mathcal{H}_n\) given by \(T_{i} \rightarrow -T_{i}^{-1} =
-\overline{T_{i}}\) to define a new action, obtaining a module
that we call the dual Specht module. This gives
\begin{equation*}
-\overline{T_{i}}m_{t} =
\begin{cases}
  q^{-1}m_{s_{i}t}  & \text{if \(i \in \SD(t)\),}\\
  qm_{s_{i}t} + (q - q^{-1})m_{t} & \text{if \(i \in \SA(t)\),}\\
  qm_{t} & \text{if \(i \in \WA(t)\),}\\
  -q^{-1}m_{t} + q^{-1}\sum\limits_{s < t} a_{s,t}^{(i)}m_{s} & \text{if \(i\in\WD(t)\),}
\end{cases}
\end{equation*}
where now the \(a_{s,t}^{(i)}\) are polynomials in \(q^{2}\). We
apply these formulas for the module corresponding to the partition
\(\lambda'\) dual to \(\lambda\). This dualises again, swapping
ascents and descents, and giving a module that has a basis
\((\,m_{t} \mid t \in \STD(\lambda)\,)\) (not the same as the
\(m_t\)'s we started with) satisfying
\begin{equation*}
\overline{T_{i}}m_{t} =
\begin{cases}
 -q^{-1}m_{s_{i}t}  & \text{if \(i \in \SA(t)\),}\\
 -qm_{s_{i}t} - (q - q^{-1})m_{t} & \text{if \(i \in \SD(t)\),}\\
 -qm_{t} & \text{if \(i \in \WD(t)\),}\\
 q^{-1}m_{t} - q^{-1}\sum\limits_{s < t} a_{s,t}^{(i)}m_{s} & \text{if \(i \in \WA(t)\).}
\end{cases}
\end{equation*}
We now define \(b_t=(-q)^{l(t)}\overline{m_t}\). Applying the
involution \(v\mapsto\overline v\) to both sides of the above
formulas and multiplying through by \((-q)^{l(t)}\) yields
\eqref{stdbasis} above.

The following proposition was proved in the second author's PhD
thesis \cite[Theorem 6.3.4]{nguyen:thesis}. A shorter proof is presented
in \cite{nguyen:wgideals2}, a recently submitted followup to the present paper.
\begin{prop}\label{Howlettconj}
The elements \(r_{st}^{(i)}\) appearing in \eqref{stdbasis} are
polynomials in \(q\) with zero constant term.
\end{prop}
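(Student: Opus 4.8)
The plan is to trace the construction of the standard basis back to the Murphy-type basis \((m_t)\), thereby converting the Proposition into a degree bound on the coefficients \(a^{(i)}_{s,t}\), and then to establish that bound by an induction on length modelled on the Kazhdan--Lusztig recursion.

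Since \(b_t=(-q)^{l(t)}\overline{m_t}\) and the bar involution of \(\mathcal H_n\) is a ring automorphism acting \(\mathcal A\)-semilinearly on every module, applying the bar involution to the last displayed formula for \(\overline{T_i}m_t\) and multiplying through by \((-q)^{l(t)}\) gives, for \(i\in\WA(t)\),
\[
T_ib_t \;=\; qb_t-\sum_{s<t}(-1)^{l(t)-l(s)}\,q^{\,l(t)-l(s)+1}\,\overline{a^{(i)}_{s,t}}\;b_s ,
\]
so that \(r^{(i)}_{s,t}=(-1)^{l(t)-l(s)}q^{\,l(t)-l(s)+1}\overline{a^{(i)}_{s,t}}\). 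Because the \(a^{(i)}_{s,t}\) lie in \(\mathbb Z[q^{2}]\) and \(l(s)<l(t)\) whenever \(s<t\), this Laurent polynomial belongs to \(q\mathcal A^{+}\) exactly when \(\deg_q a^{(i)}_{s,t}\le l(t)-l(s)\). Hence the Proposition is equivalent to this degree bound; Mathas's results supply \(a^{(i)}_{s,t}\in\mathbb Z[q^{2}]\) but not, as far as I can see, the bound, and the bound is the entire content.

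To prove it I would induct on \(l(t)\). If \(t=\tab_\lambda\) there is nothing to prove, as \(\tab_\lambda\) is the minimum of \((\STD(\lambda),\le)\) and the sum over \(s<t\) is empty. If \(t\ne\tab_\lambda\) then some \(j\) satisfies \(s_jt<t\), and the discussion before Lemma~\ref{characterisetstd1} shows that such a \(j\) lies in \(\SD(t)\); set \(u=s_jt\), so \(l(u)=l(t)-1\), \(j\in\SA(u)\) and \(m_t=-q\,\overline{T_j}m_u\). Expanding \(\overline{T_i}m_t=-q\,\overline{T_i}\,\overline{T_j}m_u\) by means of the quadratic and braid relations of \(\mathcal H_n\), together with the action of \(\overline{T_j}\) on the \(m_s\) with \(s<u\) and the inductive bounds for \(u\), yields a recursion expressing each \(a^{(i)}_{s,t}\) as a \(\mathbb Z[q^{2}]\)-combination of the \(a^{(i)}_{\bullet,u}\) and the \(a^{(j)}_{\bullet,\bullet}\). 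Which clause of \eqref{stdbasis} governs a given term, and how lengths change when \(\overline{T_j}\) is applied, is dictated by the relative positions of \(i\), \(i+1\), \(j\), \(j+1\) in the tableaux concerned --- that is, by the descent combinatorics set out in Section~\ref{section6}.

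The difficulty, and the point where real work is needed, is that a naive term-by-term estimate of this recursion overshoots by one: the factor \(q-q^{-1}\) coming from the quadratic relation \(T_j^{2}=1+(q-q^{-1})T_j\) raises the apparent degree of one summand to \(l(t)-l(s)+1\), and no single other summand has degree large enough to absorb it. This is precisely the phenomenon that forces the bound \(\deg P_{y,w}\le\tfrac12(l(w)-l(y)-1)\) on Kazhdan--Lusztig polynomials to be proved by an induction that simultaneously tracks the top coefficient. Accordingly I would strengthen the inductive hypothesis to record, alongside the inequality \(\deg_q a^{(i)}_{s,t}\le l(t)-l(s)\), the coefficient of \(q^{\,l(t)-l(s)}\) in \(a^{(i)}_{s,t}\) (the analogue of the Kazhdan--Lusztig \(\mu\)), and then show that the offending leading terms cancel. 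Carrying this refined bookkeeping through every case --- and in particular treating the braid case \(|i-j|=1\), where \(s_i\) and \(s_j\) interact and the behaviour on tableaux is genuinely different --- is the technical heart of the argument; it is the combinatorics of standard tableaux, governing which entries of \(t\) are moved by \(s_i\) and \(s_j\) and in which direction, that makes the cancellation go through. Once the strengthened statement is established the Proposition follows at once.
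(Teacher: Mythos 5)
Your reduction step is correct: applying the bar involution to \(\overline{T_i}m_t=q^{-1}m_t-q^{-1}\sum_{s<t}a^{(i)}_{s,t}m_s\) and multiplying by \((-q)^{l(t)}\) does give \(r^{(i)}_{s,t}=(-1)^{l(t)-l(s)}q^{\,l(t)-l(s)+1}\overline{a^{(i)}_{s,t}}\), so the Proposition is equivalent to the degree bound \(\deg_q a^{(i)}_{s,t}\le l(t)-l(s)\), and you are right that this bound, not the membership \(a^{(i)}_{s,t}\in\mathbb{Z}[q^2]\), is the whole content. (Note that the present paper does not prove the Proposition at all: it is quoted from the second author's thesis \cite{nguyen:thesis}, where the stronger statement that \(q^2\) divides \(r^{(i)}_{s,t}\) is proved, with a shorter proof in \cite{nguyen:wgideals2}; so there is no in-paper argument to compare yours with, and the statement is known to require substantial independent work.)

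The gap is exactly at the point you label the technical heart, and it is not a presentational omission but the entire difficulty. You correctly observe that the obvious induction via \(m_t=-q\,\overline{T_j}m_u\) with \(j\in\SD(t)\), \(u=s_jt\), overshoots the desired bound by one because of the \((q-q^{-1})\) term in the quadratic relation, and you propose to repair this by tracking top coefficients and ``showing that the offending leading terms cancel'' --- but you supply no mechanism that forces the cancellation. In the Kazhdan--Lusztig argument you invoke as a model, the analogous cancellation is not bookkeeping: it is forced by bar-invariance of the \(C\)-basis together with degree constraints established simultaneously. Here you exhibit neither a bar-invariant object expressed in the \(m\)-basis nor a closed family of statements about your putative \(\mu\)-analogues that is actually verified to propagate through the recursion, and the braid case \(|i-j|=1\), where the tableau combinatorics is genuinely different, is deferred entirely. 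As it stands the strengthened induction is a hope rather than an argument, so the proposal reduces the Proposition to an unproved cancellation claim of essentially the same depth rather than proving it.
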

In fact Theorem 6.3.4 of \cite{nguyen:thesis} was stronger, saying
that the \(r_{st}^{(i)}\) are divisible by~\(q^2\). But the weaker
version is sufficient for our present needs.

Given that Proposition~\ref{Howlettconj} is true, it follows that
\(v_{\lambda}\) satisfies all the hypotheses in
Definition~\ref{wgphdetelt}, and is a \(W\!\)-graph determining
element relative to~\(J_\lambda\). According to the theory
presented in the next section, it follows that \(S^\lambda\) has a
\(W\!\)-graph basis \((\,c_{t} \mid t \in \STD(\lambda)\,)\) which
can be computed by means of the algorithm illustrated above.

\section{Constructing the \(W\!\)-graph from
a \(W\!\)-graph ideal}\label{section7}

We return now to the situation described in Section \ref{section5}
above, and let \(\mathscr I\) be a \(W\!\)-graph ideal with
respect to \(J\subseteq S\). By Definition~\ref{wgphdetelt} there
is an \(\mathcal{H}\)-module \(\mathscr{S}\) possessing an
\(\mathcal{A}\)-basis \(B=(\,b_w\mid w\in\mathscr I\,)\) on which
the generators of \(\mathcal{H}\) act via the formulas in
Definition~\ref{wgphdetelt}. Moreover, there is an
\(\mathcal{A}\)-semilinear involution \(v\mapsto\overline v\) on
\(\mathscr{S}\) satisfying \(\overline{b_1}=b_1\) and
\(\overline{hv} = \overline{h}\overline{v}\) for all \(h \in
\mathcal{H}\) and \(v \in \mathscr{S}\).

\begin{lem}\label{sinsumw1}
For each \(w\in \mathscr I\) there exist coefficients \(r_{y,w}
\in \mathcal{A}\), defined for \(y \in \mathscr I\) and \(y < w\),
such that \(\overline{b_{w}} - b_{w} = \sum r_{y,w}b_{y}\)
(summation over \(\{\,y\in\mathscr I\mid y<w\,\}\)).
\end{lem}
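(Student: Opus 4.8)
The plan is to induct on $l(w)$. The base case $w=1$ is immediate: since $\overline{b_1}=b_1$ we get $\overline{b_w}-b_w=0$, which is the empty sum over $\{\,y\in\mathscr I\mid y<1\,\}=\emptyset$. (Note that $\overline{b_w}$ does lie in $\mathscr S=\mathcal A B$, by Definition~\ref{wgphdetelt}(ii), so it has an expansion in the basis $B$; the content of the lemma is that this expansion, apart from the term $b_w$ itself with coefficient $1$, involves only $b_z$ with $z<w$.)

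For the inductive step, given $w\in\mathscr I$ with $l(w)\ge1$, I would choose $s\in\SD(w)$ and set $v=sw$. Then $v\le_L w$, so $v\in\mathscr I$, and $l(v)<l(w)$; moreover $sv=w>v$ with $w\in\mathscr I$, so $s\in\SA(v)$ and hence $T_sb_v=b_w$. Using the multiplicativity of the bar involution together with $\overline{T_s}=T_s-(q-q^{-1})$, this gives $\overline{b_w}=\overline{T_s}\,\overline{b_v}=(T_s-(q-q^{-1}))\overline{b_v}$. Substituting the inductive formula $\overline{b_v}=b_v+\sum_{y\in\mathscr I,\,y<v}r_{y,v}b_y$ and expanding, the term $T_sb_v=b_w$ appears and cancels against $-b_w$, leaving
\[
\overline{b_w}-b_w=-(q-q^{-1})b_v+\sum_{y\in\mathscr I,\,y<v}r_{y,v}\bigl(T_sb_y-(q-q^{-1})b_y\bigr).
\]
It then suffices to show that each $T_sb_y$ (for $y\in\mathscr I$ with $y<v$) lies in the $\mathcal A$-span of $\{\,b_z\mid z\in\mathscr I,\ z<w\,\}$; the remaining terms $b_v$ and $b_y$ are already of this form, since $v<w$ and $y<v<w$.

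To finish I would run through the four cases of Definition~\ref{wgphdetelt} for $T_sb_y$. If $s\in\SD(y)$ then $T_sb_y=b_{sy}+(q-q^{-1})b_y$, and here $sy<y<w$ with $sy\in\mathscr I$ (being a suffix of $y$); if $s\in\WD_J(y)$ then $T_sb_y=-q^{-1}b_y$ and $y<w$. The two ascent cases are the only place where any real argument is needed: if $s\in\SA(y)$ or $s\in\WA_J(y)$ then $y<sy$ and $v<sv=w$, so Lemma~\ref{lifting1}, applied to the triple $y,\,v,\,s$, promotes $y<v$ first to $y\le sv=w$ and then to $sy\le sv=w$; since $sy=w$ would force $y=v$, in fact $sy<w$, and hence $b_{sy}$ (in the $\SA$ case) and every $b_z$ with $z<sy$ (in the $\WA$ case) is of the required form. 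So the main --- indeed the only --- substantive step is this use of the lifting property to confine the Bruhat support below $w$ after multiplying by $T_s$; everything else is formal bookkeeping, and, notably, the extra hypothesis on the $r^s_{y,w}$ (that they lie in $q\mathcal A^{+}$) plays no role here, only that all coefficients lie in $\mathcal A$.
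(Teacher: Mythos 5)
Your proof is correct and follows essentially the same route as the paper's: induct by writing $w=sv$ with $s$ a strong descent, use $\overline{T_s}=T_s-(q-q^{-1})$ together with the inductive expansion of $\overline{b_v}$, and then check the four cases of Definition~\ref{wgphdetelt} for $T_sb_y$, invoking Lemma~\ref{lifting1} in the two ascent cases to keep everything strictly below $w$. Your closing observation that only $r^s_{y,w}\in\mathcal A$ (not $q\mathcal A^+$) is needed here is also accurate and consistent with the paper's argument.
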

\begin{proof}
This is obvious when \(w=1\) since \(\overline{b_1} - b_1=0\) .
Proceeding inductively, suppose that \(w\in \mathscr I\) and
\(w\ne 1\), and choose \(s\in S\) such that \(w=su\) for some
\(u<w\). Then \(u\in \mathscr I\), and by the inductive hypothesis
there exist \(r_{z,u}\in\mathcal{A}\) with \(\overline{b_{u}} -
b_{u} = \sum_{\{z\in\mathscr I\mid z < u\}}r_{z,u}b_{z}\).
Moreover, \(s\in\SA(u)\), and so \(T_sb_u=b_{su}=b_w\). Thus
\begin{align*}
\overline{b_{w}} - b_{w} &=\overline{T_s}\,\overline{b_u}-T_sb_u\\
  &=(\overline{T_s}-T_s){b_u}+\overline{T_s}(\overline{b_u}-b_u)\\
&= (q^{-1}-q)b_u+\sum_{\substack{z < u\\z\in\mathscr
I}}r_{z,u}(T_s-(q-q^{-1}))b_{z}.
\end{align*}
Clearly \((q^{-1}-q)b_z\) is in the \(\mathscr A\)-module spanned
by \(\{y\,\in\mathscr I\mid y<w\,\}\)) whenever \(z\le u\), and so
it will suffice to show that \(T_sb_z\) is in this module whenever
\(z\in\mathscr I\) and \(z<u\). The formulas in
Definition~\ref{wgphdetelt} describe how to express \(T_sb_z\) as
an \(\mathcal{A}\)-linear combination of elements \(b_x\) for
\(x\in\mathscr I\), and our task is simply to check that every
\(x\) that occurs satisfies~\(x<w\).

The result is immediate if \(s\) is a weak descent of \(z\), since
in this case the only \(x\) that occurs is \(x=z\), and \(z<u<w\).
If \(s\) is a strong descent of \(z\) then \(x=z\) or \(x=sz\),
and in this case \(sz<z\). So again \(x\le z< u<w\), as required.

If \(s\) is a strong ascent of \(z\) then the only \(x\) that
occurs is \(x=sz\). Since \(z<u\) it follows from
Lemma~\ref{lifting1} that \(sz < su = w\), giving the required
result.

Finally, if \(s\) is a weak ascent of \(z\) then \(T_sb_{z}\) is a
linear combination of \(b_{z}\) and \(\{\,b_x\mid x\in\mathscr
I\text{ and }x<sz\,\}\). So either \(x=z<w\) or else \(x<sz <
su=w\) by  Lemma~\ref{lifting1}.
\end{proof}
Our aim is to construct a \(W\!\)-graph basis for \(\mathscr{S}\).
To do this we mimic the proof of Proposition~2 in
Lusztig~\cite{lusztig:leftcells}.
\begin{lem}\label{uniCbasis1}
The module \(\mathscr{S}\) has a unique \(\mathcal{A}\)-basis \(C
= (\,c_{w} \mid w\in \mathscr I\,)\) such that for all \(w \in
\mathscr I\) we have \(\overline{c_{w}} = c_{w}\) and
\begin{equation}\label{qpoly1}
b_{w} = c_{w} + q\sum_{y < w} q_{y,w}c_{y}
\end{equation}
for certain polynomials \(q_{y,w} \in \mathcal{A}^{+}\).
\end{lem}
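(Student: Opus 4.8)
The plan is to reproduce the familiar Kazhdan--Lusztig existence-and-uniqueness argument, running an induction along the Bruhat order restricted to $\mathscr{I}$. This order is well founded (length strictly decreases, and every interval $\{\,y\in\mathscr{I}\mid y<w\,\}$ is finite), so such an induction is legitimate even when $\mathscr{I}$ is infinite. Before starting I would isolate the elementary fact that for any $R\in\mathcal{A}$ satisfying $\overline{R}=-R$ there is a \emph{unique} $p\in q\mathcal{A}^{+}$ with $\overline{p}-p=R$: writing $R=\sum_{n}a_{n}q^{n}$, the condition $\overline{R}=-R$ forces $a_{-n}=-a_{n}$ and in particular $a_{0}=0$, so $p=-\sum_{n\ge 1}a_{n}q^{n}$ works; uniqueness holds because a bar-invariant element of $q\mathcal{A}^{+}$ must be $0$.

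For the construction, put $c_{1}=b_{1}$, which is bar-invariant by Definition~\ref{wgphdetelt}(ii) and for which the sum in \eqref{qpoly1} is empty. Now let $w\in\mathscr{I}$ with $w\ne 1$, and assume inductively that $c_{y}$ has been constructed for every $y\in\mathscr{I}$ with $y<w$, with $\overline{c_{y}}=c_{y}$ and $b_{y}=c_{y}+q\sum_{z<y}q_{z,y}c_{z}$ for suitable $q_{z,y}\in\mathcal{A}^{+}$; in particular the $c_{y}$ with $y<w$ are $\mathcal{A}$-linearly independent, being obtained from the $b_{y}$ by a unitriangular base change. I would look for $c_{w}$ of the shape $c_{w}=b_{w}-\sum_{y<w}p_{y,w}c_{y}$ with each $p_{y,w}\in q\mathcal{A}^{+}$ still to be pinned down. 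Since each $c_{y}$ is bar-invariant, the requirement $\overline{c_{w}}=c_{w}$ is equivalent to $\overline{b_{w}}-b_{w}=\sum_{y<w}(\overline{p_{y,w}}-p_{y,w})c_{y}$.

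Next I would apply Lemma~\ref{sinsumw1} to write $\overline{b_{w}}-b_{w}=\sum_{y<w}r_{y,w}b_{y}$ with $r_{y,w}\in\mathcal{A}$, and then substitute $b_{y}=c_{y}+q\sum_{z<y}q_{z,y}c_{z}$ to re-express this as $\overline{b_{w}}-b_{w}=\sum_{y<w}R_{y,w}c_{y}$ for suitable $R_{y,w}\in\mathcal{A}$; only indices $y<w$ can appear because $z<y<w$. Applying the bar involution to this identity and using $\overline{c_{y}}=c_{y}$ together with the linear independence of the $c_{y}$ yields $\overline{R_{y,w}}=-R_{y,w}$ for every $y<w$. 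The elementary lemma above now produces, for each such $y$, the unique $p_{y,w}\in q\mathcal{A}^{+}$ with $\overline{p_{y,w}}-p_{y,w}=R_{y,w}$; setting $q_{y,w}=p_{y,w}/q\in\mathcal{A}^{+}$ we obtain $c_{w}$ satisfying $\overline{c_{w}}=c_{w}$ and $b_{w}=c_{w}+q\sum_{y<w}q_{y,w}c_{y}$.

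Uniqueness is built into the construction: at every stage the equation $\overline{p_{y,w}}-p_{y,w}=R_{y,w}$ has a unique solution in $q\mathcal{A}^{+}$, so $c_{w}$ is forced once the $c_{y}$ for $y<w$ are fixed; equivalently, given a second basis $C'$ with the two stated properties, the same induction shows $c'_{w}=c_{w}$ for all $w$. Finally, $C$ is an $\mathcal{A}$-basis of $\mathscr{S}$ because \eqref{qpoly1} displays the transition matrix from $B$ to $C$ as unitriangular with respect to any total order refining the Bruhat order. I do not expect a genuine obstacle here, since this is the standard argument; the points needing care are the well-foundedness of the induction (supplied by finiteness of Bruhat intervals), the rewriting of $\overline{b_{w}}-b_{w}$ from the $b$-basis into the $c$-basis without introducing indices $\not< w$, and the verification that the defect coefficients $R_{y,w}$ are anti-invariant under the bar involution, which is exactly what makes the elementary lemma applicable.
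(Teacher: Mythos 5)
Your argument is correct and is essentially the paper's own proof: define $c_1=b_1$, induct along the Bruhat order on $\mathscr{I}$, use Lemma~\ref{sinsumw1} to express $\overline{b_w}-b_w$ in the already-constructed $c_y$'s, observe the coefficients are bar-antiinvariant by linear independence, and solve uniquely in $q\mathcal{A}^{+}$. The only differences are cosmetic — you make explicit the change from the $b$-basis to the $c$-basis and the well-foundedness of the induction, which the paper leaves implicit.
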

\begin{proof}
Clearly \eqref{qpoly1} holds for \(w=1\) if and only if
\(c_{1}=b_{1}\), and defining \(c_{1}=b_{1}\) also ensures that
\(\overline{c_1}=c_1\), since \(\overline{b_1}=b_1\) is given.

Now suppose that \(w \ne 1\), and assume, inductively, that for
all \(y < w\) there exists a unique element \(c_{y}\in
\mathscr{S}\) such that \eqref{qpoly1} holds and
\(\overline{c_{y}}=c_{y}\). Then Lemma~\ref{sinsumw1} gives
\[
b_{w} - \overline{b_{w}} = \sum_{y < w} r_{y,w}c_{y}
\]
for some coefficients \(r_{y,w} \in \mathcal{A}\), and applying
the involution \(v\mapsto\overline v\) we see that
\(\overline{r_{y,w}}= -r_{y,w}\) for all \(y < w\), since
\eqref{qpoly1} and linear independence of the elements \(b_y\)
ensure linear independence of the~\(c_y\). So the coefficient of
\(q^0\) in \(r_{y,w}\) must be zero, and for \(n>0\) the
coefficient of \(q^{-n}\) must be the negative of the coefficient
of~\(q^n\). Hence \(r_{y,w} = qs_{y,w} - \overline{qs_{y,w}}\) for
a uniquely determined \(s_{y,w} \in \mathcal{A}^{+}\). Moreover,
\(q_{y,w}=s_{y,w}\) gives the unique solution to \(b_{w} = c_{w} +
q\sum_{y < w} q_{y,w}c_{y}\) with \(q_{y,w}\in\mathcal{A}^+\) and
\(\overline{c_{w}}=c_{w}\). So there is a unique element \(c_w\)
satisfying our requirements, and the induction is complete.
\end{proof}

Throughout the remainder of this section we let the elements
\(c_w\) and the polynomials \(q_{y,w}\) be defined so that the
conditions of Lemma~\ref{uniCbasis1} are satisfied. We also define
\(\mu_{y,w}\) to be the constant term of~\(q_{y,w}\).

\begin{thr}\label{main1}
Let \(s\in S\) and \(w \in \mathscr I\). Then
\[
T_sc_{w}=
\begin{cases}
-q^{-1}c_{w}&\text{if \(s\in\D(w)\),}\\
qc_{w}+\sum_{y\in \mathcal R(s,w)}\mu_{y,w}c_{y}
&\text{if \(s\in\WA(w)\),}\\
qc_{w}+c_{sw}+\sum_{y\in \mathcal R(s,w)}\mu_{y,w}c_{y} &\text{if
\(s\in\SA(w)\),}
\end{cases}
\]
where the set \(\mathcal R(s,w)\) consists of all \(y\in\mathscr I\)
such that \(y < w\) and \(s\in\D(y)\).
\end{thr}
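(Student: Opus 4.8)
The plan is to prove the formula by induction on $l(w)$, following the structure of Lusztig's argument in \cite{lusztig:leftcells}, but tracking carefully how the three cases of Definition~\ref{wgphdetelt} interact with the basis change of Lemma~\ref{uniCbasis1}. The base case $w=1$ is straightforward: here $\SD(1)=\emptyset$, $\WD_J(1)=J$ (as noted in the Remark), and $\A_J(1)=S\setminus J$, while $c_1=b_1$, so the claimed formulas reduce to the action on $b_1$ given in Definition~\ref{wgphdetelt}, with $\mathcal R(s,1)=\emptyset$. For the inductive step, fix $s\in S$ and $w\in\mathscr I$ with $w\neq 1$; the argument naturally splits according to whether $s\in\D(w)$ or $s\in\A(w)$.

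First I would dispose of the ascent cases. Suppose $s\in\A(w)$, so $sw>w$. Using \eqref{qpoly1}, write $b_w=c_w+q\sum_{y<w}q_{y,w}c_y$, and apply $T_s$. On the left, $T_sb_w$ is given explicitly by Definition~\ref{wgphdetelt}: it equals $b_{sw}$ if $s\in\SA(w)$, and $qb_w-\sum_{y<sw}r^s_{y,w}b_y$ if $s\in\WA(w)$. On the right, $T_sc_w=T_sb_w-q\sum_{y<w}q_{y,w}T_sc_y$, and by the inductive hypothesis each $T_sc_y$ (for $y<w$) is a known $\mathcal A$-combination of the $c_z$'s. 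Re-expanding $b_{sw}$ (respectively the $b_y$) via \eqref{qpoly1}, one obtains an expression for $T_sc_w$ as an $\mathcal A$-combination of $\{c_z\mid z\le sw\}$. The key points to verify are: (a) $T_sc_w$ is $\overline{\phantom{x}}$-invariant, which follows since $T_s\overline{\phantom{x}}=\overline{T_s}\,\overline{\phantom{x}}$ modulo a correction that matches the $-q^{-1}$/$q$ dichotomy (here one uses $\overline{qc_w}-qc_w=(q-q^{-1})c_w$ to check the coefficient of $c_w$ comes out right, and that the lower terms assemble into an invariant vector); and (b) the coefficient of $c_w$ is exactly $q$, and the coefficient of $c_{sw}$ is exactly $1$ when $s\in\SA(w)$ and $0$ when $s\in\WA(w)$ (this requires that $q_{sw,sw}=1$ in the expansion of $b_{sw}$, and that no $q^0$ term sneaks in from the $q\sum q_{y,w}T_sc_y$ part with the wrong sign). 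Then, having pinned down that $T_sc_w=qc_w+\varepsilon c_{sw}+\sum_{y<w}\nu_{y,w}c_y$ for some $\nu_{y,w}\in\mathcal A$ with the whole right side invariant, one shows $\nu_{y,w}\in q\mathcal A^+$ unless $s\in\D(y)$, in which case $\nu_{y,w}$ is forced to be the constant $\mu_{y,w}$. This last divisibility claim is exactly where the hypothesis $r^s_{y,w}\in q\mathcal A^+$ is used, together with the inductive description of $T_sc_y$ and the fact that $q_{y,w}\in\mathcal A^+$.

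The descent case $s\in\D(w)$ is handled by reduction to the ascent case, which is the standard trick. Since $w\neq 1$ and $\mathscr I$ is an ideal, one may pick $r\in S$ with $rw<w$, i.e. $r\in\SD(w)$; set $u=rw$, so $u\in\mathscr I$, $l(u)=l(w)-1$, and $b_w=T_rb_u$. If $s=r$, then from $T_s^2=1+(q-q^{-1})T_s$ we get $T_sc_w$ by applying $T_s$ to $T_sc_u$, whose expansion is known inductively since $l(u)<l(w)$; the computation $T_s(T_sc_u)=c_u+(q-q^{-1})T_sc_u$ then yields the $-q^{-1}c_w$ formula after simplification, provided $s\in\D(w)$ forces the corresponding behaviour on the lower terms — one checks the $c_y$ with $s\in\D(y)$ contribute correctly and the rest vanish. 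If $s\neq r$, the cleaner route is to observe that the asserted formulas, viewed as defining operators $\tau_s$ on $\mathcal AC$, already satisfy the Hecke relations $\tau_s^2=1+(q-q^{-1})\tau_s$ by construction (each $c_w$ is an eigenvector with eigenvalue $-q^{-1}$ or, after one application, the relation holds), so it suffices to know $T_sc_w=\tau_sc_w$ for $w$ with $s\in\A(w)$, and then propagate to all of $\mathscr I$ using $b_w=T_rb_u$ and the already-established ascent formulas together with the braid relations. The main obstacle I anticipate is the bookkeeping in step (b) of the ascent case: showing that after the double expansion no spurious constant term appears in the coefficient of any $c_y$ with $y<w$ and $s\notin\D(y)$, and dually that exactly the constant $\mu_{y,w}$ survives when $s\in\D(y)$ — this is the combinatorial heart of the argument and mirrors the delicate cancellation illustrated in the worked example of Section~\ref{section6}, where the $-qf_{i,j}T_kc_i$ terms with $i\in\mathcal R(k,j)$ collapse to $f_{i,j}c_i$ and cancel against the $\mu$-sum.
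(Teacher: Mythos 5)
Your base case, the two ascent cases, and the strong descent case follow essentially the paper's route: expand \(b_w\) via \eqref{qpoly1}, use Definition~\ref{wgphdetelt} for \(T_sb_w\), evaluate the \(T_sc_y\) by induction, and let the bar-involution force the surviving coefficients to be the constants \(\mu_{y,w}\) (for weak ascents the hypothesis \(r^s_{y,w}\in q\mathcal{A}^+\) makes that entire contribution cancel). One correction of substance in that part: it is not \(T_sc_w\) that is fixed by the involution but \((T_s-q)c_w\) (equivalently \((T_s+q^{-1})c_w\)), since \(\overline{T_sc_w}=\overline{T_s}\,\overline{c_w}=T_sc_w-(q-q^{-1})c_w\); your ``modulo a correction'' needs to be exactly this statement, because the whole cancellation argument consists of reading off the coefficients of a bar-fixed vector.

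The genuine gap is the weak descent case \(s\in\WD(w)\). Your reduction (pick \(r\in\SD(w)\); if \(s\ne r\), propagate from the ascent formulas using the quadratic and braid relations) does not compute \(T_sc_w\) in this case: here \(sw>w\) with \(sw\notin\mathscr I\), so \(c_w\) never occurs as the distinguished new term \(c_{sw'}\) of any ascent formula, and knowing that \(T_s\) and your operators \(\tau_s\) both satisfy the quadratic relation and agree on \(\{\,c_x\mid s\in\A(x)\,\}\) does not determine \(T_s\) on \(c_w\). The strong descent case succeeds precisely because \(c_w\) appears with coefficient \(1\) in \(T_sc_{sw}\), which is your \(s=r\) computation; there is no analogue for weak descents. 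The paper treats this case by a separate argument: Definition~\ref{wgphdetelt} gives \((T_s+q^{-1})b_w=0\), hence \((T_s+q^{-1})c_w=-q\sum_{y<w}q_{y,w}(T_s+q^{-1})c_y\); by induction the terms with \(s\in\D(y)\) vanish, the remaining ones contribute \((q^2+1)q_{y,w}c_y\) modulo the span of \(\{\,c_x\mid s\in\D(x)\,\}\), and bar-invariance of \((T_s+q^{-1})c_w\) forces \((q^2+1)q_{y,w}\) to be bar-fixed, hence \(q_{y,w}=0\) for every \(y<w\) with \(s\notin\D(y)\), giving \(T_sc_w=-q^{-1}c_w\). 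This vanishing is a substantive fact (it is reused in the proof of Theorem~\ref{main-wg}), so it cannot be obtained by the formal propagation you suggest; you need to supply this argument, or an equivalent one, for the proof to be complete.
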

\begin{proof}
Suppose first that \(w = 1\). If \(s\notin\mathscr I\) then either
\(s\in \WD(1)\) (if \(s\in J\)) or \(s\in \WA(1)\) (if \(s\notin
J\)), and since \(c_{1} = b_{1}\) it follows from the formulas in
Definition~\ref{wgphdetelt} that
\begin{equation*}
T_sc_{1} = \begin{cases} -q^{-1}c_{1} & \text{ if \(s\in\WD(1)\)}\\
                         qc_{1} & \text{ if \(s\in\WA(1)\)}.
                           \end{cases}
\end{equation*}
Since the set \(\mathcal R(s,1)\) is obviously empty, the formulas
in the statement of the theorem hold in these two cases. If
\(s\in\mathscr I\) then clearly \(s\in\SA(1)\) since \(s1<1\) is
impossible, and in this case Definition~\ref{wgphdetelt} gives
\(T_sb_1=b_s\). So
\[
b_{s}-\overline{b_{s}}=T_sc_1-\overline{T_sc_1}=(T_s-\overline{T_s})c_1=(q-q^{-1})c_{1}.
\]
Thus \(q_{1,s}=1\) and \eqref{qpoly1} becomes \(b_s=c_s+qc_1\),
giving
\[
T_sc_1=b_s=qc_1+c_s,
\]
as required.

Proceeding by induction, suppose now that $w > 1$, and consider
first the case that \(s\in\SD(w)\). Then \(y = sw < w\), and
\(s\in\SA(y)\); so the inductive hypothesis gives
\[
T_sc_y=qc_y+c_w+\!\!\!\sum_{x \in \mathcal
R(s,y)}\!\!\mu_{x,y}c_{x},
\]
which can be rewritten as
\[
c_{w} = (T_s-q)c_{y} - \!\!\!\sum_{x \in \mathcal R(s,y)}
\!\!\mu_{x,y}c_{x}.
\]
Since \(T_s(T_s-q)=-q^{-1}(T_s-q)\) and \(T_sc_{x} =
-q^{-1}c_{x}\) for all \(x\in \mathcal R(s,y)\) (by the inductive
hypothesis), it follows that
\[
T_sc_{w} = -q^{-1}c_{w}.
\]
as required.

Now consider the case that \(s\in\WD(w)\).
Definition~\ref{wgphdetelt} gives
\[(T_s+q^{-1})b_{w}=0,\]
and so by \eqref{qpoly1},
\begin{equation}\label{WD1}
(T_s+q^{-1})c_{w}=-q\sum_{y < w}q_{y,w}(T_s+q^{-1})c_{y}.
\end{equation}
If \(y<w\) then \((T_s+q^{-1})c_{y}=0\) if \(s\in\D(y)\), while if
\(s\notin\D(y)\) then
\[
(T_s+q^{-1})c_{y}=(q+q^{-1})c_y+v_y
\]
for some \(v_y\) in \(\mathscr{S}_{\!s}^-\), the subspace
of~\(\mathscr{S}\) spanned by \(\{\,c_{x}\mid s\in\D(x)\,\}\).
Hence
\[
(T_s+q^{-1})c_{w}=\Bigl(-\sum_{y\in Y}q_{y,w}(q^2+1)c_{y}\Bigr)+v
\]
for some \(v\in \mathscr{S}_{\!s}^-\), where \(Y=\{\,y\mid
y<w\text{ and }s\notin\D(y)\,\}\). Since the map
\(v\mapsto\overline v\) fixes \((T_s+q^{-1})c_{w}\) it follows
that when \((T_s+q^{-1})c_{w}\) is expressed as a linear
combination of the the basis elements \(c_y\), all the
coefficients are fixed. Hence
\[
\overline{(q^2+1)q_{y,w}}=(q^2+1)q_{y,w}
\]
for all~\(y<w\) such that \(s\notin\D(y)\). But since
\((q^2+1)q_{y,w}\) is a polynomial in~\(q\) this forces it to be a
constant, and hence forces \(q_{y,w}=0\). So all the terms on the
right-hand side of \eqref{WD1} disappear, and
\[
T_sc_{w}=-q^{-1}c_{w}
\]
as required. Note that this argument has shown that for all \(s\)
such that \(s\in\WD(w)\), the right-hand side of \eqref{qpoly1}
involves only elements \(c_y\) such that \(s\in\D(y)\).

Suppose next that \(s\in\SA(w)\), so that \(w<sw\in \mathscr I\).
By Lemma~\ref{uniCbasis1} and Definition~\ref{wgphdetelt} we have
\begin{align*}
T_sc_{w} &=T_sb_{w}-q\sum_{y< w}q_{y,w}T_sc_{y}\\
          &=b_{sw}-q\sum_{y< w}q_{y,w}T_sc_{y}\\
          &=c_{sw}+q\sum_{y< sw}q_{y,sw}c_{y}
            -q\sum_{y< w}q_{y,w}T_sc_{y}.
\end{align*}
Applying the inductive hypothesis to evaluate the \(T_sc_{y}\) in
the second sum gives
\begin{multline*}
(T_s-q)c_{w} =c_{sw}-qc_{w}+q\sum_{y<sw}q_{y,sw}c_{y}
+\!\!\!\sum_{y\in \mathcal R(s,w)}\!\!q_{y,w}c_{y}\\[-2 pt]
-q\!\!\!\sum_{y\in \mathcal R'(s,w)}\!\!
q_{y,w}\Bigl(qc_{y}+c_{sy}+\!\!\!\sum_{x\in \mathcal R(s,y)}
\!\!\mu_{x,y}c_{x}\Bigr)
\end{multline*}
where \(c_{sy}\) is to be interpreted as zero if \(s\in\WA(y)\),
and we have written \(\mathcal R'(s,w)\) for the set of all \(y <
w\) such that \(s\notin\D(y)\). Now since there are no negative
powers of \(q\) appearing in any of the coefficients on the right
hand side, but \(\overline{(T_s-q)c_{w}}=(T_s-q)c_{w}\), we deduce
that all the coefficients must simply be integers, and the
positive powers of \(q\) must cancel out. So
\[
(T_s-q)c_{w}=c_{sw}+\!\!\!\sum_{y\in \mathcal
R(s,w)}\!\!\mu_{y,w}c_{y},
\]
where \(\mu_{y,w}\) is the constant term of \(q_{y,w}\), as
required.

As a by-product of the above calculations we have shown that
\begin{equation}\label{recformula}
\begin{split}
-qc_{w}+q\sum_{y< sw}q_{y,sw}c_{y}
&+\!\!\!\!\sum_{y\in \mathcal R(s,w)}\!\!\!(q_{y,w}-\mu_{y,w})c_{y}\\
&=q\!\!\!\sum_{y\in \mathcal R'(s,w)}\!\!
q_{y,w}\Bigl(qc_{y}+c_{sy}+\!\!\!\sum_{x\in \mathcal R(s,y)}
\!\!\mu_{x,y}c_{x}\Bigr),
\end{split}
\end{equation}
whenever \(w<sw\in \mathscr I\). We shall return to this below,
and use it to obtain a recursive formula for the
polynomials~\(q_{y,w}\).

Finally, suppose that \(s \in\WA(w)\). By (i) of
Definition~\ref{wgphdetelt} this gives
\[
(T_s-q)b_{w}=-\sum_{y < sw}r_{y,w}^sb_{y},
\]
for some \(r_{y,w}^s \in q\mathcal{A}^{+}\), so that by
Lemma~\ref{uniCbasis1}
\[
(T_s-q)c_{w}+q\sum_{y < w}q_{y,w}(T_s-q)c_{y} =
 -\sum_{y<sw}r_{y,w}^s\Bigl(c_{y}+q\sum_{x < y}q_{x,y}c_{x}\Bigr).
\]
Hence \((T_s-q)c_{w}\) is equal to
\begin{equation}\label{tk-qc}
\begin{split}\quad
\sum_{y\in\mathcal R(s,w)}\!\!\!q_{y,w}(q^2+1)c_{y}
-\!\!\!\!\sum_{y\in\mathcal R'(s,w)}\!\!\!qq_{y,w}
\Bigl(c_{sy}&+\!\!\!\!\sum_{x\in\mathcal R(s,y)}\!\!\!
\mu_{x,y}c_{x}\Bigr)\\ &-\sum_{y <
sw}r_{y,w}^s\Bigl(c_{y}+q\sum_{x < y}q_{x,y}c_{x}\Bigr),
\end{split}
\end{equation}
where again \(c_{sy}\) is interpreted as 0 if \(s\in\WA(y)\).
Since \(\overline{(T_s-q)c_{w}} = (T_s-q)c_{w}\) it follows again
that all terms involving positive powers of \(q\) must cancel out;
this includes all of \(\sum_{y < sw}r_{y,w}^s(c_{y}+q\sum_{x <
y}q_{x,y}c_{x})\) since \(r_{y,w}^s\in q\mathcal{A}^+\). Hence
\[
(T_s-q)c_{w} = \!\!\!\sum_{y\in \mathcal
R(s,w)}\!\!\mu_{y,w}c_{y},
\]
where \(\mu_{y,w}\) is the constant term of \(q_{y,w}\), as
required.
\end{proof}

Returning now to \eqref{recformula}, which holds whenever
\(w<sw\in \mathscr I\), we proceed to derive the promised
recursive formula for the polynomials~\(q_{y,w}\).

Observe first that \(c_{w}\) does not occur on the right hand side
of \eqref{recformula} or in the last sum on the left hand side;
hence it follows that \( q_{w,sw}=1\). Next, examining the
coefficients of \(c_{z}\) when \(z\in\mathcal{R}'(s,w)=\{\,z<w\mid
s\notin\D(z)\,\}\) gives \(q_{z,sw}=qq_{z,w}\) in this case. (Note
that when \(z\ne w\) and \(s\notin\D(z)\) the conditions \(z< w\)
and \(z< sw\) are equivalent, by Lemma \ref{lifting1}.) Finally,
suppose that \(z<sw\) and \(s\in\D(z)\). If \(z\not<w\) then
\(z=sy\) for some \(y\in\mathcal{R}'(s,w)\), and the coefficient
of \(c_z\) on the right hand side of \eqref{recformula} is
\(qq_{sz,w}\), while on the left hand side it is \(qq_{z,sw}\).
Thus \(q_{sz,w}=q_{z,sw}\) in this case. If \(z<w\) and
\(s\in\SD(z)\) then \(sz\in\mathcal{R}'(s,w)\), and we see that
\(c_z\) occurs on the right hand side of \eqref{recformula} as
\(c_{sy}\) when \(y=sz\), and also occurs in the sums
\(\sum_{x\in\mathcal R(s,y)}\mu_{x,y}c_{x}\) for those
\(y\in\mathcal{R}'(s,w)\) such that \(z<y\). Thus the coefficient
of \(c_z\) on the right hand side of \eqref{recformula} is
\(qq_{sz,w}+q\sum_y\mu_{z,y}q_{y,w}\), where the sum is over all \(y\in
\mathscr I\) such that \(z< y < w\) and \(s\notin D(y)\). On the
left hand side of \eqref{recformula} the coefficient of \(c_z\) is
\(qq_{z,sw}+(q_{z,w}-\mu_{z,w})\). Hence
\begin{equation}
q_{z,sw} = -q^{-1}(q_{z,w}-\mu_{z,w})+q_{sz,w}
+\sum_y\mu_{z,y}q_{y,w}
\end{equation}
where the sum is over all \(y\in \mathscr I\) such that \(z< y <
w\) and \(s\notin D(y)\). If \(z<w\) and \(s\in\WD(z)\) then we
obtain the same formula without the \(q_{sz,w}\) term.

We have proved the following result.
\begin{cor}\label{recursion}
Suppose that \(w<sw\in \mathscr I\) and \(y<sw\). If \(y=w\) then
\(q_{y,sw}=1\), and if \(y\ne w\) we have the following formulas:
\begin{itemize}
\item[(i)] \(q_{y,sw}=qq_{y,w}\) \ if \(s\in\A(y)\), \item[(ii)]
\(q_{y,sw} = -q^{-1}(q_{y,w}-\mu_{y,w})+q_{sy,w}
+\sum_x\mu_{y,x}q_{x,w}\) \ if \(s\in\SD(y)\), \item[(iii)]
\(q_{y,sw} = -q^{-1}(q_{y,w}-\mu_{y,w})+\sum_x\mu_{y,x}q_{x,w}\) \
if \(s\in\WD(y)\),
\end{itemize}
where \(q_{y,w}\) and \(\mu_{y,w}\) are regarded as \(0\) if
\(y\not<w\), and in \textup{(ii)} and \textup{(iii)} the sums
extend over all \(x\in \mathscr I\) such that \(y<x<w\) and
\(s\notin\D(x)\).
\end{cor}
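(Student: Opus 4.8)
The plan is to read off every formula in the statement by comparing, basis vector by basis vector, the two sides of the identity \eqref{recformula}, which was established in the proof of Theorem~\ref{main1} to hold whenever \(w<sw\in\mathscr I\). Since \(C=(c_u\mid u\in\mathscr I)\) is an \(\mathcal A\)-basis of \(\mathscr S\), \eqref{recformula} is equivalent to the family of scalar identities obtained by equating, for each \(z\in\mathscr I\), the coefficient of \(c_z\) on its two sides, and the whole corollary amounts to carefully cataloguing which terms contribute a multiple of a given \(c_z\). The first equation to extract is that for \(z=w\): on the left \(c_w\) occurs only in the term \(-qc_w\) and in the sum \(q\sum_{y<sw}q_{y,sw}c_y\) (legitimately, since \(w<sw\)), while on the right it cannot occur at all, because \(\mathcal R'(s,w)\) and each set \(\mathcal R(s,y)\) consist of elements strictly below \(w\) and \(c_{sy}=c_w\) would force \(y=sw>w\); hence \(-q+q\,q_{w,sw}=0\), that is, \(q_{w,sw}=1\).

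For \(z\ne w\) with \(z<sw\) I would split into the three cases of the statement according to whether \(s\in\A(z)\), \(s\in\SD(z)\), or \(s\in\WD(z)\); these are mutually exclusive and exhaustive because \(z\in\mathscr I\), so that \(s\) lies in exactly one of the four sets \(\SA(z)\), \(\SD(z)\), \(\WA(z)\), \(\WD(z)\). A useful preliminary observation, from the lifting property Lemma~\ref{lifting1} applied to the pair \(z<sz\), \(w<sw\), is that whenever \(z<sz\) --- hence in cases (i) and (iii) --- the hypothesis \(z<sw\) already forces \(z<w\), so the ``\(=0\)'' convention is not needed there; applying the same lemma to the pair \(sz<z\), \(w<sw\) shows that in case (ii) one always has \(sz<w\), even though \(z\not<w\) is genuinely possible, and it is exactly this case that the convention ``\(q_{z,w}=\mu_{z,w}=0\) if \(z\not<w\)'' is designed to handle.

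Next I would inventory the sources of \(c_z\) on the two sides of \eqref{recformula}. On the left these are the term \(q\,q_{z,sw}c_z\), together with \((q_{z,w}-\mu_{z,w})c_z\) exactly when \(z\in\mathcal R(s,w)\), i.e.\ \(z<w\) and \(s\in\D(z)\). On the right, examining the coefficient of \(c_z\) in \(q\sum_{y\in\mathcal R'(s,w)}q_{y,w}\bigl(qc_y+c_{sy}+\sum_{x\in\mathcal R(s,y)}\mu_{x,y}c_x\bigr)\), there are three possible origins: \(c_z\) may be the \(c_y\) with \(y=z\), which requires \(z\in\mathcal R'(s,w)\) and gives a contribution \(q^2q_{z,w}\); it may be the \(c_{sy}\) with \(y=sz\), which happens precisely when \(s\in\SD(z)\) and \(sz<w\) (here \(z\in\mathscr I\) ensures \(sz\in\mathscr I\) and \(s\in\SA(sz)\), so that \(c_{sy}\) is a genuine basis vector) and gives a contribution \(q\,q_{sz,w}\); and it may be one of the \(c_x\) with \(x=z\), which requires \(s\in\D(z)\) and happens for each \(y\in\mathcal R'(s,w)\) with \(z<y<w\), giving contributions \(q\,\mu_{z,y}q_{y,w}\). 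Assembling the coefficient equation in each of the three cases now yields the stated formulas: case (i) (\(s\in\A(z)\)) reduces to \(q\,q_{z,sw}=q^2q_{z,w}\); case (ii) (\(s\in\SD(z)\)) reduces to \(q\,q_{z,sw}+(q_{z,w}-\mu_{z,w})=q\,q_{sz,w}+q\sum_{y}\mu_{z,y}q_{y,w}\), the sum over \(y\in\mathscr I\) with \(z<y<w\) and \(s\notin\D(y)\); and case (iii) (\(s\in\WD(z)\)) reduces to the same equation with the \(q\,q_{sz,w}\) term deleted, since then \(sz\notin\mathscr I\). Each of these rearranges at once to the corresponding formula in the statement (after renaming the summation index \(y\) to \(x\), and noting that in case (ii) the ``\(=0\)'' convention accounts for the possibility \(z\not<w\)).

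Since \eqref{recformula} has already done the structural work, no new idea is required, and the only place where care is genuinely needed --- the step I would flag as the main, albeit mild, pitfall --- is the bookkeeping just described. One must apply the conventions ``\(c_{sy}=0\) if \(s\in\WA(y)\)'' and ``\(q_{z,w}=\mu_{z,w}=0\) if \(z\not<w\)'' consistently, translate correctly between conditions on \(z\) and conditions on \(sz\) (notably that \(s\in\SA(sz)\) is equivalent to \(s\in\SD(z)\) together with \(z\in\mathscr I\), and that \(s\in\WD(z)\) forces \(sz\notin\mathscr I\)), and make sure each possible occurrence of \(c_z\) is counted exactly once. With that accounting in place the three displayed identities are immediate rearrangements of the coefficient equations.
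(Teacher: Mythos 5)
Your proof is correct and coincides with the paper's own argument: the paper likewise derives the corollary by equating, basis vector by basis vector, the coefficients of each \(c_z\) on the two sides of \eqref{recformula}, using Lemma~\ref{lifting1} for exactly the same comparisons between \(z<w\), \(z<sw\) and \(sz<w\), and invoking the conventions \(c_{sy}=0\) for \(s\in\WA(y)\) and \(q_{z,w}=\mu_{z,w}=0\) for \(z\not<w\) in the same places. No discrepancies to report.
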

The following result follows easily from Corollary~\ref{recursion}
by induction on~\(l(w)-l(y)\).
\begin{prop}\label{qdegree}
Let \(y<w\in\mathscr{I}\!\). Then the degree of \(q_{y,w}\) is at
most~\(l(w)-l(y)-1\).
\end{prop}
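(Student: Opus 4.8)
The plan is a degree count off the recursion of Corollary~\ref{recursion}, organised as an induction on $l(w)$; inducting instead on the difference $l(w)-l(y)$ does not quite work, for the reason explained at the end. So fix $y<w\in\mathscr I$ with $w\ne 1$ (the case $w=1$ is vacuous), choose $s\in S$ with $v:=sw<w$, and note that $v\in\mathscr I$ and $v<sv=w$. Applying Corollary~\ref{recursion} with $v$ and $w=sv$ in the roles of its ``$w$'' and ``$sw$'' (and the same ``$y$'') expresses $q_{y,w}$ in terms of polynomials $q_{x,v}$ for various $x$, together with integer constants; since $l(v)=l(w)-1$, the inductive bound $\deg q_{x,v}\le l(v)-l(x)-1$ is available for every $q_{x,v}$ appearing, and any $q_{x,v}$ with $x\not<v$ is $0$ by convention.

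If $y=v$ then $q_{y,w}=1$ and $l(w)-l(y)-1=0$, so the bound holds. If $y\ne v$ and $s\in\A(y)$, then $y<v$ (by Lemma~\ref{lifting1}, since $y<w=sv$ and $s\notin\D(y)$) and $q_{y,w}=q\,q_{y,v}$, so $\deg q_{y,w}\le 1+(l(v)-l(y)-1)=l(w)-l(y)-1$. The remaining cases are $s\in\SD(y)$ and $s\in\WD(y)$, handled together, the latter being the former with the $q_{sy,v}$ term deleted; for $s\in\SD(y)$,
\[
q_{y,w}=-q^{-1}(q_{y,v}-\mu_{y,v})+q_{sy,v}+\sum_{x}\mu_{y,x}\,q_{x,v}
\]
with the sum over $x\in\mathscr I$ such that $y<x<v$, and I would bound the three contributions in turn. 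The polynomial $q_{y,v}-\mu_{y,v}$ has zero constant term, hence is divisible by~$q$, so $-q^{-1}(q_{y,v}-\mu_{y,v})$ lies in $\mathcal A^{+}$ and has degree $\le l(v)-l(y)-2$. Each $\mu_{y,x}\,q_{x,v}$ has degree $\le l(v)-l(x)-1\le l(v)-l(y)-2$, since $l(x)>l(y)$. And $q_{sy,v}$, for which $l(sy)=l(y)-1$, has degree $\le l(v)-l(sy)-1=l(w)-l(y)-1$. Taking the maximum gives $\deg q_{y,w}\le l(w)-l(y)-1$, which completes the induction.

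I do not expect a real obstacle; this is bookkeeping on a recursion already analysed in the paper. The one point needing care, and the reason for inducting on $l(w)$ rather than on $l(w)-l(y)$, is the term $q_{sy,v}$: it has the \emph{same} index difference as $q_{y,w}$, namely $l(v)-l(sy)=l(w)-l(y)$, so an induction on that difference would fail to descend there, whereas an induction on the length of the larger index is fine since $l(v)<l(w)$. It is also worth recording explicitly that $q_{y,v}-\mu_{y,v}$ is divisible by~$q$, which is precisely what keeps $-q^{-1}(q_{y,v}-\mu_{y,v})$ inside $\mathcal A^{+}$ and of degree $\le l(v)-l(y)-2$.
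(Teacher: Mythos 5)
Your proof is correct, and it is essentially the argument the paper intends: the paper offers no written proof beyond the remark that the statement ``follows easily from Corollary~\ref{recursion} by induction on \(l(w)-l(y)\)'', and your argument is exactly the degree count off that recursion, with every estimate checking out (including the divisibility of \(q_{y,v}-\mu_{y,v}\) by \(q\) and the bound \(\deg q_{sy,v}\le l(v)-l(sy)-1=l(w)-l(y)-1\)). Your one point of divergence is also well taken: as literally stated, induction on the difference \(l(w)-l(y)\) does not descend at the term \(q_{sy,v}\) in case (ii), since \(l(v)-l(sy)=l(w)-l(y)\); inducting on \(l(w)\) (or, equivalently, lexicographically on \((l(w),\,\cdot\,)\)) repairs this, and your choice is the clean way to organise the bookkeeping.
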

Now let \(\mu\colon C \times C \rightarrow \mathbb{Z}\) be given
by
\begin{equation}\label{W-graphCoefficients}
\mu(c_{y},c_{w}) =
                     \begin{cases}
                         \mu_{y,w} &\text{if $y < w$}\\
                         \mu_{w,y} &\text{if $w < y$}\\
                         0 &\text{otherwise},
                     \end{cases}
\end{equation}
and let \(\tau\) from \(C\) to the power set of~\(S\) be given by
\(\tau(c_{w}) = \D(w)\) for all~\(y\in\mathscr{I}\!\).
\begin{thr}\label{main-wg}
The triple \((C, \mu, \tau)\) is a \(W\!\)-graph.
\end{thr}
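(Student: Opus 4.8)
The plan is to exploit the fact that, by Lemma~\ref{uniCbasis1}, \(\mathscr{S}\) \emph{is} the free \(\mathcal{A}\)-module on the basis \(C=(\,c_w\mid w\in\mathscr{I}\,)\) and already carries an \(\mathcal{H}\)-module structure. Since the formulas \eqref{wgraphdef} determine the action of each generator \(T_s\) uniquely, it suffices to check that this existing action agrees with \eqref{wgraphdef} for the \(\mu\) of \eqref{W-graphCoefficients} and \(\tau(c_w)=\D(w)\). So fix \(s\in S\) and \(w\in\mathscr{I}\) and compare with Theorem~\ref{main1}. If \(s\in\D(w)=\tau(c_w)\), then Theorem~\ref{main1} gives \(T_sc_w=-q^{-1}c_w\), which is the first line of \eqref{wgraphdef}. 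If \(s\notin\D(w)\), expand the right side of \eqref{wgraphdef} and split the sum \(\sum_{\{z\in\mathscr I\mid s\in\D(z)\}}\mu(c_z,c_w)c_z\) according to whether \(z<w\), \(z>w\), or \(z\) is incomparable with \(w\). The incomparable terms vanish; the terms with \(z<w\) contribute exactly \(\sum_{y\in\mathcal R(s,w)}\mu_{y,w}c_y\) (using \(\mathcal R(s,w)=\{\,y\in\mathscr I\mid y<w,\ s\in\D(y)\,\}\) and \(\mu(c_y,c_w)=\mu_{y,w}\) there), which matches the sum appearing in Theorem~\ref{main1}. Hence, given Theorem~\ref{main1}, the proof reduces to showing
\[
\sum_{\substack{z\in\mathscr I,\ z>w\\ s\in\D(z)}}\mu_{w,z}\,c_z=
\begin{cases}0&\text{if }s\in\WA(w),\\ c_{sw}&\text{if }s\in\SA(w).\end{cases}
\]

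The heart of the matter is the following claim: if \(w<z\) in \(\mathscr{I}\), \(s\in\D(z)\), and \(s\notin\D(w)\), then \(\mu_{w,z}=0\) unless \(z=sw\), in which case \(\mu_{w,z}=1\). I would prove this by splitting \(\D(z)=\SD(z)\cup\WD_J(z)\). If \(s\in\SD(z)\), set \(v=sz\); then \(v<sv=z\), and \(v\in\mathscr{I}\) because \(\mathscr{I}\) is an ideal in \((W,\le_L)\) and \(v\le_L z\). Apply Corollary~\ref{recursion} with \(v\) in place of its ``\(w\)'' and \(w\) in place of its ``\(y\)''; since \(w<sv=z\) and \(s\in\A(w)\), either \(w=v\) --- in which case \(z=sw\) and \(q_{w,z}=1\), so \(\mu_{w,z}=1\) --- or \(w\ne v\), in which case part~(i) gives \(q_{w,z}=qq_{w,v}\) (with \(q_{w,v}=0\) if \(w\not<v\)), which has zero constant term, so \(\mu_{w,z}=0\). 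If instead \(s\in\WD_J(z)\), then \(z\ne sw\) (since \(z=sw\) would force \(sz=w<z\), contradicting \(s\in\WD_J(z)\)), and the observation recorded in the proof of Theorem~\ref{main1} --- that whenever \(s\in\WD_J(x)\) the expansion \eqref{qpoly1} of \(b_x\) involves only \(c_y\) with \(s\in\D(y)\) --- applied with \(x=z\) and \(y=w\) gives \(q_{w,z}=0\), hence \(\mu_{w,z}=0\). This establishes the claim.

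Finally, the claim settles the two remaining cases of the displayed identity. If \(s\in\WA(w)\) then \(sw\notin\mathscr{I}\) by definition of \(\WA\), so no \(z\in\mathscr{I}\) with \(z>w\) equals \(sw\), and every term \(\mu_{w,z}c_z\) vanishes. If \(s\in\SA(w)\) then \(sw\in\mathscr{I}\) with \(sw>w\) and \(s(sw)=w<sw\), so \(s\in\SD(sw)\subseteq\D(sw)\); thus \(c_{sw}\) occurs in the sum, with coefficient \(\mu_{w,sw}=1\) by the claim, while every other eligible \(z\) has \(\mu_{w,z}=0\). Hence the sum equals \(c_{sw}\). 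Combined with Theorem~\ref{main1}, this verifies \eqref{wgraphdef} in every case, so \((C,\mu,\tau)\) is a \(W\!\)-graph. The main obstacle is the claim; within it, the subtler case is \(s\in\WD_J(z)\), which hinges on the parenthetical remark buried in the \(\WD\)-branch of the proof of Theorem~\ref{main1}, while the \(\SD(z)\) case is a direct appeal to the recursion of Corollary~\ref{recursion}.
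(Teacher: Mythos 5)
Your proposal is correct and follows essentially the same route as the paper: reduce via Theorem~\ref{main1} to showing that for \(w<z\) with \(s\in\D(z)\cap\A(w)\) one has \(\mu_{w,z}=0\) unless \(z=sw\) (where it is \(1\)), handling \(s\in\WD_J(z)\) by the observation recorded in the weak-descent case of Theorem~\ref{main1}'s proof and \(s\in\SD(z)\) by Corollary~\ref{recursion}(i) applied to \(v=sz\). The only difference is that you spell out the bookkeeping reduction to \eqref{wgraphdef} in more detail than the paper does.
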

\begin{proof}
In view of Theorem \ref{main1} it suffices to show that for all
\(w\in \mathscr I\) and \(s\in S\), if \(s\in \WA(w)\) then the
set \(\{\,y\in \mathscr I\mid s\in\D(y)\text{ and }\mu(c_y,c_w)\ne
0\,\}\) contains no elements \(y>w\), while if \(s\in\SA(w)\) then
the only such element is \(sw\), and \(\mu(c_{sw},c_w)=1\).

Accordingly, suppose that \(w<y\in \mathscr I\) with
\(\mu_{w,y}\ne0\), and suppose that \(s\in\D(y)\cap \A(w)\). As
noted in the proof of Theorem~\ref{main1}, if \(s\in\WD(y)\) then
\(q_{z,y}=0\) for all \(z<y\) with \(s\in \A(y)\); in particular,
\(q_{w,y}=0\), contradicting~\(\mu_{w,y}\ne0\). Hence
\(s\in\SD(y)\). Now define \(x=sy\), so that \(x<sx\in \mathscr
I\), and observe by Corollary~\ref{recursion}~(i) that
\(q_{w,sx}=qq_{w,x}\) if \(w\ne x\). Since this contradicts
\(\mu_{w,y}\ne 0\) we conclude that \(w=x\), and \(q_{w,sx}=1\),
by Corollary~\ref{recursion}. So \(y=sw\) and \(\mu_{w,sw}=1\), as
required.
\end{proof}

\begin{prop}\label{inversepolys}
Let the bases \(B=(b_w\mid w\in\mathscr I)\) and 
\(C=(c_w\mid w\in\mathscr I)\) be as in Lemma~\ref{uniCbasis1} above.
Then there exist polynomials \(p_{y,w}\in\mathcal{A}^+\) such
that \(c_{w} = b_{w} - q\sum_{y < w}p_{y,w}b_{y}\) for all
\(w\in\mathscr I\), and the constant term of \(p_{y,w}\)
is~\(\mu_{y,w}\).
\end{prop}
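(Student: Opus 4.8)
The plan is to invert the unitriangular change of basis $b_{w}=c_{w}+q\sum_{y<w}q_{y,w}c_{y}$ supplied by Lemma~\ref{uniCbasis1}, proceeding by induction on $l(w)$, and to keep track of coefficients carefully enough to see both that the resulting polynomials lie in $\mathcal{A}^{+}$ and that their constant terms are as claimed. As usual I would adopt the convention that $q_{y,w}=\mu_{y,w}=0$ and, inductively, $p_{y,w}=0$ whenever $y\in\mathscr I$ does not satisfy $y<w$ (and also when $y\notin\mathscr I$), so that all sums below may be written over $\{\,y\in\mathscr I\mid y<w\,\}$ without further comment.

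The base case $w=1$ is trivial, since $c_{1}=b_{1}$ with an empty sum. For the inductive step I would take $w\in\mathscr I$ with $w\ne1$ and assume the statement for every $z<w$; thus $c_{z}=b_{z}-q\sum_{y<z}p_{y,z}b_{y}$ with each $p_{y,z}\in\mathcal{A}^{+}$ of constant term $\mu_{y,z}$. Substituting these expressions into $b_{w}=c_{w}+q\sum_{z<w}q_{z,w}c_{z}$ and solving for $c_{w}$ gives
\[
c_{w}=b_{w}-q\sum_{z<w}q_{z,w}b_{z}+q^{2}\sum_{z<w}q_{z,w}\sum_{y<z}p_{y,z}b_{y}.
\]
Interchanging the order of summation in the last term and collecting the coefficient of each $b_{y}$ with $y<w$, this matches the desired shape $c_{w}=b_{w}-q\sum_{y<w}p_{y,w}b_{y}$ precisely when one sets
\[
p_{y,w}=q_{y,w}-q\sum_{y<z<w}q_{z,w}\,p_{y,z},
\]
with $z$ ranging over $\mathscr I$.

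It then remains only to verify the two asserted properties. Since $q_{y,w}$ and $q_{z,w}$ lie in $\mathcal{A}^{+}$ by Lemma~\ref{uniCbasis1} and each $p_{y,z}$ lies in $\mathcal{A}^{+}$ by the inductive hypothesis, the displayed formula exhibits $p_{y,w}$ as a $\mathbb{Z}[q]$-combination of elements of $\mathcal{A}^{+}$, hence $p_{y,w}\in\mathcal{A}^{+}$. Moreover the correction term $q\sum_{y<z<w}q_{z,w}p_{y,z}$ is divisible by $q$, so it contributes nothing to the constant term; therefore the constant term of $p_{y,w}$ equals that of $q_{y,w}$, which is $\mu_{y,w}$ by the definition of $\mu_{y,w}$ fixed after Lemma~\ref{uniCbasis1}. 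This completes the induction, and the proposition follows.

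There is no genuine obstacle here — the argument is in essence a formal inversion of a unitriangular matrix over $\mathcal{A}^{+}$ — and the only point that repays a moment's attention is the passage from the double sum over pairs $y<z<w$ to a single sum over $y<w$, together with the observation that this cross term carries an explicit factor of $q$; it is exactly that factor that forces the constant term of $p_{y,w}$ to be inherited from $q_{y,w}$ alone.
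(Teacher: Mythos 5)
Your proposal is correct and is essentially the paper's own argument: the paper simply records the same recursion \(p_{y,w}=q_{y,w}-\sum_{y<x<w}qp_{y,x}q_{x,w}\) obtained by inverting \eqref{qpoly1}, and draws the same conclusion that the explicit factor of \(q\) in the correction term forces the constant term of \(p_{y,w}\) to equal that of \(q_{y,w}\), namely \(\mu_{y,w}\). Your write-up just carries out the induction and the interchange of summation explicitly.
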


\begin{proof}
It follows readily from Equation \ref{qpoly1} that the required
polynomials \(p_{y,w}\) are given recursively by
\begin{equation}\label{pdashpoly1}
p_{y,w} = q_{y,w} - \sum_{y<x<w}qp_{y,x}q_{x,w} \quad \text{if $y
< w$},
\end{equation}
whence the constant term of \(p_{y,w}\) equals
that of \(q_{y,w}\) .
\end{proof}
For our final theoretical result of this section, we show that if
\(\mathscr I\) is a \(W\!\)-graph ideal that is generated by a
single (\(W\!\)-graph determining) element, then in Part~(i) of
Definition~\ref{wgphdetelt}, in the case \(s\in\WA_J(w)\), the sum
\(\sum_{y \in \mathscr I\!\!,\,y < sw} r^s_{y,w}b_{y}\)
can be replaced by the simpler \(\sum_{y \in \mathscr I\!\!,\,y < w} r^s_{y,w}b_{y}\).

\begin{lem}\label{bruhat-technical}
Suppose that \(x,\,y,\,v,\,w\in W\) and \(s\in S\) satisfy
\begin{enumerate}
\item \(xy=vw\) and \(l(xy)=l(x)+l(y)=l(v)+l(w)\),
\item \(sw>w\) and \(vs>v\),
\item \(y\le sw\).
\end{enumerate}
Then \(y\le w\).
\end {lem}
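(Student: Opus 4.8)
\emph{Overall strategy.} The plan is to reduce the whole statement, by a single application of Lemma~\ref{lifting1}, to the assertion that $l(sy)>l(y)$, and then to prove that assertion by induction on $l(vw)$, peeling a right descent off $w$. Indeed, since $s\in S$ we have $l(sy)=l(y)\pm 1$; and if $l(sy)>l(y)$, i.e.\ $y<sy$, then (as also $w<sw$) Lemma~\ref{lifting1}, applied with $u=y$, gives that $y\le w$ if and only if $y\le sw$, so hypothesis~(3) yields $y\le w$. Hence it suffices to rule out $l(sy)<l(y)$ — and in fact I would phrase the induction so as to prove directly that hypotheses (1)--(3) force $y\le w$, handling the subcase $l(sy)<l(y)$ by a reduction.

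\emph{Setting up the awkward subcase.} Suppose $l(sy)<l(y)$, so that $y=s\cdot(sy)$ with $l(y)=l(sy)+1$. Writing $xy=(xs)(sy)$ and comparing lengths with $l(xy)=l(x)+l(y)$ forces $l(xs)=l(x)+1$ and $l(xy)=l(xs)+l(sy)$, so $(xs)(sy)=vw$ is again a factorization with lengths adding. Next, $sy<s(sy)=y$ and $w<sw$, so Lemma~\ref{lifting1} with $u=sy$ shows $sy\le w$ if and only if $y\le sw$; thus hypothesis~(3) gives $sy\le w$. Finally I would dispose of the possibility $sy=w$: then $y=sw$, hence $l(v)=l(vw)-l(w)=l(x)+1$, while $xy=vw$ with $y=sw$ forces $xs=v$, so that $vs=x$ has length $l(v)-1<l(v)$, contradicting $vs>v$. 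So we may assume $sy<w$; in particular $w\neq 1$.

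\emph{The inductive reduction.} Now induct on $l(vw)$ (the case $vw=1$ being vacuous). Pick $r\in S$ with $wr<w$, and put $w_{1}=wr$; using $sw>w$ one checks that $(sw)r<sw$ and $sw_{1}>w_{1}$, and since $vw$ is reduced $r$ is also a right descent of $vw$, so $vw_{1}=(vw)r$ is reduced with $l(vw_{1})=l(vw)-1$. If moreover $r$ is a right descent of $y$, then $xyr=vw_{1}$ with $l(xyr)=l(x)+l(yr)=l(vw_{1})$; and from $y\le sw$, using the right-handed form of Lemma~\ref{lifting1} (obtained by applying Lemma~\ref{lifting1} through the length- and order-preserving anti-automorphism $g\mapsto g^{-1}$), one gets $yr\le sw_{1}$. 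Thus $(x,\,yr,\,v,\,w_{1})$ satisfies (1)--(3) with $l(vw_{1})<l(vw)$, so by induction $yr\le w_{1}$, and then (since $yr<(yr)r=y$ and $w_{1}<w_{1}r=w$) the right-handed Lemma~\ref{lifting1} lifts this back to $y\le w$, as wanted. The same reduction, using a right descent of $w$ that also lies in $\mathrm{D}_{R}(sy)$, works equally well.

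\emph{The main obstacle.} What remains — and this is the crux — is the case in which no right descent of $w$ is a right descent of $y$ (equivalently, since $\mathrm{D}_{R}(sy)\subseteq\mathrm{D}_{R}(y)$, none is a right descent of $sy$ either). Here, for each right descent $r$ of $w$ we have $yr>y$, so $y\le sw$ lifts instead to $y\le (sw)r=sw_{1}$, with $sw_{1}>w_{1}=wr$; so one can keep shrinking $w$ while preserving the relation ``$y\le s\,(\text{current }w)$''. I expect the argument to go through by iterating this, combined with the fact that $(xs)((sy)r)=vw_{1}$ is then a \emph{non}-reduced product, whence $\mathrm{D}_{R}(xs)\cap\mathrm{D}_{L}((sy)r)\neq\varnothing$; chasing the resulting common reflection through the reduced factorization $(xs)(sy)=vw$ and using $vs>v$ should either produce a right descent common to $w$ and $y$ (reducing to the previous paragraph) or drive the process down to the situation $sy=w$, already excluded. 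Making this last step clean — i.e.\ controlling how the right-descent data of $x$, $sy$, $v$ and $w$ interact inside the two reduced factorizations of $vw$ — is where the real work lies.
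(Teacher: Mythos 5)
Your first two steps are sound and coincide with the paper's: reducing to the case $sy<y$ via Lemma~\ref{lifting1}, and, in the induction, handling the case where the chosen right descent $r$ of $w$ is also a right descent of $y$ by passing to the quadruple $(x,\,yr,\,v,\,wr)$ and lifting back. But the case you leave as ``the main obstacle'' --- every right descent $r$ of $w$ is an ascent of $y$ --- is precisely where the one genuinely new idea of the proof is needed, and your sketch does not supply it. The paper's resolution is short: since $l(xy)=l(x)+l(y)$ and $r$ is a right descent of $vw=xy$ (because $l(vw)=l(v)+l(w)$ and $wr<w$), we have $l(xyr)<l(xy)$ while $yr>y$, so the Exchange Condition applied to the length-additive factorization $x\cdot y$ forces the deleted letter into the $x$-part: $xyr=x'y$ with $l(x')=l(x)-1$ and $l(x'y)=l(x')+l(y)=l(v)+l(wr)$. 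One then applies the inductive hypothesis to $(x',\,y,\,v,\,wr)$ --- shrinking $x$ and $w$ while keeping $y$ and $s$ --- and concludes $y\le wr\le w$. Your setup never shrinks $x$, which is why iterating ``shrink $w$ alone while preserving $y\le s\,(\text{current }w)$'' does not visibly converge to anything: the relation $y\le w$ you want is never re-entered by that loop.

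Moreover, the specific repair you propose rests on a false lemma: a non-length-additive product $ab$ need not have a simple reflection that is simultaneously a right descent of $a$ and a left descent of $b$ (in type $A_2$, take $a=b=s_1s_2$: the product has length $2<4$, yet the right descent of $a$ is $s_2$ and the left descent of $b$ is $s_1$). So the step ``$(xs)((sy)r)=vw_1$ is non-reduced, whence $xs$ and $(sy)r$ share a simple descent'' cannot be used as stated, and the subsequent descent-chasing is not carried out. As it stands the proposal proves the lemma only when some right descent of $w$ is a right descent of $y$ (plus the easy exclusions $sy=w$ and $l(w)=0$); the complementary case, which the paper settles with the Exchange Condition, remains a genuine gap.
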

\begin{proof} Assume that \(x,\,y,\,v,\,w\) and \(s\) satisfy the stated
hypotheses. If \(sy>y\) then the desired conclusion follows immediately from the
hypotheses \(y\le sw\) and \(sw>w\), by Lemma~\ref{lifting1}. So we may assume that
\(sy<y\). With this extra hypothesis, we use induction on \(l(w)\) to prove the
result.

If \(l(w)=0\) then the hypothesis (3) becomes \(y\le s\), and since \(sy<y\) it
follows that \(y=s\). So \(l(x)+l(y)=l(v)+l(w)\) becomes \(l(x)=l(v)-1\),
and \(xy=vw\) becomes \(xs=v\), which together contradict the hypothesis \(vs>v\).
So the result is vacuously true in this case.

Now suppose that \(l(w)>0\) and that the result holds in all cases corresponding to
shorter~\(w\). Choose \(r\in S\) such that \(w'=wr<w\). Note that since
\[
l(w')+1=l(w)<l(sw)=l(sw'r)\le l(sw')+1
\]
and also
\[
l(sw')\le l(w')+1=l(w)<l(sw)=l(sw'r)
\]
it follows that \(w'<sw'\) and \(sw'<sw'r\).

Suppose first that \(yr>y\). By hypothesis~(1),
\[
l(xyr)=l(vwr)=l(vw')\le l(v)+l(w')=l(v)+l(w)-1=l(xy)-1,
\]
and so \(xyr=x'y\) for some \(x'\) with \(l(x')=l(x)-1\), by the Exchange Condition.
Moreover, since \(sw'<sw'r=sw\) (proved above) and \(y<yr\), it follows from Lemma~\ref{lifting1}
and the hypothesis \(y<sw\) that \(y\le sw'\). So now we have
\begin{itemize}
\item[\((1')\)] \(x'y=vw'\) and \(l(x'y)=l(x')+l(y)=l(v)+l(w')\),
\item[\((2')\)] \(sw'>w'\) and \(vs>v\),
\item[\((3')\)] \(y\le sw'\),
\end{itemize}
and since \(l(w')<l(w)\) the inductive hypothesis gives \(y\le w'\). But \(w'<w\);
so \(y\le w\) in this case.

It remains to consider the case \(yr<y\). Put \(y'=yr\), and observe that
\[
l(xy')\le l(x)+l(y')=l(x)+l(y)-1=l(xy)-1\le l(xyr)=l(xy'),
\]
so that \(l(xy')=l(x)+l(y')\). The same argument gives \(l(vw')=l(v)+l(w')\).
And since \(sw'<sw'r=sw\) and \(y'<y'r=y\), it follows from the hypothesis
\(y<sw\) and Lemma~\ref{lifting1} that \(y'<sw'\). So now we have
\begin{itemize}
\item[\((1'')\)] \(x'y=vw'\) and \(l(xy')=l(x)+l(y')=l(v)+l(w')\),
\item[\((2'')\)] \(sw'>w'\) and \(vs>v\),
\item[\((3'')\)] \(y'\le sw'\),
\end{itemize}
and since \(l(w')<l(w)\) the inductive hypothesis gives \(y'\le w'\). Since
\(y'<y'r=y\) and \(w'<w'r=w\), this yields \(y<w\), by Lemma~\ref{lifting1}.
\end{proof}

\begin{prop}\label{simplerdef} Suppose that \(u\in W\) and
\(\mathscr I=\{\,w\in W\mid w\le_L u\,\}\) is a \(W\!\)-graph ideal
with respect to~\(J\). With all the notation as in Definition~\ref{wgphdetelt},
if \(w\in\mathscr I\) and \(s\in\WA_J(w)\), then every \(y\in\mathscr I\)
with \(y<sw\) satisfies \(y\le w\).
\end{prop}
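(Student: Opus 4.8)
The plan is to derive this from Lemma~\ref{bruhat-technical}, exploiting the fact that $\mathscr I$ is the \emph{principal} ideal generated by~$u$. Since $w\in\mathscr I$ and $y\in\mathscr I$ we have $w\le_L u$ and $y\le_L u$, so writing $v=uw^{-1}$ and $x=uy^{-1}$ we obtain factorizations $u=vw$ with $l(u)=l(v)+l(w)$ and $u=xy$ with $l(u)=l(x)+l(y)$. In particular $xy=u=vw$ with $l(xy)=l(x)+l(y)=l(v)+l(w)$ (all equal to $l(u)$), which is hypothesis~(1) of Lemma~\ref{bruhat-technical}, and $y<sw$ supplies hypothesis~(3). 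What remains is hypothesis~(2): here $sw>w$ is immediate from $s\in\WA_J(w)$, so the substantive point is to show $vs>v$.

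For this, first note that $s^2=1$ gives $vs=uw^{-1}s=u(sw)^{-1}$, so $l(vs)=l(u(sw)^{-1})$. Now $s\in\WA_J(w)$ forces $sw\in D_J\setminus\mathscr I$, and in particular $sw\notin\mathscr I$, i.e.\ $sw\not\le_L u$. By the definition of $\le_L$ this says $l(u(sw)^{-1})\ne l(u)-l(sw)$; combined with the standard inequality $l(u(sw)^{-1})\ge l(u)-l(sw)$ and the fact that $l(u(sw)^{-1})$ and $l(u)-l(sw)$ have the same parity (since length is additive modulo~$2$), we conclude $l(u(sw)^{-1})\ge l(u)-l(sw)+2$. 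Since $sw>w$ gives $l(sw)=l(w)+1$ and $l(v)=l(u)-l(w)$, this yields $l(vs)\ge l(u)-l(w)+1=l(v)+1$, so indeed $vs>v$.

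With all three hypotheses of Lemma~\ref{bruhat-technical} in place (applied with the lemma's $x,y,v,w,s$ taken to be the present $x,y,v,w,s$), we conclude $y\le w$, which is the assertion. The one step that requires care is the inequality $vs>v$: the parity argument is exactly what upgrades ``$sw\not\le_L u$'' from the weak bound $l(u(sw)^{-1})\ge l(u)-l(sw)+1$ to the bound $l(u(sw)^{-1})\ge l(u)-l(sw)+2$ that is actually needed, and it is precisely here that principality of $\mathscr I$ enters — for a general $W\!$-graph ideal one cannot produce the two length-additive factorizations of a common element $u$ that drive the argument.
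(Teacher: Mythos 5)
Your proof is correct and takes essentially the same route as the paper: both exploit principality to write \(u=xy=vw\) with \(l(u)=l(x)+l(y)=l(v)+l(w)\), establish \(vs>v\) from \(sw\notin\mathscr I\), and then apply Lemma~\ref{bruhat-technical}. The only difference is cosmetic: the paper gets \(vs>v\) by noting that \(vs<v\) would make \(sw\) a suffix of \(u\) (contradicting \(s\in\WA_J(w)\)), while you derive it from \(l(u(sw)^{-1})\ge l(u)-l(sw)\) plus a parity upgrade -- both amount to the same observation.
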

\begin{proof}
Suppose that \(w\in\mathscr I\) and \(s\in\WA_J(w)\), and that \(y\in\mathscr I\)
with \(y<sw\). Since \(w\) and \(y\) are in \(\mathscr I\) they are both suffixes
of~\(u\), and so there exist \(x,\,v\in W\) with \(u=xy=vw\) and
\(l(u)=l(x)+l(y)=l(v)+l(w)\). If \(v'=vs<v\) then \(u=(v's)w=v'(sw)\), showing
that \(sw\le_Lu\) since 
\[
l(u)=l(v)+l(w)=l(v')+1+l(w)=l(v')+l(sw).
\]
Since this contradicts the assumption that \(s\in\WA_J(w)\), it follows that
\(vs>v\), and hence all the hypotheses of Lemma~\ref{bruhat-technical} are
satisfied. So \(y\le w\), as required.
\end{proof}

The recursive nature of Corollary~\ref{recursion} makes
it relatively straightforward to implement calculation
of the polynomials \(q_{y,w}\) (and hence the \(W\!\)-graph
edge-weights~\(\mu_{y,w}\)) using a computational algebra program.
We outline one possible way to do this.

Assume that the elements of \(\mathscr I\) are listed as
\(w_1,w_2,\ldots,w_d\), where $i\le j$ implies that \(l(w_i)\le l(w_j)\),
and let  \(S=\{s_1,s_2,\ldots s_n\}\). The input to the process
is an array \texttt{tab} such that \texttt{tab[i,j] = j} if
\(s_i\) is a weak ascent of~\(w_j\) and \texttt{tab[i,j] = -j} if
\(s_i\) is a weak descent of~\(w_j\), while \texttt{tab[i,j] = k}
if \(s_i\) is a strong ascent or strong descent of \(w_j\)
and \(s_iw_j=w_k\). It is convenient to precompute another
array \texttt{descents} such that
\[
\texttt{descents[j] = \{ i | tab[i,j] < j \}}.
\]
We can now define a function \texttt{Q} such that \texttt{Q(j,k)}
returns the polynomial \(q_{y,z}\) if \(y=w_j<z=w_k\),
and returns~\(0\) otherwise.

If \texttt{j \(\ge\) k} then \texttt{Q(j,k)} immediately returns~0.
Otherwise the set \texttt{descents[k]} is searched for an \texttt{s} with
\texttt{tab[s,k] = m > 0}; note that since \texttt{m < k} the value of
\texttt{Q(j,m)} can be used in the calculation of \texttt{Q(j,k)}.
By (i) of Corollary~\ref{recursion}, \texttt{Q(j,k)} can be
set equal to \(q*{}\)\texttt{Q(j,m)} if \texttt{s} is not in
\texttt{descents[j]}. If \texttt{s} is in \texttt{descents[j]}
then (ii) of Corollary~\ref{recursion} is applicable if
\texttt{tab[s,j] > 0}, while (iii) is applicable if
\texttt{tab[s,j] < 0}. Interpreting \texttt{Q(tab[s,j],m)} as zero
in this latter case, the formula for \texttt{Q(j,k)} becomes
\[
\texttt{Q(j,k) = ((mu(j,m) - Q(j,m))}/q\texttt{) + Q(tab[s,j],m) + Sum}
\]
where \texttt{mu(j,m)} is the constant term of \texttt{Q(j,m)}, and
\texttt{Sum} denotes the sum of the values \texttt{mu(j,i) \(*\) Q(i,m)}
for \texttt{i} in the range \texttt{j < i < m}.

\section{The Kazhdan-Lusztig and Deodhar constructions}\label{klparrel}

Since every \(u\in W\) occurs as a suffix of the longest element
\(w_S\), the ideal of \((W,\le_L)\) generated by \(w_S\) is the
whole of~\(W\). We seek to show that \(w_S\) is a \(W\!\)-graph
determining element, or, equivalently, that \(W\) is a
\(W\!\)-graph ideal. We are forced to let \(J=\emptyset\) so that
the requirement \(W\subseteq D_J\) is satisfied, and this means
that the sets \(\WA_J(w)\) and \(\WD_J(w)\) are empty for all
\(w\in W\). Hence to show that \(w_S\) is a \(W\!\)-graph
determining element we need to produce an \(\mathcal{H}\)-module
with an \(\mathcal A\)-basis \((\,b_w\mid w\in W\,)\) such that
for all \(s\in S\) and \(w\in W\),
\[
T_sb_w=
\begin{cases}
  b_{sw}&\text{if \(sw>w\)}\\
  b_{sw}+(q-q^{-1})b_w&\text{if \(sw<w\).}
\end{cases}
\]
The module must also admit an \(\mathcal A\)-semilinear involution
such that \(\overline{T_wb_1}=\overline{T_w}b_1\) for all \(w\in
W\). Since these conditions are obviously satisfied if we put
\(b_w=T_w\) for all \(w\in W\), the required module is the left
regular module \(\mathcal{H}\). Thus our construction in
Section~\ref{section7} will produce a \(W\!\)-graph basis
of~\(\mathcal{H}\), and combining Propositions~\ref{qdegree}
and~\ref{inversepolys} yields the following result.
\begin{prop}\label{kltheorem}
The Hecke algebra \(\mathcal H\) has a \(W\!\)-graph basis
\((c_w\mid w\in W)\) such that \(\overline{c_w}=c_w\) and
\(c_w=T_w-\sum_{y<w}qp_{y,w}T_y\) for all \(w\in W\), where
\(p_{y,w}\) is a polynomial of degree at
most \(l(w)-l(y)-1\) and the W-graph edge-weight \(\mu_{y,w}\) is
the constant term of~\(p_{y,w}\).
\end{prop}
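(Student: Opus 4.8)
The plan is to observe that the paragraph immediately preceding the Proposition has already done most of the work, and that the statement is obtained by feeding the left regular module into the general machinery of Section~\ref{section7}, together with one short degree estimate.

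First I would record that \(W\) itself is a \(W\!\)-graph ideal with respect to \(J=\emptyset\), taking \(\mathscr S=\mathcal H\) and \(b_w=T_w\). Since \(D_\emptyset=W\), the sets \(\WA_{\emptyset}(w)\) and \(\WD_{\emptyset}(w)\) are empty for every \(w\in W\), so only the strong-ascent and strong-descent cases of Definition~\ref{wgphdetelt}(i) can occur; and the multiplication rule \(T_sT_w=T_{sw}\) when \(sw>w\), together with \(T_sT_w=T_{sw}+(q-q^{-1})T_w\) when \(sw<w\), is precisely what is demanded there. For (ii) one uses the standard bar involution of \(\mathcal H\): it is \(\mathcal A\)-semilinear, fixes \(T_1=b_1\), and is a ring homomorphism, so \(\overline{h\alpha}=\overline h\,\overline\alpha\) holds. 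Thus \(w_S\) is a \(W\!\)-graph determining element, as asserted before the Proposition.

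Next I would invoke the results already proved for a general \(W\!\)-graph ideal. Lemma~\ref{uniCbasis1} supplies the unique \(\mathcal A\)-basis \(C=(c_w\mid w\in W)\) with \(\overline{c_w}=c_w\) and \(b_w=c_w+q\sum_{y<w}q_{y,w}c_y\), \(q_{y,w}\in\mathcal A^{+}\); Theorem~\ref{main-wg} then gives that \((C,\mu,\tau)\) is a \(W\!\)-graph, with \(\mu(c_y,c_w)\) built from the constant terms \(\mu_{y,w}\) of the \(q_{y,w}\); and Proposition~\ref{inversepolys} inverts the triangular change of basis, producing \(p_{y,w}\in\mathcal A^{+}\) with \(c_w=b_w-q\sum_{y<w}p_{y,w}b_y=T_w-\sum_{y<w}qp_{y,w}T_y\) and with the constant term of \(p_{y,w}\) equal to that of \(q_{y,w}\), namely \(\mu_{y,w}\). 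This already yields every clause of the Proposition except the degree bound.

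The only remaining point — and the sole place where any computation is needed — is \(\deg p_{y,w}\le l(w)-l(y)-1\). I would prove this by induction on \(l(w)-l(y)\), using the recursion \(p_{y,w}=q_{y,w}-\sum_{y<x<w}qp_{y,x}q_{x,w}\) from \eqref{pdashpoly1}: Proposition~\ref{qdegree} gives \(\deg q_{y,w}\le l(w)-l(y)-1\) and \(\deg q_{x,w}\le l(w)-l(x)-1\), while the inductive hypothesis gives \(\deg p_{y,x}\le l(x)-l(y)-1\), so each term \(qp_{y,x}q_{x,w}\) has degree at most \(1+(l(x)-l(y)-1)+(l(w)-l(x)-1)=l(w)-l(y)-1\). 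I do not anticipate a genuine obstacle: the substantive content is entirely contained in the cited results, and the only care required is to confirm that the choice \(J=\emptyset\) renders the weak cases vacuous, so that \(\mathcal H\) with \(b_w=T_w\) is exactly the module required by Definition~\ref{wgphdetelt}.
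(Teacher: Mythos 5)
Your proposal is correct and follows essentially the same route as the paper: identify the left regular module with \(b_w=T_w\) as the module required by Definition~\ref{wgphdetelt} for \(J=\emptyset\), then invoke Lemma~\ref{uniCbasis1}, Theorem~\ref{main-wg}, Proposition~\ref{inversepolys} and Proposition~\ref{qdegree}. The only difference is that you spell out the small induction via \eqref{pdashpoly1} giving \(\deg p_{y,w}\le l(w)-l(y)-1\), which the paper leaves implicit in the phrase ``combining Propositions~\ref{qdegree} and~\ref{inversepolys}''.
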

Converting the traditional version of \(\mathcal H\) as used
in~\cite{kazlus:coxhecke} to our version requires replacing
\(q\) by \(q^2\), after which the \(T_w\) of \cite{kazlus:coxhecke}
becomes \(q^{l(w)}T_w\) in our context. So the formula in
\cite[Theorem 1.1]{kazlus:coxhecke}, when converted to our context,
becomes \(C_w=\sum_{y\le w}(-1)^{l(w)-l(y)}q^{l(w)-2l(y)}
P_{y,w}^*(q^{l(y)}T_y)\), where \(P_{y,w}^*\) is obtained from
the Kazhdan-Lusztig polynomial \(P_{y,w}\) by replacing \(q\)
by~\(q^{-2}\). Since \(P_{w,w}=1\) the coefficient of \(T_w\)
on the right hand side of this expression is~1, and since \(P_{y,w}\)
is a polynomial of degree at most \(\frac12(l(w)-l(y)-1)\) when
\(y<w\) we see that the coefficient of \(T_y\), namely
\((-q)^{l(w)-l(y)} P_{y,w}^*\), is a polynomial in
\(q\) with zero constant term. Since also \(\overline{C_w}=C_w\),
the uniqueness  part of Lemma~\ref{uniCbasis1} guarantees that 
\(C_w=c_w\), from which we can deduce a simple relationship between
our polynomials \(p_{y,w}\) and the Kazhdan-Lusztig polynomials.
\begin{prop}\label{polyrel}The polynomials \(p_{y,w}\) appearing
in Proposition~\ref{kltheorem} are related to the Kazhdan-Lusztig
polynomials \(P_{y,w}\) via
\begin{equation}\label{KLPrelation}
p_{y,w} = (-q)^{l(w) - l(y) - 1}P_{y,w}^*.
\end{equation}
where \(P_{y,w}^*\) is obtained from \(P_{y,w}\) by replacing \(q\)
by~\(q^{-2}\). In particular, the coefficient of 
\(q^{\frac12(l(w)-l(y)-1)}\) in \(P_{y,w}\) is
\((-1)^{l(w)-l(y)-1}\mu_{y,w}\).
\end{prop}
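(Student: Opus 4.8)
The plan is to identify the basis \((c_w\mid w\in W)\) of Proposition~\ref{kltheorem} with the Kazhdan--Lusztig basis \((C_w)\) of \cite{kazlus:coxhecke}, expressed in the present paper's normalization, and then read off \eqref{KLPrelation} by comparing coefficients of the \(T_y\). By Proposition~\ref{kltheorem}, applied with \(\mathscr S=\mathcal H\) and \(b_w=T_w\), we have \(c_w=T_w-q\sum_{y<w}p_{y,w}T_y\) with \(\overline{c_w}=c_w\), each \(p_{y,w}\in\mathcal A^{+}\) of degree at most \(l(w)-l(y)-1\) and with constant term \(\mu_{y,w}\). The discussion preceding the statement records that, after replacing \(q\) by \(q^2\) and the traditional basis element \(T_w\) by \(q^{l(w)}T_w\),
\[
C_w=\sum_{y\le w}(-1)^{l(w)-l(y)}q^{l(w)-2l(y)}P_{y,w}^{*}\,(q^{l(y)}T_y),
\]
where \(P_{y,w}^{*}\) denotes \(P_{y,w}\) with \(q\) replaced by \(q^{-2}\); so the coefficient of \(T_y\) in \(C_w\) is \((-q)^{l(w)-l(y)}P_{y,w}^{*}\), which equals \(1\) when \(y=w\) since \(P_{w,w}=1\).

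The key step is to prove \(C_w=c_w\), and for this I would invoke the uniqueness assertion of Lemma~\ref{uniCbasis1} (together with Proposition~\ref{inversepolys}): it says that \(c_w\) is the unique bar-invariant element of \(\mathcal H\) of the form \(T_w-q\sum_{y<w}(\text{element of }\mathcal A^{+})T_y\). Bar-invariance \(\overline{C_w}=C_w\) is the defining property of the Kazhdan--Lusztig basis, and it is preserved by the normalization change because that change commutes with \(a\mapsto\overline a\). For the triangular shape: writing \(P_{y,w}=\sum_i a_iq^i\), and using that \(P_{y,w}\) has constant term \(1\) and degree at most \(\tfrac12(l(w)-l(y)-1)\) when \(y<w\), the coefficient \(q^{l(w)-l(y)}P_{y,w}^{*}=\sum_i a_iq^{\,l(w)-l(y)-2i}\) of \(T_y\) has all its terms in degrees from \(1\) to \(l(w)-l(y)\), hence lies in \(q\mathcal A^{+}\). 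Thus \(C_w\) has exactly the required form, so \(C_w=c_w\); comparing the coefficients of \(T_y\) for \(y<w\) gives \(-qp_{y,w}=(-q)^{l(w)-l(y)}P_{y,w}^{*}\), which is \eqref{KLPrelation}.

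Finally I would extract the \(q^0\)-coefficient of \(p_{y,w}=(-q)^{l(w)-l(y)-1}P_{y,w}^{*}=(-1)^{l(w)-l(y)-1}\sum_i a_iq^{\,l(w)-l(y)-1-2i}\): this is nonzero only when \(l(w)-l(y)-1=2m\) is even, in which case it equals \((-1)^{l(w)-l(y)-1}a_m\) with \(m=\tfrac12(l(w)-l(y)-1)\) the top admissible degree of \(P_{y,w}\). Since \(\mu_{y,w}\) is by definition this constant term, and \((-1)^{l(w)-l(y)-1}\) is its own inverse, the coefficient of \(q^{\frac12(l(w)-l(y)-1)}\) in \(P_{y,w}\) equals \((-1)^{l(w)-l(y)-1}\mu_{y,w}\) (interpreted as \(0\) when \(l(w)-l(y)\) is even, consistently with \(\mu_{y,w}=0\) in that case).

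The one point needing care is the middle step --- checking that the renormalized \(C_w\) genuinely satisfies the two hypotheses of Lemma~\ref{uniCbasis1}, so that its uniqueness applies; here the degree bound on Kazhdan--Lusztig polynomials is precisely what forces the off-diagonal coefficients into \(q\mathcal A^{+}\), and everything else is bookkeeping with the substitution \(q\mapsto q^{-2}\).
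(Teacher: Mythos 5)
Your proposal is correct and follows essentially the same route as the paper: the paper's argument is exactly the paragraph preceding the proposition, which converts the Kazhdan--Lusztig basis to the present normalization, checks that the off-diagonal coefficients lie in \(q\mathcal A^{+}\) via the degree bound on \(P_{y,w}\), and invokes bar-invariance and the uniqueness part of Lemma~\ref{uniCbasis1} to conclude \(C_w=c_w\) before comparing coefficients. Your write-up just makes the degree bookkeeping and the extraction of the constant term slightly more explicit.
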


Note that Kazhdan and Lusztig show that \(\mu_{y,w}\ne 0\)
only if \(l(w)-l(y)-1\) is~even.

Turning now to Deodhar's construction, let \(J\) be an arbitrary
subset of~\(S\) and let \(d_J\) be the longest element of
\(D_{J}\) (which is the shortest element of \(w_SW_J\)). An
element \(u\in W\) is a suffix of \(d_J\) if and only if \(u\in
D_J\), and so the ideal \(\mathscr{I}\) of \((W,\le_L)\) generated
by \(d_J\) coincides with~\(D_J\). Clearly
\(\Pos(\mathscr{I})=J\). We shall show that \(\mathscr{I}=D_J\) is
a \(W\!\)-graph ideal with respect to~\(J\), and also that it is a
\(W\!\)-graph ideal with respect to~\(\emptyset\). We consider the
latter case first.

Since \(D_\emptyset=W\), it follows from the definitions in
Section~\ref{section5} that if \(w\in \mathscr{I}\) then
\(\SA(w)=\{\,s\in S\mid sw>w\text{ and }sw\in D_J\,\}\) and
\(\SD(w)=\{\,s\in S\mid sw<w\,\}\), while
\(\WD_\emptyset(w)=\{\,s\in S\mid sw\notin D_\emptyset\,\}=\emptyset\)
and
\[
\WA_\emptyset(w)=\{\,s\in S\mid sw\in D_\emptyset\setminus D_J\,\}\\
=\{\,s\in S\mid sw=wt\text{ for some }t\in J\,\}
\]
by Lemma~\ref{deo1}. We proceed to construct an
\(\mathcal{H}\)-module \(\mathscr{S}\) satisfying the requirements
of~Definition~\ref{wgphdetelt}. (Our module \(\mathscr{S}\) is
essentially the module \(M^J\) in~\cite{deo:paraKL}, in the case
\(u=q\), the only differences being due to our non-traditional
definition of~\(\mathcal{H}\).)

Let \(\mathcal{H}_J\) be the Hecke algebra associated with the
Coxeter system \((W_{J},J)\), and recall that \(\mathcal{H}_J\)
can be identified with the subalgebra of \(\mathcal{H}\) spanned
by \(\{\,T_u\mid u\in W_J\,\}\). There is an \(\mathcal
A\)-algebra homomorphism \(\psi\colon\mathcal{H}_J \to
\mathcal{A}\) such that \(\psi(T_u)=q^{l(u)}\) for all \(u\in
W_J\), and this can be used to give \(\mathcal{A}\) the structure
of an \(\mathcal{H}_J\)-module, which we denote by
\(\mathcal{A}_\psi\). Since \(\mathcal{H}\) is obviously an
\((\mathcal{H},\mathcal{H}_J)\)-bimodule, the tensor product
\(\mathscr{S}_{\!\psi} = \mathcal{H}\otimes_{\mathcal{H}_J}
\mathcal{A}_\psi\) is a (left) \(\mathcal{H}\)-module, and it is
straightforward to show that it is \(\mathcal A\)-free with basis
\(B=(\,b_w\mid w\in D_J\,)\) defined by \(b_w=T_w\otimes 1\) for
all~\(w\in D_J\).

Let \(w\in D_J\) and \(s\in S\). If \(s\in \SA(w)\) then
\(l(sw)>l(w)\), and so
\[
T_sb_w=T_s(T_w\otimes 1)=(T_sT_w)\otimes 1=T_{sw}\otimes 1=b_{sw}
\]
since \(sw\in D_J\). If \(s\in \SD(w)\) then \(l(sw)<l(w)\), and
so
\[
T_sb_w=(T_sT_w)\otimes 1=(T_{sw}+(q-q^{-1})T_w)\otimes
1=b_{sw}+(q-q^{-1})b_w
\]
since again \(sw\in D_J\). There are no weak descents, and if
\(s\in\WA_\emptyset(w)\) then there is a \(t\in J\) with
\(sw=wt\), and we find that
\[
T_sb_w=(T_sT_w)\otimes 1=(T_wT_t)\otimes
1=T_w\otimes\psi(T_t)=qb_w.
\]
So the action of the generators \(\{\,T_s\mid s\in S\,\}\) on the
basis \(B\) is in accordance with the requirements of
Definition~\ref{wgphdetelt}~(i) (with all the polynomials
\(r_{y,w}^s\) being zero), and it only remains to check that
\(\mathscr{S}_{\!\psi}\) admits an \(\mathcal{A}\)-semilinear
involution satisfying the requirements of
Definition~\ref{wgphdetelt}~(ii). We include a proof here for the
sake of completeness, although the result is proved
in~\cite{deo:paraKL}.

We show that the unique \(\mathcal{A}\)-semilinear map
\(\mathscr{S}_{\!\psi}\to\mathscr{S}_{\!\psi}\) satisfying
\(\overline{b_w}=\overline{T_w}\otimes 1\) for all \(w\in D_J\)
has the required properties. Note first that
\(\psi(\overline{T_u})=\psi(T_u)^{-1}=\overline{\psi(T_u)}\) for
all \(u\in W_J\). Now if \(x\in W\) is arbitrary then we may write
\(x=wu\) for some \(w\in D_J\) and some \(u\in W_J\), and we find
that
\begin{equation*}
\begin{split}
\overline{T_x\otimes
1}=&\overline{T_wT_u\otimes1}=\overline{T_w\otimes\psi(T_u)}
=\overline{\psi(T_u)(T_w\otimes 1)}=\overline{\psi(T_u)}(\overline{T_w\otimes 1})\\
&=\psi(\overline{T_u})(\overline{T_w}\otimes1)=
\overline{T_w}\otimes\psi(\overline{T_u})
=\overline{T_w}\,\overline{T_u}\otimes1
=\overline{T_wT_u}\otimes1=\overline{T_x}\otimes 1.
\end{split}
\end{equation*}
Hence \(\overline{k\otimes 1}=\overline{k}\otimes1\) for all
\(k\in\mathcal{H}\), and so
\[
\overline{h(k\otimes
1)}=\overline{(hk)\otimes1}=\overline{hk}\otimes1=(\overline{h}\,\overline{k})\otimes1
=\overline{h}(\overline{k\otimes 1})
\]
for all \(h,\,k\in\mathcal{H}\). So
\(\overline{h\alpha}=\overline{h}\overline{\alpha}\) for all
\(h\in\mathcal{H}\) and \(\alpha\in \mathscr{S}_{\!\psi}\), as
required.

Since the requirements of Definition~\ref{wgphdetelt} have all
been met, the construction in Section~\ref{section7} above
produces a \(W\!\)-graph basis in the
module~\(\mathscr{S}_{\!\psi}\). This basis corresponds to the
basis of \(M^J\) in Proposition 3.2 (iii) of \cite{deo:paraKL} (in
the case \(u=q\)). Deodhar's polynomials \(P_{y,w}^J\) and our
polynomials are related by the obvious modification of~\eqref{KLPrelation}
above.

\begin{prop}\label{deo-case1}
The \(\mathcal H\)-module \(\mathscr{S}_{\!\psi}\) has a \(W\!\)-graph
basis \((\,c_w\mid w\in D_J\,)\) such that \(\overline{c_w}=c_w\) and
\(c_w=b_w-\sum_{y<w}qp_{y,w}^Jb_y\) for all \(w\in W\), where
\(p_{y,w}^J\) is a polynomial of degree at
most \(l(w)-l(y)-1\) and the W-graph edge-weight \(\mu_{y,w}\) is
the constant term of~\(p_{y,w}^J\). The polynomials \(p_{y,w}^J\)
are related to Deodhar's polynomials \(P_{y,w}^J\) via
\begin{equation}\label{Deorelation}
p_{y,w}^J = (-q)^{l(w) - l(y) - 1}P_{y,w}^*.
\end{equation}
where \(P_{y,w}^*\) is obtained from \(P_{y,w}^J\) by replacing \(q\)
by~\(q^{-2}\).
\end{prop}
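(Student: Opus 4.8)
The plan is to assemble the general machinery of Section~\ref{section7} together with the explicit verification carried out in the paragraphs immediately above, and then to transfer the argument behind Proposition~\ref{polyrel} to the parabolic setting.

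First I would observe that the preceding discussion has shown that \(\mathscr{S}_{\!\psi}\) satisfies every hypothesis of Definition~\ref{wgphdetelt}, with \(\mathscr{I}=D_J\) and with the ``\(J\)'' of that definition taken to be~\(\emptyset\), all the polynomials \(r_{y,w}^s\) being zero. Hence \(D_J\) is a \(W\!\)-graph ideal with respect to~\(\emptyset\), and all the results of Section~\ref{section7} are available for the module \(\mathscr{S}_{\!\psi}\). In particular, Lemma~\ref{uniCbasis1} supplies the unique \(\mathcal{A}\)-basis \(C=(\,c_w\mid w\in D_J\,)\) with \(\overline{c_w}=c_w\) and \(b_w=c_w+q\sum_{y<w}q_{y,w}c_y\) for polynomials \(q_{y,w}\in\mathcal{A}^+\); Theorem~\ref{main-wg} shows that \((C,\mu,\tau)\) is a \(W\!\)-graph; and Proposition~\ref{inversepolys} produces polynomials \(p_{y,w}^J\in\mathcal{A}^+\) with \(c_w=b_w-q\sum_{y<w}p_{y,w}^Jb_y\) and with the constant term of \(p_{y,w}^J\) equal to~\(\mu_{y,w}\).

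Next I would establish the degree bound, exactly as Proposition~\ref{kltheorem} was deduced from Propositions~\ref{qdegree} and~\ref{inversepolys}: arguing by induction on \(l(w)-l(y)\) and using the recursion~\eqref{pdashpoly1}, namely \(p_{y,w}^J=q_{y,w}-\sum_{y<x<w}qp_{y,x}^Jq_{x,w}\), together with the estimate \(\deg q_{y,w}\le l(w)-l(y)-1\) of Proposition~\ref{qdegree}, one finds that \(\deg(qp_{y,x}^Jq_{x,w})\le l(w)-l(y)-1\) for each intermediate \(x\), and hence that \(\deg p_{y,w}^J\le l(w)-l(y)-1\).

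Finally, to obtain the relation with Deodhar's polynomials, I would convert Deodhar's module \(M^J\) and its canonical basis to our conventions (replacing \(q\) by \(q^2\) and the \(T_w\) of \cite{deo:paraKL} by \(q^{l(w)}T_w\)), precisely as was done for the Kazhdan-Lusztig basis in the passage preceding Proposition~\ref{polyrel}. Specialised to the case \(u=q\), Proposition~3.2(iii) of \cite{deo:paraKL} then yields a bar-invariant element \(C_w^J\), expressed as an \(\mathcal{A}\)-combination of the \(b_y\) in which the coefficient of \(b_w\) is~\(1\) and, for \(y<w\), the coefficient of \(b_y\) is \((-q)^{l(w)-l(y)-1}\) times the polynomial \(P_{y,w}^*\) obtained from \(P_{y,w}^J\) by replacing \(q\) by~\(q^{-2}\); in particular that coefficient lies in \(q\mathcal{A}^+\). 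By the uniqueness assertion in Lemma~\ref{uniCbasis1} it follows that \(C_w^J=c_w\), and comparing the coefficients of \(b_y\) in the two expressions for \(c_w\) gives~\eqref{Deorelation}, whence also the constant-term description of the edge-weights via~\eqref{W-graphCoefficients}. The only part of this requiring care is the bookkeeping involved in matching Deodhar's normalisation of \(C_w^J\) with ours across the substitution \(q\mapsto q^2\), \(T_w\mapsto q^{l(w)}T_w\); but this is identical to the bookkeeping already performed in the Kazhdan-Lusztig case, and carries over essentially verbatim.
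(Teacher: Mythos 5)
Your proposal is correct and follows essentially the same route as the paper, which treats the preceding verification of Definition~\ref{wgphdetelt} (with \(\mathscr{I}=D_J\), the relevant subset being \(\emptyset\) and all \(r^s_{y,w}=0\)) together with the machinery of Section~\ref{section7} and the conversion argument preceding Proposition~\ref{polyrel} as the entire proof. One bookkeeping slip: in the converted Deodhar element the coefficient of \(b_y\) is \((-q)^{l(w)-l(y)}P_{y,w}^*\) (that is, \(-q\,p_{y,w}^J\)), not \((-q)^{l(w)-l(y)-1}P_{y,w}^*\); with the correct exponent the degree bound \(\deg P_{y,w}^J\le\frac12\bigl(l(w)-l(y)-1\bigr)\) gives precisely the zero constant term needed to invoke the uniqueness in Lemma~\ref{uniCbasis1}, after which comparison of coefficients yields \eqref{Deorelation} as stated.
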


The proof that \(\mathscr{I}=D_J\) is a \(W\!\)-graph ideal with
respect to~\(J\) is very similar to the proof just given. We find
that
\begin{align*}
\SA(w)&=\{\,s\in S\mid sw>w\text{ and }sw\in D_J\,\},\\
\SD(w)&=\{\,s\in S\mid sw<w\,\},\\
\WA_J(w)&=\{\,s\in S\mid sw\in D_J\setminus D_J\,\}=\emptyset,\\
\noalign{\vskip-2pt\hbox{and}\vskip-2pt}
\WD_J(w)&=\{\,s\in S\mid sw\notin D_J\,\}\\
& =\{\,s\in S\mid sw=wt\text{ for some }t\in J\,\}.
\end{align*}
Thus the weak ascents of the previous case are now weak descents,
and vice versa. The corresponding \(\mathcal{H}\)-module is
\(\mathscr{S}_{\!\phi}=\mathcal{H}\otimes_{\mathcal{H}_J}\mathcal{A}_\phi\),
where \(\mathcal{A}_\phi\) is  \(\mathcal{A}\) made into an
\(\mathcal{H}_J\)-module via the homomorphism
\(\phi\colon\mathcal{H}_J\to\mathcal{A}\) that satisfies
\(\phi(T_u)=(-q)^{-l(u)}\) for all \(u\in W_J\). This corresponds
to \(M^J\) in \cite{deo:paraKL} in the case \(u=-1\). We again
define \(b_w=T_w\otimes 1\) for all~\(w\in D_J\), and this time we
find that
\[
T_sb_w=\begin{cases}
b_{sw}&\text{if \(w\in\SA(w)\)}\\
b_{sw}+(q-q^{-1})b_w&\text{if \(w\in\SD(w)\)}\\
-q^{-1}b_w&\text{if \(w\in\WD_J(w)\)}
\end{cases}
\]
in accordance with the requirements of
Definition~\ref{wgphdetelt}. The proof that
\(\mathscr{S}_{\!\phi}\) admits an \(\mathcal{A}\)-semilinear
involution with the required properties is exactly as in the
previous case. Again the \(W\!\)-graph basis given by our
construction is essentially the same as the basis of \(M^J\) in
Proposition 3.2 (iii) of \cite{deo:paraKL} (now in the case
\(u=-1\)).

\begin{prop}\label{deo-case2}
The \(\mathcal H\)-module \(\mathscr{S}_{\!\phi}\) has a \(W\!\)-graph
basis \((\,c_w\mid w\in D_J\,)\) such that \(\overline{c_w}=c_w\) and
\(c_w=b_w-\sum_{y<w}qp_{y,w}^Jb_y\) for all \(w\in W\), where
\(p_{y,w}^J\) is a polynomial of degree at
most \(l(w)-l(y)-1\) and the W-graph edge-weight \(\mu_{y,w}\) is
the constant term of~\(p_{y,w}^J\). The polynomials \(p_{y,w}^J\)
are related to Deodhar's polynomials \(P_{y,w}^J\) via
\begin{equation}\label{Deorelation2}
p_{y,w}^J = (-q)^{l(w) - l(y) - 1}P_{y,w}^*.
\end{equation}
where \(P_{y,w}^*\) is obtained from \(P_{y,w}^J\) by replacing \(q\)
by~\(q^{-2}\).
\end{prop}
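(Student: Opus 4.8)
The plan is to argue exactly as in the proof of Proposition~\ref{deo-case1}, since the discussion immediately preceding the statement has already carried out everything specific to the module $\mathscr{S}_{\!\phi}$. First I would record that the computation of $T_sb_w$ displayed above, together with the fact that $\mathscr{S}_{\!\phi}$ carries an $\mathcal{A}$-semilinear involution with $\overline{b_1}=b_1$ and $\overline{h\alpha}=\overline{h}\overline{\alpha}$ (proved verbatim as for $\mathscr{S}_{\!\psi}$, replacing $\psi$ by $\phi$ throughout and using $\phi(\overline{T_u})=\overline{\phi(T_u)}$ for $u\in W_J$), shows that $\mathscr{I}=D_J$ satisfies both hypotheses of Definition~\ref{wgphdetelt} relative to $J$, with all the polynomials $r^s_{y,w}$ equal to zero. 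Hence the construction of Section~\ref{section7} applies without change.

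Next I would invoke the results of Section~\ref{section7} in turn: Lemma~\ref{uniCbasis1} produces the unique $\mathcal{A}$-basis $C=(\,c_w\mid w\in D_J\,)$ with $\overline{c_w}=c_w$ and $b_w=c_w+q\sum_{y<w}q_{y,w}c_y$, $q_{y,w}\in\mathcal{A}^+$; Theorem~\ref{main-wg} shows that $(C,\mu,\tau)$ is a $W$-graph; and Proposition~\ref{inversepolys} inverts the triangular relation to give polynomials $p^J_{y,w}:=p_{y,w}\in\mathcal{A}^+$ with $c_w=b_w-q\sum_{y<w}p^J_{y,w}b_y$ and constant term of $p^J_{y,w}$ equal to $\mu_{y,w}$. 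The degree bound $\deg p^J_{y,w}\le l(w)-l(y)-1$ then follows by induction on $l(w)-l(y)$ from the recursion \eqref{pdashpoly1}, using Proposition~\ref{qdegree} for the $q_{y,w}$ appearing there: each summand $qp^J_{y,x}q_{x,w}$ has degree at most $1+(l(x)-l(y)-1)+(l(w)-l(x)-1)=l(w)-l(y)-1$, and $\deg q_{y,w}\le l(w)-l(y)-1$ as well.

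For the comparison with Deodhar's polynomials I would translate conventions exactly as was done for the Kazhdan-Lusztig case preceding Proposition~\ref{polyrel}: replacing $q$ by $q^2$ converts our $T_w$ into $q^{l(w)}T_w$ in the notation of \cite{deo:paraKL}, and the induced module $\mathscr{S}_{\!\phi}$ becomes Deodhar's $M^J$ in the case $u=-1$. Deodhar's Proposition 3.2(iii) exhibits a bar-invariant basis of $M^J$ expressed triangularly in the $b_w$ with coefficients built from the parabolic Kazhdan-Lusztig polynomials $P^J_{y,w}$; since $P^J_{w,w}=1$ and $P^J_{y,w}$ has degree at most $\frac12(l(w)-l(y)-1)$ for $y<w$, the coefficient of $b_y$, when converted to our setting, is $(-q)^{l(w)-l(y)-1}P^*_{y,w}\in q\mathcal{A}^+$, where $P^*_{y,w}$ is obtained from $P^J_{y,w}$ by replacing $q$ by $q^{-2}$. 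Thus Deodhar's basis has the defining properties of $C$ in Lemma~\ref{uniCbasis1}, and the uniqueness part of that lemma forces the two bases to coincide; reading off the coefficient of $b_y$ yields \eqref{Deorelation2}.

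I expect the only genuine obstacle to be the bookkeeping in this last step: keeping straight the passage between Deodhar's conventions (eigenvalues $q$ and $-1$, parameter $u$, traditional Hecke relations) and ours, and confirming that the specialization $u=-1$ of $M^J$ is precisely $\mathscr{S}_{\!\phi}$ rather than $\mathscr{S}_{\!\psi}$. Everything else is a direct application of machinery already established in Sections \ref{section5} and \ref{section7}, and the argument runs word-for-word parallel to the proof of Proposition~\ref{deo-case1}.
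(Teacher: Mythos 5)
Your proposal is correct and follows essentially the same route as the paper: the displayed action of the \(T_s\) on the \(b_w\) (where \(\WA_J(w)=\emptyset\), so no \(r^s_{y,w}\) arise) together with the involution argument carried over verbatim from the \(\psi\) case verifies Definition~\ref{wgphdetelt}, and then the Section~\ref{section7} machinery plus the same convention-translation and uniqueness argument used before Proposition~\ref{polyrel} identifies the resulting basis with Deodhar's basis of \(M^J\) at \(u=-1\), giving \eqref{Deorelation2}.
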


We remark that the above constructions are special cases of
a more general construction to be described in the next section.
If \(J\subseteq S\) and \(J=J_1\cup J_2\), where no element of \(J_1\)
is conjugate in \(W_J\) to any element of \(J_2\), then \(\mathcal{H}_J\)
has a one-dimensional module on whose basis element \(b_1\) the
generators \(T_s\) of \(\mathcal{H}_J\) act as follows:
\[
T_sb_1=\begin{cases}
-q^{-1}b_1&\text{if \(s\in J_1\),}\\
qb_1&\text{if \(s\in J_2\).}
\end{cases}
\]
Thus the subset of \(W_J\) consisting of the identity element alone
is a \(W_J\)-graph ideal with respect to~\(J_1\), with \(\D(1)=\WD(1)=J_1\)
and \(A(1)=\WA(1)=J_2\). By Theorem~\ref{indthr} below it follows that \(D_J\)
is a \(W\!\)-graph ideal with respect to~\(J_1\). Deodhar's two constructions
correspond to the cases \(J_1=\emptyset\) and \(J_1=J\).

\section{Induced \(W\!\)-graph ideals}\label{induced}

Let \(K \subseteq S\), and let \(\mathcal{H}_K\) be the Hecke
algebra associated with the Coxeter system \((W_{K},K)\),
identified with a subalgebra of~\(\mathcal{H}\) as in
Section~\ref{klparrel} above. Suppose that
\(\mathscr{I}_{0}\subseteq W_K\) is a \(W_K\)-graph ideal with
respect to \(J \subseteq K\), and let
\(\mathscr{S}_{0}=\mathscr{S}(\mathscr{I}_{0},J)\) be the
corresponding \(\mathcal{H}_K\)-module. Thus \(\mathscr{S}_{0}\)
has an \(\mathcal{A}\)-basis \((\,b_{z}^{0} \mid z \in \mathscr
I_{0}\,)\) such that for all \(t\in K\) and
\(z\in\mathscr{I}_{0}\),
\begin{equation}\label{S_0action}
T_{t}b_{z}^{0} =
\begin{cases}
  b_{tz}^{0}  & \text{if \(t \in \SA(K,z)\),}\\
  b_{tz}^{0} + (q - q^{-1})b_{z}^{0} & \text{if \(t \in \SD(K,z)\),}\\
  -q^{-1}b_{z}^{0} & \text{if \(t \in \WD_{J}(K,z)\),}\\
  qb_{z}^{0} - \sum\limits_{\substack{y \in \mathscr I_0\\y < tz}} r^t_{y,z}b_{y}^{0} &
  \text{if \(t \in \WA_{J}(K,z)\),}
\end{cases}
\end{equation}
for some \(r^t_{y,z}\in q\mathcal{A}^+\), where the descent and
ascent sets are given by
\begin{align*}
\SA(K,z) &= \{\,t \in K \mid tz > z \text{ and } tz \in \mathscr I_0\,\},\\
\SD(K,z) &= \{\,t \in K \mid tz < z\,\},\\
\WA_{J}(K,z) &= \{\,t \in K \mid tz \notin \mathscr{I}_0\text{ and }z^{-1}tz\notin J\,\},\\
\WD_{J}(K,z) &= \{\,t \in K \mid tz \notin \mathscr{I}_0\text{ and
}z^{-1}tz\in J\,\}.
\end{align*}
Furthermore, \(\mathscr{S}_0\) admits an
\(\mathcal{A}\)-semilinear involution \(\alpha \mapsto
\overline{\alpha}\) satisfying \(\overline{b_1^0}=b_1^0\) and
\(\overline{h\alpha} =\overline{h}\overline{\alpha}\) for all
\(h\in \mathcal{H}_K\) and \(\alpha \in \mathscr{S}_0\).

We shall show that \(\mathscr I=D_{K}\mathscr I_0 = \{\,dz \mid d
\in D_{K}\text{ and }z\in \mathscr I_0\,\}\) is a \(W\!\)-graph
ideal with respect to~\(J\). The corresponding
\(\mathcal{H}\)-module \(\mathscr{S}(\mathscr{I},J)\) is
\(\mathscr{S}=\mathcal{H}\otimes_{\mathcal{H}_K}\mathscr{S}_0\).

\begin{lem}\label{invOrderIdeal}
The set \(\mathscr I\)  defined above is an ideal of \((W,
\leq_{L})\).
\end{lem}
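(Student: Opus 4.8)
The plan is to reduce the statement to a single-step closure condition and then case-split using Deodhar's Lemma~\ref{deo1}. An order ideal of $(W,\le_L)$ is precisely a subset $\mathscr I$ with the property that $w\in\mathscr I$ and $sw<w$ (for $s\in S$) together imply $sw\in\mathscr I$: indeed, if $u\le_L w$ then, writing $wu^{-1}$ as a reduced word $s_1s_2\cdots s_m$ with $m=l(w)-l(u)$, the elements $w$, $s_1w$, $s_2s_1w,\dots$ form a chain from $w$ down to $u$ in which each term is obtained from its predecessor by left multiplication by a simple reflection that decreases length, so repeated use of the single-step property yields $u\in\mathscr I$. Accordingly I shall fix $w\in\mathscr I$ and $s\in S$ with $sw<w$ and show that $sw\in\mathscr I$.

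Since $w\in\mathscr I=D_K\mathscr I_0$, write $w=dz$ with $d\in D_K$ and $z\in\mathscr I_0\subseteq W_K$; by uniqueness of distinguished coset representatives this is the canonical factorisation of $w$ across $W_K$, so $l(w)=l(d)+l(z)$. I then apply Lemma~\ref{deo1} to $s$ and $d$, with $K$ playing the role of ``$J$''. Option (ii) of that lemma (namely $l(sd)>l(d)$ and $sd\in D_K$) is impossible here, since it would give $sw=(sd)z$ with $l(sw)=l(sd)+l(z)=l(w)+1$, contradicting $sw<w$. If option (i) holds, so that $l(sd)<l(d)$ and $sd\in D_K$, then $sw=(sd)z\in D_K\mathscr I_0=\mathscr I$ and we are done.

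It remains to treat option (iii), in which $t:=d^{-1}sd\in K$ and $l(sd)>l(d)$, so that $sd=dt$. Since $t\in K$ and $z\in W_K$ we have $tz\in W_K$, and since $d\in D_K$ we obtain $l(sw)=l(dtz)=l(d)+l(tz)$. Comparing this with $l(w)=l(d)+l(z)$, the hypothesis $sw<w$ forces $l(tz)<l(z)$, and as $tz$ differs from $z$ by left multiplication by a single simple reflection this says precisely that $tz\le_L z$ in $W_K$. Because $\mathscr I_0$ is an ideal of $(W_K,\le_L)$ — this is part of what it means for $\mathscr I_0$ to be a $W_K$-graph ideal, and the weak order on $W_K$ is the restriction of that on $W$ — we conclude $tz\in\mathscr I_0$, and hence $sw=d(tz)\in D_K\mathscr I_0=\mathscr I$, completing the argument.

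The only delicate point is the length bookkeeping in option (iii): one must use that $d\in D_K$ makes $l(d\cdot x)=l(d)+l(x)$ for every $x\in W_K$, so that decreasing the length of $w=dz$ by the left action of $s$ is equivalent to passing from the $W_K$-component $z$ to the shorter element $tz$; once this is in place, the ideal property of $\mathscr I_0$ finishes the job, and everything else is a direct appeal to Lemma~\ref{deo1}.
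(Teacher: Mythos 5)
Your proof is correct and follows essentially the same route as the paper: reduce to single-step closure under length-decreasing left multiplication, factor $w=dz$ with $d\in D_K$ and $z\in\mathscr I_0$, apply Lemma~\ref{deo1} to $s$ and $d$, and in the case $sd=dt$ with $t\in K$ use the length additivity $l(dx)=l(d)+l(x)$ to get $l(tz)<l(z)$ and then invoke the ideal property of $\mathscr I_0$. The only cosmetic difference is that you rule out case (ii) of Lemma~\ref{deo1} as contradicting $sw<w$, whereas the paper treats both $sd\in D_K$ cases uniformly; this changes nothing.
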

\begin{proof}
In view of Definition~\ref{leftorder}, it suffices to show that
\(sw\in D_{K}\mathscr I_0\) whenever \(s\in S\) and \(w\in
D_{K}\mathscr I_0\) satisfy \(l(sw)<l(w)\).

Let \(w=dz\), where \(d\in D_{K}\) and \(z\in\mathscr I_0\). Let
\(s\in S\), and suppose that \(l(sw)<l(w)\). If \(sd\in D_K\) then
trivially \(sw=(sd)z\in D_{K}\mathscr I_0\). Now suppose that
\(sd\notin D_K\). By Lemma \ref{deo1} this gives \(sd=dt\) for
some \(t\in K\), and since \(z\in \mathscr{I}_0\subseteq W_K\) we
see that \(tz\in W_K\). Hence, since \(d\in D_K\),
\[
l(tz)=l(dtz)-l(d)=l(sdz)-l(d)=l(sw)-l(d)<l(w)-l(d)=l(dz)-l(d)=l(z).
\]
Since \(t\in K\) and \(z\in\mathscr{I}_0\), and \(\mathscr{I}_0\)
is an ideal of \((W_K,\le_L)\), it follows that
\(tz\in\mathscr{I}_0\). Hence \(sw=d(tz)\in D_{K}\mathscr I_0\) in
this case also, as required.
\end{proof}

For each \(w\in\mathscr I\) the sets of strong ascents, strong
descents, weak ascents and weak descents of~\(w\) relative to
\(\mathscr I\) and \(J\) are defined as in Section~\ref{section5}
above. Note that each \(w\in W\) is uniquely expressible as \(dz\)
with \(d\in D_K\) and \(z\in W_K\), and  \(w\in\mathscr I\) if and
only if \(z\in\mathscr{I}_0\). Moreover,
\[
\mathscr{S}=\bigoplus_{d\in
D_K}T_d\mathcal{H}_K\otimes_{\mathcal{H}_K}\mathscr{S}_0
=\bigoplus_{d\in D_K}T_d\otimes\mathscr{S}_0
\]
and it follows that \(\mathscr{S}\) is \(\mathcal A\)-free with
\(\mathcal A\)-basis \((\,T_d\otimes b_z^0\mid d\in D_K\text{ and
}z\in\mathscr{I}_0\,)\). We define \(b_w=T_d\otimes b_z^0\)
whenever \(w=dz\) as above, and proceed to show that for each
\(s\in S\) and \(w\in\mathscr{I}\) the generator \(T_s\) of
\(\mathcal H\) acts on the basis element \(b_w\) in accordance
with Definition~\ref{wgphdetelt}.

Let \(w=dz\), where \(d\in D_K\) and \(z\in \mathscr{I}_0\), and
let \(s\in\SA(w)\), so that \(w<sw\in\mathscr{I}\). Suppose first
that \(sd\notin D_K\), so that \(d<sd=dt\) for some \(t\in K\), by
Lemma~\ref{deo1}. Then \(tz\in W_K\), and since \(d(tz)=sw\in
D_K\mathscr{I}_0\), it follows that \(tz\) must be in
\(\mathscr{I}_0\). Moreover, since \(l(w)<l(sw)\),
\[
l(tz)=l(d(tz))-l(d)=l(sw)-l(d)>l(w)-l(d)=l(dz)-l(d)=l(z),
\]
and therefore \(t\in\SA(K,z)\). By \eqref{S_0action} above it
follows that
\[
T_sb_w=T_sT_d\otimes b_z^0=T_dT_t\otimes b_z^0=T_d\otimes
T_tb_z^0=T_d\otimes b_{tz}^0=b_{dtz}=b_{sw}
\]
in accordance with Definition~\ref{wgphdetelt}. It remains to show
that this same equation holds if \(sd\in D_K\), and in this case
we find that
\[
b_{sw}=b_{(sd)z}=T_{sd}\otimes b_z^0=T_sT_d\otimes b_z^0=T_sb_w,
\]
as required.

Suppose now that \(s\in\SD(w)\), where \(w=dz\) as above, so that
\(sw<w\). Suppose first that \(sd\notin D_K\), so that \(d<sd=dt\)
for some \(t\in K\), by Lemma~\ref{deo1}. Then \(tz\in W_K\), and
since \(l(w)<l(sw)\) it follows that
\[
l(tz)=l(d(tz))-l(d)=l(sw)-l(d)<l(w)-l(d)=l(dz)-l(d)=l(z),
\]
whence \(t\in\SD(K,z)\). By \eqref{S_0action},
\[
\begin{split}
T_sb_w&=T_sT_d\otimes b_z^0=T_dT_t\otimes b_z^0=T_d\otimes T_tb_z^0=T_d\otimes(b_{tz}^0+(q-q^{-1})b_z^0)\\
&\ =(T_d\otimes b_{tz}^0)+(q-q^{-1})(T_d\otimes
b_z^0)=b_{dtz}+(q-q^{-1})b_{dz}=b_{sw}+(q-q^{-1})b_{w}
\end{split}
\]
in accordance with Definition~\ref{wgphdetelt}. It remains to show
that this same equation holds if \(sd\in D_K\). In this case
\(b_{sw}=b_{(sd)z}=T_{sd}\otimes b_z^0\), and we also find that
\(l(sd)=l((sd)z)-l(z)=l(sw)-l(z)<l(w)-l(z)=l(dz)-l(z)=l(d)\). So
\[
\begin{split}
T_sb_w=T_sT_d&\otimes b_z^0=(T_{sd}+(q-q^{-1})T_d)\otimes b_z^0\\
&=(T_{sd}\otimes b_z^0)+(q-q^{-1})(T_d\otimes
b_z^0)=b_{sw}+(q-q^{-1})b_w
\end{split}
\]
as required.

Next, suppose that \(s\in\WD_J(w)\), where \(w=dz\) as above, so
that \(sw\notin\mathscr{I}\) and \(w^{-1}sw\in J\). Since
\(sw=(sd)z\) and \(z\in\mathscr{I}_0\), the fact that
\(sw\notin\mathscr{I}=D_K\mathscr{I}_0\) means that \(sd\notin
D_K\), and so \(sd=dt\) for some \(t\in K\), by Lemma~\ref{deo1}.
Moreover, \(z^{-1}tz=z^{-1}d^{-1}sdz=w^{-1}sw\in J\), so that
\(t\in\WD_J(K,z)\). By \eqref{S_0action},
\[
T_sb_w=T_sT_d\otimes b_z^0=T_dT_t\otimes b_z^0=T_d\otimes
T_tb_z^0=T_d\otimes (-q^{-1})b_{z}^0 =-q^{-1}b_{w}
\]
in accordance with Definition~\ref{wgphdetelt}.

Finally, suppose that \(s\in\WA_J(w)\), where \(w=dz\) as above,
so that \(sw\notin\mathscr{I}\) and \(w^{-1}sw\notin J\). As in
the preceding case it follows that \(sd\notin D_K\), and \(sd=dt\)
for some \(t\in K\), but now \(z^{-1}tz=w^{-1}sw\notin J\). So
\(t\in\WA_{J}(K,z)\), and by \eqref{S_0action} it follows that
\(T_tb_z^0=qb_{z}^{0} - \sum_y r^t_{y,z}b_{y}^{0}\) for some
polynomials \(r_{y,z}^t\in q\mathcal{A}^+\) (defined whenever
\(y<tz\) and \(y\in\mathscr{I}_0\)). Hence
\[
\begin{split}
T_sb_w=T_sT_d&\otimes b_z^0=T_dT_t\otimes b_z^0=T_d\otimes
T_tb_z^0
=T_d\otimes\bigl(qb_{z}^{0}-\sum_y r^t_{y,z}b_{y}^{0}\bigr)\\
&=q(T_d\otimes b_z^0)-\sum_y r^t_{y,z}(T_d\otimes b_{y}^{0})
=qb_{w}-\sum\limits_y r^t_{y,z}b_{dy}
\end{split}
\]
where the sums range over \(y\in\mathscr{I}_0\) such that
\(y<tz\). Since \(y\in\mathscr{I}_0\) and \(y<tz\) imply that
\(dy\in D_K\mathscr{I}_0=\mathscr{I}\) and \(dy<dtz=sw\) (by
Lemma~\ref{lifting1} and an induction on~\(l(d)\)), we conclude
that in this case also the requirements of
Definition~\ref{wgphdetelt} are satisfied.

To complete the proof that \(\mathscr I\) is a \(W\!\)-graph ideal
with respect to \(J\) it remains only to show that \(\mathscr S\)
admits a semilinear involution \(\alpha\mapsto\overline\alpha\)
such that \(\overline{h\alpha}=\overline{h}\overline{\alpha}\) for
all \(h\in\mathcal H\) and \(\alpha\in\mathscr S\!\). The proof is
very similar to the corresponding proofs in Section~\ref{klparrel}
above: we set \(\overline{T_d\otimes
b_z^0}=\overline{T_d}\otimes\overline{b_z^0}\) for all \(d\in
D_K\) and \(z\in\mathscr{I}_0\), using semilinearity to extend the
definition to the whole of~\(\mathscr{S}\). We omit further
details.

The discussion above enables us to state the following theorem.

\begin{thr}\label{indthr}
Let \(K \subseteq S\) and suppose that \(\mathscr{I}_{0}\subseteq
W_K\) is a \(W_K\)-graph ideal with respect to \(J \subseteq K\),
and let \(\mathscr{S}_{0}=\mathscr{S}(\mathscr{I}_{0},J)\) be the
corresponding
 \(\mathcal{H}_K\)-module. Then \(\mathscr{I}=D_K\mathscr{I}_0\)
is a \(W\!\)-graph ideal with respect to~\(J\), the corresponding
\(\mathcal{H}\)-module \(\mathscr{S}(\mathscr{I},J)\) being
isomorphic to \(\mathcal{H}\otimes_{\mathcal H_K}\mathscr{S}_0\).
\end{thr}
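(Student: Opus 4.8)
The statement collects what the discussion preceding it has essentially established, so the plan is to record that the module \(\mathscr{S}=\mathcal{H}\otimes_{\mathcal{H}_K}\mathscr{S}_0\) satisfies the three requirements of Definition~\ref{wgphdetelt}. First, Lemma~\ref{invOrderIdeal} shows \(\mathscr{I}=D_K\mathscr{I}_0\) is an ideal of \((W,\le_L)\); moreover \(\mathscr{I}_0\subseteq D_J\) inside \(W_K\) together with \(l(dzs)=l(d)+l(zs)\) for \(d\in D_K\), \(z\in W_K\), \(s\in J\subseteq K\) gives \(\mathscr{I}\subseteq D_J\), so the four sets \(\SA(w),\SD(w),\WA_J(w),\WD_J(w)\) partition \(S\) for every \(w\in\mathscr{I}\). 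Using the decomposition \(\mathcal{H}=\bigoplus_{d\in D_K}T_d\mathcal{H}_K\) of right \(\mathcal{H}_K\)-modules gives \(\mathscr{S}=\bigoplus_{d\in D_K}T_d\otimes\mathscr{S}_0\), so \(\mathscr{S}\) is \(\mathcal{A}\)-free with basis \((\,T_d\otimes b_z^0\mid d\in D_K,\ z\in\mathscr{I}_0\,)\); writing each \(w\in\mathscr{I}\) uniquely as \(w=dz\) with \(d\in D_K\), \(z\in\mathscr{I}_0\) and setting \(b_w=T_d\otimes b_z^0\) indexes this basis by \(\mathscr{I}\), as Definition~\ref{wgphdetelt}(i) demands.

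The substance is then the verification of the four action formulas of Definition~\ref{wgphdetelt}(i) for \(T_sb_w\) where \(w=dz\). I would split on whether \(sd\in D_K\). If \(sd\in D_K\) then \(sw=(sd)z\in\mathscr{I}\), so \(s\) is a strong ascent or strong descent of \(w\) and never a weak one; the computation \(T_sb_w=T_sT_d\otimes b_z^0\) then directly yields \(b_{sw}\), plus the term \((q-q^{-1})b_w\) precisely when \(l(sd)<l(d)\). If \(sd\notin D_K\), then Lemma~\ref{deo1} gives \(sd=dt\) with \(t\in K\); a length count gives \(l(sw)-l(w)=l(tz)-l(z)\), the condition \(sw\in\mathscr{I}\) is equivalent to \(tz\in\mathscr{I}_0\), and the identity \(z^{-1}tz=w^{-1}sw\) transfers membership in \(J\). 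Hence \(t\) belongs to \(\SA(K,z)\), \(\SD(K,z)\), \(\WD_J(K,z)\) or \(\WA_J(K,z)\) according as \(s\) belongs to \(\SA(w)\), \(\SD(w)\), \(\WD_J(w)\) or \(\WA_J(w)\); sliding \(T_t\) across the tensor product and applying \eqref{S_0action} then yields exactly the required formula.

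The one point needing a little care is the weak-ascent case, where the above produces \(T_sb_w=qb_w-\sum_y r^t_{y,z}b_{dy}\) with \(r^t_{y,z}\in q\mathcal{A}^+\) and the sum over \(y\in\mathscr{I}_0\) with \(y<tz\). To match Definition~\ref{wgphdetelt}(i) one must check that each \(dy\) lies in \(\mathscr{I}\) and satisfies \(dy<sw\); the former is immediate from \(dy\in D_K\mathscr{I}_0\), and the latter follows because \(y\le tz\) in \(W_K\) together with \(d\in D_K\) gives \(dy\le dtz=sw\), by the subword characterisation of the Bruhat order (prepend a reduced word for \(d\) to reduced words for \(y\) and \(tz\)), or by Lemma~\ref{lifting1} and induction on \(l(d)\). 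I expect this case-and-subcase bookkeeping — matching \(s\)-types with \(t\)-types and keeping the summation ranges and the \(q\mathcal{A}^+\) condition correct across all four cases — to be the main obstacle, rather than any single deep step.

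Finally, for Definition~\ref{wgphdetelt}(ii) I would take the \(\mathcal{A}\)-semilinear map on \(\mathscr{S}\) determined by \(\overline{T_d\otimes b_z^0}=\overline{T_d}\otimes\overline{b_z^0}\) for \(d\in D_K\), \(z\in\mathscr{I}_0\). As in the arguments of Section~\ref{klparrel}, one first checks \(\overline{k\otimes\beta}=\overline{k}\otimes\overline{\beta}\) for all \(k\in\mathcal{H}\), \(\beta\in\mathscr{S}_0\) by writing \(k=T_dh\) with \(d\in D_K\), \(h\in\mathcal{H}_K\) and using that the involution on \(\mathscr{S}_0\) is \(\mathcal{H}_K\)-semilinear, and then \(\overline{h\alpha}=\overline{h}\,\overline{\alpha}\) for all \(h\in\mathcal{H}\), \(\alpha\in\mathscr{S}\) follows formally, while \(\overline{b_1}=\overline{T_1\otimes b_1^0}=T_1\otimes b_1^0=b_1\). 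Since all hypotheses of Definition~\ref{wgphdetelt} are then met, \(\mathscr{I}=D_K\mathscr{I}_0\) is a \(W\!\)-graph ideal with respect to \(J\); and as \(\mathscr{S}(\mathscr{I},J)\) is determined up to isomorphism by \(\mathscr{I}\) and \(J\), it is isomorphic to \(\mathcal{H}\otimes_{\mathcal{H}_K}\mathscr{S}_0\).
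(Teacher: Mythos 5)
Your proposal is correct and follows essentially the same route as the paper's own argument (the discussion preceding Theorem~\ref{indthr}): Lemma~\ref{invOrderIdeal} for the ideal property, the basis \(b_{dz}=T_d\otimes b_z^0\) coming from \(\mathcal{H}=\bigoplus_{d\in D_K}T_d\mathcal{H}_K\), the case analysis via Lemma~\ref{deo1} matching the type of \(s\) for \(w=dz\) with the type of \(t\) for \(z\) (including the check \(dy<sw\) via Lemma~\ref{lifting1} and induction on \(l(d)\)), and the semilinear involution defined by \(\overline{T_d\otimes b_z^0}=\overline{T_d}\otimes\overline{b_z^0}\). The only cosmetic differences are that you split first on whether \(sd\in D_K\) and that you make explicit the (correct) verification that \(\mathscr{I}\subseteq D_J\), which the paper leaves implicit.
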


\begin{rem}
In the situation of Theorem~\ref{indthr}, the assumption that
\(\mathscr{I}_0\) is a \(W_K\)-graph ideal in \((W_K,\le_L)\)
implies, by the construction in Section~\ref{section7}, that
\(\mathscr{S}_0\) is isomorphic to an \(\mathcal H_K\)-module
arising from a \(W_K\)-graph. By \cite[Theorem
5.1]{howyin:indwgraph} it follows that the induced
module~\(\mathscr S\) is isomorphic to a \(W\!\)-graph module.
Theorem~\ref{indthr} yields an alternative construction of the
induced~\(W\!\)-graph in this special case that the \(W_K\)-graph
in question comes from a \(W_K\)-graph ideal in \((W_K,\le_L)\).
\end{rem}

\end{document}